\documentclass[12pt,reqno]{amsart}
\usepackage{graphicx} 
\usepackage{float}
\usepackage{amsmath}
\usepackage{amssymb}
\usepackage{mathrsfs}
\usepackage{mathtools}
\usepackage{booktabs}
\usepackage{appendix}
\usepackage[a4paper,top=2.5cm,bottom=2.5cm,left=2cm,right=2cm,bindingoffset=0.5cm]{geometry}
\usepackage{hyperref}
\numberwithin{equation}{section}
\allowdisplaybreaks

\newcommand{\ud}{\,\mathrm{d}}
\newcommand{\udiv}{\, \mathrm{div}}

\newcommand{\R}{{\mathbb{R}}}

\title [Kinetic equation from the large-scale limit of Cucker--Smale model]{On the kinetic equation arising from the large-scale limit of the Cucker-Smale model}

\thanks{The work of Ruicheng Cheng and Zhenfu Wang was partially supported by the National Key R\&D Program of China (Project No.~2024YFA1015500) and by the NSFC (Grant Nos.~12595282 and 12171009). The work of Seung-Yeal Ha was supported by National Research Foundation (NRF) grant funded by the Korean government (MSIT) (RS-2025-00514472). }

\author[Cheng]{Ruicheng Cheng}
\address[Ruicheng Cheng]{\newline School of Mathematical Sciences, \newline
Peking University, Beijing, 100871, the People's Republic of China}
\email{chengruicheng02@stu.pku.edu.cn}

\author[Ha]{Seung-Yeal Ha}
\address[Seung-Yeal Ha]{\newline Department of Mathematical Sciences and Research Institute of Mathematics, \newline
Seoul National University, Seoul, 08826, Republic of Korea}
\email{syha@snu.ac.kr}

\author[Lee]{Jaemoon Lee}
\address[Jaemoon Lee]{\newline Department of Mathematical Sciences, \newline
Seoul National University, Seoul, 08826, Republic of Korea}
\email{dlwoans0001@snu.ac.kr}

\author[Wang]{Zhenfu Wang}
\address[Zhenfu Wang]{\newline Beijing International Center for Mathematical Research, \newline Peking University, Beijing, 100871, the People's Republic of China}
\email{zwang@bicmr.pku.edu.cn}

\begin{document}
\newtheorem{theorem}{Theorem}[section]
\newtheorem{lemma}[theorem]{Lemma}
\newtheorem{corollary}[theorem]{Corollary}
\newtheorem{proposition}[theorem]{Proposition}
\newtheorem{remark}[theorem]{Remark}
\newtheorem{definition}[theorem]{Definition}
\newtheorem{example}[theorem]{Example}

\begin{abstract}
     We propose a large-scale scaling viewpoint for deriving mesoscopic dynamics from interacting particle systems and  apply it to the Cucker--Smale flocking model. In contrast with the classical mean-field regime leading to the
   Vlasov-type Cucker--Smale equation with spatially nonlocal (convolution) alignment force, our scaling yields
   a kinetic equation whose alignment field becomes local in space and nonlocal only in velocity. 	For the spatially homogeneous case, we obtain an explicit solution and derive quantitative flocking rates.
   For the spatially inhomogeneous equation we establish a local well-posedness in $W^{1,\infty}$ and in
   $C_b^{1,\alpha}$, highlighting the additional difficulties caused by the absence of a convolution structure.
   Moreover, for sufficiently small interaction strength we present a global well-posedness and a forward-in-time
   $L^1$ asymptotic completeness property. Finally, we investigate mono-kinetic solutions and exhibit finite-time
   blow-up scenarios.
\end{abstract}

\maketitle


\tableofcontents

\section{Introduction}
\subsection{Motivation: A first-order system}
It is well established that deriving mesoscopic dynamics from microscopic dynamics involves specific rescaling of time, space, and interaction strength, each yielding distinct kinetic descriptions of the underlying microscopic dynamics. For classical Newtonian systems, for instance, standard mean-field scaling leads to the Vlasov equation~\cite{Dob79}. In contrast, the low-density scaling associated with hard-sphere collisions gives rise to the Boltzmann equation~\cite{PS17}, while weak-coupling scaling recovers the Landau equation~\cite{BPS13}. Broadly, these limits fall under the category of macroscopic scalings, wherein the system is observed over large space-time scales to investigate the collective statistical behaviors of interacting particles. \par 

To motivate the large scale limit considered in this paper,  we  start from  a simple 1-dimensional first-order $N$-particle system:
\[ \frac{\mathrm{d}q_i}{\mathrm{d}\tau}=\sum_{j = 1}^N \phi(|q_i-q_j|),\quad i \in [N] := \{ 1,\cdots,N \}, \]
where $q_i \in \mathbb{R}$  stands for the position of the $i$-th particle and $\phi$ is a compactly supported interaction function. If we rescale the above system by
$$x_i=\varepsilon q_i,\quad t=\varepsilon\tau$$
with the order $N\varepsilon=1$, then we get the rescaled system
\[
\frac{\mathrm{d}x_i}{\mathrm{d}t} = \sum_{j = 1}^N  \phi\left(\frac{|x_i-x_j|}{\varepsilon}\right),\quad i \in [N].
\]
Suppose that the interaction term is the indicator function $\phi(x)=\mathbf{1}_{|x|< 1}$.
Then the velocity of a particle equals the number of  particles within distance $\varepsilon$ of it:
\[
\dot{x}_i = \#\{j\in [N] \mid |x_i-x_j|<\varepsilon \},
\]
where $ \# A$ is the cardinality of the set $A$. If we introduce an empirical measure
\[
\mu_t^N = \frac{1}{N}\sum_{i=1}^N \delta_{x_i(t)},
\]
then the equation becomes
\[
\dot{x}_i = N\int_{x_i-\varepsilon}^{x_i+\varepsilon} \mathrm{d}\mu_t^N.
\]
Assume that the empirical measure $\mathrm{d}\mu_t^N$ converges to a density $f(t,x) \mathrm{d}x$ as $N\rightarrow\infty$. Then using the fact that $N=\varepsilon^{-1}$,
formally the above equation will converge to 
$$\dot{x} = 2f(t,x),$$
which we interpret it as the characteristic curve of the continuity equation solved  by $f$, i.e. 
$$\partial_t f  + \partial_x(2f^2)=0,$$
which is exactly the inviscid Burgers' equation in $\mathbb{R}$. Comparison with the usual mean-field limit, the above scaling leading to the (inviscid) Burgers equation is quite different from the standard mean-field regime
reviewed for instance in \cite{JabinWangActiveParticles2017,SznitmanSaintFlour1991}.
In the classical mean-field limit, one considers weak, long-range interactions of size $1/N$, for instance
\[
\dot x_i(t)=\frac1N\sum_{j=1}^N K\bigl(x_i(t)-x_j(t)\bigr)
\qquad\text{or}\qquad
\ud X_t^i=\frac1N\sum_{j=1}^N K\bigl(X_t^i-X_t^j\bigr)\, \ud t+\sqrt{2\nu} \, \ud B_t^i,
\]
where $\{B_t^i \}$ is the collection of i.i.d. Brownian motions. Note that the drift felt by a tagged particle is an averaged field generated by the empirical measure
$\mu_t^N=\frac1N\sum_{j=1}^N\delta_{x_j(t)}$. Formally (and rigorously under suitable assumptions) one obtains
a nonlocal McKean--Vlasov / Vlasov--Fokker--Planck type equation:
\[
\partial_t \rho+\udiv_x \!\bigl(\rho\,(K*\rho)\bigr)=\nu\,\Delta_x \rho,
\]
whose characteristic velocity involves the convolution $K*\rho$ rather than the pointwise density.
In contrast, our scaling is \emph{local} and \emph{moderately interacting}: the range $\varepsilon\to0$ while
$N\varepsilon = O (1)$, and the (un-normalized) counting interaction
$\phi=\mathbf{1}_{|x|<1}$ yields the convergence towards the inviscid Burgers equation, where shocks and entropy selection are intrinsic.
This short-range / moderate-interaction paradigm is closer in spirit to the program initiated by
Oelschl\"ager \cite{Oelschlager1985}, where the interaction range shrinks with $N$ and the limiting PDE may change
character compared with the classical mean-field closure.
Finally, Sznitman also discussed stochastic particle approximations of Burgers-type dynamics:
adding Brownian noise produces a viscous regularization and leads formally to the viscous Burgers equation:
\[
\partial_t f + \,\partial_x(2f^2)=\nu\,\partial_{xx}f,
\]
and a propagation-of-chaos derivation in this direction is developed in
\cite{SznitmanBurgers1986} (see also the Saint-Flour notes \cite{SznitmanSaintFlour1991}).
\subsection{A large scale limit}
In nature, birds~\cite{BCC+08}, fish~\cite{TKI+13}, fireflies~\cite{Buc68} are observed to exhibit flocking or synchronization phenomena/behavior. 
These observations have motivated a variety of mathematical models, including the Kuramoto model~\cite{Kur05,Kur84} for synchronization and self-propelled particle models such as the Vicsek model~\cite{VCB+95,VZ12} or the Cucker--Smale (CS) model~\cite{CS07} for collective flocking. In this paper,  we focus on the CS model.

The CS model is well known in the literature of flocking, which can be used to describe a wide class of collective behaviors. 
It can be written as  the following system of ordinary differential equations:
\begin{equation}\label{CS}
    \begin{cases}
   \displaystyle  \dfrac{\mathrm{d}q_i}{\mathrm{d}\tau}=v_i, \quad \tau > 0, \quad i \in [N], \\
    \displaystyle\frac{\mathrm{d}v_i}{\mathrm{d}\tau}=k\sum_{j=1}^{N}\psi\left(|q_j-q_i|\right)(v_j-v_i),
\end{cases}
\end{equation}
where $q_i,v_i\in\mathbb{R}^d$ denote the position and velocity of the $i$-th particle respectively, $k$ is a nonnegative coupling strength and $\psi$ is a communication weight function that is nonnegative, bounded, Lipschitz continuous and non-increasing.\par
There have been many extensions of the CS model, including the random failures of connection~\cite{HM19,RLX15}, modified communication protocols (e.g.\ normalization)~\cite{MT11}, adding stochastic perturbations~\cite{AH10,HLL09}, and more.
In particular, finding a limiting description of CS-type models in large-population regimes is an active research subject~\cite{HKZ18,HL09}.
In particular, the derivation and well-posedness of the mean-field limit of the CS model are well documented in literature~\cite{HL09,HT08}.\par

In the classical mean-field regime for the CS model, namely with coupling strength $k=\lambda/N$ for some $\lambda>0$,  one formally obtains the Vlasov-type equation:
\begin{equation}\label{eq:kineticCS}
\partial_t f +v\cdot\nabla_x f=\lambda\nabla_v\cdot\left(\tilde{E}[f]f\right),
\end{equation}
where the alignment force field $\tilde{E}[f]$ is given by 
$$\tilde{E}[f](t,x,v)=\int_{\mathbb{R}^{2d}}\psi(|x-x_*|)(v-v_*)f(t,x_*,v_*)\mathrm{d}x_*\mathrm{d}v_*.$$
Under suitable assumptions on the communication weight $\psi$, it is by now standard to rigorously justify the mean-field limit of the particle system~\eqref{CS} towards the kinetic equation~\eqref{eq:kineticCS};
see, for instance, Ha-Liu \cite{HL09} and Ha-Kim-Zhang \cite{HKZ18} for finite-time and uniform-in-time mean field using particle-in-cell method, Carrillo--Choi--Hauray--Salem \cite{CarrilloChoiHauraySalem2019} for a general framework
covering Cucker--Smale interactions with discontinuous (sharp) sensitivity regions, and Mucha--Peszek \cite{MuchaPeszek2018} for the kinetic Cucker--Smale equation with singular communication weights, etc.

In this paper, by rescaling the CS model \eqref{CS}, we derive a different kinetic equation with a spatially-local alignment interaction:
\begin{equation}\label{Eq}
\partial_t f +v\cdot\nabla_x f=\gamma\nabla_v\cdot\left(E[f]f\right),
\end{equation}
where $\gamma>0$ is a constant and
$$E[f](t,x,v)=\int_{\mathbb{R}^d}(v-v_*)f(t,x,v_*)\mathrm{d}v_*.$$ 
We would  refer to \eqref{Eq} as the \emph{kinetic CS equation in the moderate-interaction (shrinking-radius) regime},
to distinguish it from the classical mean-field (Vlasov--alignment) equation \eqref{eq:kineticCS}. 

First, we derive the equation~\eqref{Eq} heuristically using the method of scaling. If we rescale equation \eqref{CS} by
$$x_i=\varepsilon q_i,\qquad t=\varepsilon\tau,\qquad k=\varepsilon\kappa$$
with system size $N\varepsilon^d=1$, we get the rescaled CS model:
\begin{equation}\label{RCS}
	\begin{cases}
		\displaystyle \dfrac{\mathrm{d}x_i}{\mathrm{d}t}=v_i, \quad t > 0,~~i \in [N], \\
		\displaystyle\frac{\mathrm{d}v_i}{\mathrm{d}t}=\kappa\sum_{j=1}^{N}\psi\Big(\frac{|x_j-x_i|}{\varepsilon}\Big)(v_j-v_i),
	\end{cases}
\end{equation}
Assume that the coupling function $\psi$ is  compactly supported in $B(0, R)$ (the ball with a radius $R$, a center 0).  We now estimate the acceleration of the $i-$th particle: 
\[
\kappa\!\sum_{|x_j-x_i|<\varepsilon R}  \psi\Big(\frac{|x_j-x_i|}{\varepsilon}\Big) (v_j-v_i)=\kappa N\!\int_{B(x_i,\varepsilon R )\times \mathbb{R}^d} \psi\Big(\frac{|y-x_i|}{\varepsilon}\Big)  (w-v_i)\ \mu_t^N(\mathrm{d} y ,\mathrm{d} w),\]
where $\displaystyle \mu_t^N = \frac{1}{N}\sum_{i=1}^N \delta_{(x_i(t),v_i(t))}$. Assume that 
\[ \mu_t^N(\mathrm{d} y,\mathrm{d} w) \quad \mbox{converges to} \quad f(t,y,w) \mathrm{d} y \mathrm{d} w \quad \mbox{as $N\rightarrow\infty$}. \]
 When the limit density  $f$ is Lipschitz in $y$,  one can replace $f(t, y, w)$ by its value at the point $y=x_i$, i.e. $f(t,x_i,w)$,  for $y \in B(x_i, \varepsilon R)$. Then the right-hand side of the above equation can be approximated by 
\[
\kappa N\!\int_{B(x_i,\varepsilon R )\times \mathbb{R}^d} \psi\Big(\frac{|y-x_i|}{\varepsilon}\Big)  (w-v_i)\ f(t, x_i, w) \ud y \ud w.   
\]
By Fubini's theorem, we integrate first with respect to $y$ to obtain that 
\[
\frac{\ud v_i }{\ud t } \approx \kappa N\!\int_{ \mathbb{R}^d}  (w-v_i)\ f(t, x_i, w)  \Big\{    \int_{B(x_i,\varepsilon R )}  \psi\Big(\frac{|y-x_i|}{\varepsilon}\Big) \ud y  \Big\} \ud w. 
\]
By direct computations of the integration on the ball $B(x_i, \varepsilon R )$,  and using the scaling  $N \varepsilon^d = 1$, we obtain that 
\[
\kappa N   \int_{B(x_i,\varepsilon R )}  \psi\Big(\frac{|y-x_i|}{\varepsilon}\Big) \ud y = \kappa N \varepsilon^d \int_{\mathbb{R}^d} \psi(|z| ) \ud z  = \kappa  \int_{\mathbb{R}^d} \psi(|z| ) \ud z =: \gamma. 
\]
In summary,  the evolution of the tagged  particle  $i$ formally converges to 
\[
\frac{\ud  x }{\ud t }  = v, \quad \quad  \frac{\mathrm{d}v}{\mathrm{d}t}= \gamma \int_{\mathbb{R}^d} (w -v)f(t, x, w)\mathrm{d} w,
\]
which is exactly the characteristic curve of  our limit kinetic equation~\eqref{Eq}. We can also derive~\eqref{Eq} in a more standard way by using the BBGKY hierarchy together with a molecular chaos assumption. We postpone this discussion in Appendix \ref{App-A}.

In this paper, we are interested in the analysis of the kinetic equation \eqref{Eq}, including an explicit solution for the spatially homogeneous equation, local and global well-posedness results for the spatially inhomogeneous equation as well as the property of asymptotic completeness. In the last section, we also study the mono-kinetic solutions, where we show such solutions can blow up in finite time. We leave a rigorous justification of this scaling limit in our paper to future work. The equation \eqref{Eq} is similar to the one in~\cite{BCP97}, where the integrand in the non-linear part differs by a factor of $|v-v_*|$.
The difference in derivation is that they start from some collision model, whereas we start from the rescaled CS model. Our proofs of well-posedness results in this paper apply to that equation with small modification. Moreover, we extend their one-dimensional results to  any $d$-dimensional case.

\begin{remark}\textup{
	By choosing different scalings, one can obtain kinetic equations with different (and sometimes richer) interaction
	structures. For instance, in the \emph{weak-coupling} regime for a Newtonian $N$-particle system with a smooth
	short-range potential, one expects the (inhomogeneous) kinetic Landau equation to emerge.
	Following Bobylev--Pulvirenti--Saffirio~\cite{BPS13}, we consider
	\[
	\frac{d q_i}{d\tau}=v_i,\qquad
	\frac{d v_i}{d\tau}=\sum_{j\neq i} F_\varepsilon(q_i-q_j),\qquad i \in [N],
	\]
	introduce $\varepsilon\ll1$, rescale $x_i=\varepsilon q_i$ and $t=\varepsilon\tau$, assume unit macroscopic density
	$N \varepsilon^{d} = 1$, and weaken the interaction by $\phi_\varepsilon=\sqrt{\varepsilon}\,\phi$ (so
	$F_\varepsilon=\sqrt{\varepsilon}\,F$ with $F=-\nabla\phi$). In the macroscopic variables, this becomes
	\[
	\dot v_i(t)=\frac{1}{\sqrt{\varepsilon}}\sum_{j\neq i}F\!\left(\frac{x_i(t)-x_j(t)}{\varepsilon}\right),
	\]
	so that many small deflections accumulate into an effective diffusion in velocity, yielding formally a Landau-type
	collision operator; see \cite{BPS13} for a consistency result.}
\end{remark}	 
	
\vspace{0.2cm}

The rest of this paper is organized as follows. 
In Section \ref{sec:2}, we establish a priori estimates and gives an explicit formula in the spatially homogeneous case.
In Section \ref{sec:3}, we study local well-posedness in $W^{1,\infty}$ and $C_b^{1,\alpha}$, global well-posedness for small coupling, forward asymptotic completeness, and mono-kinetic solutions. 
In Section \ref{sec:4}, we provide several numerical evidence on conservation and dissipation trend. 
Also, we check the effect of different coupling strength.
Finally, Section \ref{sec:5} is devoted to a brief summary of the paper and discussion on some issues for a future work. 
In Appendix \ref{App-A}, we give a formal BBGKY derivation of the kinetic model.

\subsection{Gallery of Notations}
Throughout the paper, $|\cdot|$ denotes the Euclidean norm in the Euclidean space $\mathbb{R}^n$ of any dimension $n$. For a matrix $A\in\mathrm{M}_{n\times n}$, $|A|$ denotes the operator norm
$$|A|\triangleq\sup_{\substack{x\in\mathbb{R}^n\\|x|=1}}|Ax|.$$
We also write $z=(x,v)\in\mathbb{R}^d\times\mathbb{R}^d$, where $x$ and $v$ stand for the position and velocity variables respectively.\\[4 pt]
\textbf{$v$-support}\quad
For a function $f= f(x,v)$ on $\mathbb{R}^{2d}$, the \textit{$v$-support} of $f$ is defined as
$$v\textup{-}\mathrm{supp}(f)\triangleq\left\{v\in\mathbb{R}^d| f(x,v)\neq 0 \textup{ for some } x\in\mathbb{R}^d\right\}.$$
\textbf{The space $W^{1,\infty}$}\quad
Let  $W^{1,\infty}(\mathbb{R}^{2d})$ denote the Sobolev space consisting of all essentially bounded functions on $\mathbb{R}^{2d}$ with essentially bounded weak derivatives. It is well-known that every $W^{1,\infty}$ function has a Lipschitz continuous version, and for $f\in W^{1,\infty}$, $\Vert\nabla f\Vert_{L^{\infty}}$ is comparable with the Lipschitz constant of $f$. In this paper, when we write a function $f\in W^{1,\infty}(\mathbb{R}^{2d})$, we always refer to the Lipschitz continuous version of $f$, and for $f\in W^{1,\infty}(\mathbb{R}^{2d})$, we define
\[ \mathrm{Lip}(f)\triangleq\sup_{\substack{z_1,z_2\in\mathbb{R}^{2d}\\z_1\neq z_2}}\frac{|f(z_1)-f(z_2)|}{|z_1-z_2|}, \quad  \Vert f\Vert_{W^{1,\infty}}\triangleq\Vert f\Vert_{L^{\infty}}+\mathrm{Lip}(f). \]
\textbf{The spaces $C^{1,\alpha}$ and $C_b^{1,\alpha}$}\quad
For $\alpha\in(0,1]$, let $C^{1,\alpha}(\mathbb{R}^{2d})$ denote the H\"{o}lder space consisting of all $C^1$ functions on $\mathbb{R}^{2d}$ whose first derivatives are $\alpha$-H\"{o}lder continuous, and $C_b^{1,\alpha}(\mathbb{R}^{2d})$ denotes the collection of those functions in $C^{1,\alpha}(\mathbb{R}^{2d})$ that both itself and its first derivatives are bounded. For $f\in C_b^{1,\alpha}(\mathbb{R}^{2d})$, we denote
\[ [\nabla_zf]_{\alpha}\triangleq\sup_{\substack{z_1,z_2\in\mathbb{R}^{2d}\\z_1\neq z_2}}\frac{|\nabla_zf(z_1)-\nabla_zf(z_2)|}{|z_1-z_2|^{\alpha}}, \quad \Vert f\Vert_{C^{1,\alpha}}\triangleq\sup_{\mathbb{R}^{2d}}|f(z)|+\sup_{\mathbb{R}^{2d}}|\nabla_zf(z)|+[\nabla_zf]_{\alpha}. \]
\textbf{The generic constant $C$}\quad
Throughout the paper, the letter $C$ denotes a generic positive constant whose value may vary from line to line. Its dependence on initial data or parameters will be specified in the statements when needed.

\section{Preliminaries} \label{sec:2}
We begin with key physical observables of the kinetic equation.
Specifically, we track mass, momentum, energy, and entropy and $L^p$ norm, which  provide a macroscopic perspective on the system’s overall behavior.
Then these results are applied to the spatially homogeneous version of \eqref{Eq} to get an explicit solution formula.
\subsection{A priori estimates} \label{sec:2.1}
In this subsection, we provide a priori conservation and monotonicity identities for solutions to~\eqref{Eq}, obtained without using any explicit representation of the solution. First, we set $H$-functional:
\[ H[f]:= \int_{\R^{2d}} f\log f\,\ud x \ud v. \]
\begin{proposition}\label{evolution}
	Let $f= f(t,x,v)\ge 0$ be a smooth solution to~\eqref{Eq} with a sufficient fast decay as $|x|+|v|\to\infty$
	so that all integrations by parts below are justified. Then, we have the following estimates:
	\begin{enumerate}
		\item[\textup{(1)}] \textup{(Conservation laws)}:
		\[
		\frac{\ud}{\ud t}\int_{\R^{2d}} f(t,x,v)\,\ud x \ud v=0, \quad \frac{\ud}{\ud t}\int_{\R^{2d}} v\,f(t,x,v)\,\ud x \ud v=0.
		\]
		\item[\textup{(2)}] \textup{(Dissipation estimates)}:
		\[
		\frac{\ud}{\ud t}\int_{\R^{2d}} |v|^2\,f(t,x,v)\,\ud x \ud v\le 0, \quad \frac{\ud}{\ud t}H[f(t)]\ge 0, \quad \frac{\ud}{\ud t}\int_{\R^{2d}}|f(t,x,v)|^p\,\ud x \ud v\ge 0.
		\]
		\end{enumerate}
\end{proposition}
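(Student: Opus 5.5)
The plan is to test the equation against suitable functions $\varphi(x,v)$ or $\Phi(f)$, integrate by parts in $x$ and $v$, and exploit that $E[f]$ is local in $x$. Fix $x$ and introduce the local moments
\[
\rho(t,x)=\int_{\R^d} f\,\ud v,\qquad (\rho u)(t,x)=\int_{\R^d} v f\,\ud v,\qquad e(t,x)=\int_{\R^d}|v|^2 f\,\ud v.
\]
Then $E[f](t,x,v)=\rho v-\rho u$, which is affine in $v$, and $\nabla_v\cdot E[f]=d\,\rho(t,x)$. The transport term $v\cdot\nabla_x f$ integrates to zero against any function of $v$ alone, and also against $\Phi(f)$, since $v\cdot\nabla_x\Phi(f)=\nabla_x\cdot(v\Phi(f))$. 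So in every case only the alignment term needs to be handled.

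\textbf{Mass and momentum.} For $\varphi=1$ the alignment term is a pure $v$-divergence, so its integral vanishes. For $\varphi=v$, one integration by parts gives
\[
-\gamma\int_{\R^{2d}} E[f]f\,\ud x\ud v=-\gamma\int_{\R^{2d}\times\R^d}(v-v_*)f(x,v)f(x,v_*)\,\ud v_*\ud v\ud x=0.
\]
This vanishes by antisymmetry under $v\leftrightarrow v_*$, which is legitimate because both factors are evaluated at the same $x$.

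\textbf{Energy.} For $\varphi=|v|^2$, the same symmetrization gives
\[
\frac{\ud}{\ud t}\int|v|^2 f=-2\gamma\int v\cdot(v-v_*)ff_*=-\gamma\int_{\R^d}\int_{\R^{2d}}|v-v_*|^2f(x,v)f(x,v_*)\,\ud v\ud v_*\ud x\le0.
\]

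\textbf{Entropy and $L^p$ norms.} For $\Phi(f)$ with $\Phi$ convex, integrating by parts in $v$ yields
\[
\frac{\ud}{\ud t}\int\Phi(f)=\gamma\int\Phi'(f)\nabla_v\cdot(E f)=-\gamma\int\Phi''(f)f\nabla_v f\cdot E=\gamma\int G(f)\,\nabla_v\cdot E .
\]
Here $G'(s)=s\Phi''(s)$ and $G(0)=0$, so $G(s)=s\Phi'(s)-\Phi(s)+\Phi(0)$. Substituting $\nabla_v\cdot E=d\rho\ge0$ gives $\frac{\ud}{\ud t}\int\Phi(f)=\gamma d\int\rho(t,x)G(f)\,\ud x\ud v$.

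For $\Phi(s)=s\log s$ we get $G(s)=s$, hence
\[
\frac{\ud}{\ud t}H[f]=\gamma d\int\rho^2\,\ud x\ge0 .
\]
For $\Phi(s)=s^p$ we get $G(s)=(p-1)s^p$, which is nonnegative for $p\ge1$. Both monotonicity statements follow from the sign of $\nabla_v\cdot E$. This reflects the compression in velocity produced by the alignment.

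\textbf{Main obstacle.} The delicate point is justifying the entropy computation: $f\log f$ and $\Phi''(f)$ are singular where $f=0$. I would either regularize with $\Phi_\delta(s)=(s+\delta)\log(s+\delta)$ and let $\delta\to0$, or simply invoke the stated decay and smoothness assumptions. The same assumptions control boundary terms such as $|v|\,|E|\,f\sim|v|^2 f$. I would also note that $p\ge1$ is implicitly required for the $L^p$ statement.
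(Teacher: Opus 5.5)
Your proposal is correct and follows essentially the same route as the paper. The paper also uses the test-function identity with $g=1,\ v,\ |v|^2$, antisymmetry or symmetrization in $(v,v_*)$, and an integration by parts that turns the entropy and $L^p$ derivatives into integrals against $\nabla_v\cdot E[f]=d\rho$. Your unified convex-$\Phi$ formula with $G(s)=s\Phi'(s)-\Phi(s)+\Phi(0)$ packages the paper's two separate computations into one. It yields the same identities, $\frac{\ud}{\ud t}H[f]=\gamma d\int\rho^2\,\ud x$ and $\frac{\ud}{\ud t}\int f^p=\gamma d(p-1)\int\rho f^p\,\ud x\ud v$.
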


\begin{proof}
	Let $g=g(v)$ be a smooth test function. Then, we use the sufficient decay of $f$ at infinity and an integration by parts in $x$ and $v$ to obtain
	\begin{align}
	\begin{aligned} \label{B-1}
	& \frac{\ud}{\ud t}\int_{\R^{2d}} g(v)\,f(t,x,v)\,\ud x\ud v \\
	&\hspace{0.5cm}=\int_{\R^{2d}} g(v)\,\partial_t f(t,x,v)\,\ud x\ud v =\int_{\R^{2d}} g(v)\Bigl[-v\cdot\nabla_x f+\gamma\,\nabla_v\cdot\bigl(E[f]\,f\bigr)\Bigr]\,\ud x\ud v\\
	&\hspace{0.5cm}=-\gamma\int_{\R^{2d}} \nabla_v g(v)\cdot E[f](t,x,v)\,f(t,x,v)\,\ud x\ud v.
	\end{aligned}
	\end{align}
In the sequel, we choose $g$ appropriately to derive desired estimates. \newline

\noindent\textup{(1)} We take $1, v$ for $g$, respectively in \eqref{B-1} to find 
$$\frac{\ud}{\ud t}\int_{\R^{2d}} f(t,x,v)\,\ud x\ud v=0,$$
and
\begin{align*}
 \frac{\ud}{\ud t}\int_{\R^{2d}} v\,f(t,x,v)\,\ud x\ud v &=-\gamma\int_{\R^{2d}} E[f](t,x,v)\,f(t,x,v)\,\ud x\ud v \\
&=-\gamma\int_{\R^{3d}}\bigl(v-v_*\bigr)\,f(t,x,v)\,f(t,x,v_*)\,\ud x\ud v\ud v_*=0,
\end{align*}
where the last identity follows by interchanging $(v,v_*)$ and using anti-symmetry of $(v-v_*)$. \newline
	
\noindent\textup{(2)} Again, we take $g(v)=|v|^2$ to see $\nabla_v g=2v$, and substitute this into \eqref{B-1} to get 
	\begin{align*}
		 \frac{\ud}{\ud t}\int_{\R^{2d}} |v|^2\,f(t,x,v)\,\ud x\ud v & =-2\gamma\int_{\R^{2d}} v\cdot E[f](t,x,v)\,f(t,x,v)\,\ud x\ud v\\
		&=-2\gamma\int_{\R^{3d}} v\cdot (v-v_*)\,f(t,x,v)\,f(t,x,v_*)\,\ud x\ud v\ud v_*\\
		&=-\gamma\int_{\R^{3d}} |v-v_*|^2\,f(t,x,v)\,f(t,x,v_*)\,\ud x\ud v\ud v_* \le 0,
	\end{align*}
	where we use the identity $2\,v\cdot(v-v_*)=|v-v_*|^2+|v|^2-|v_*|^2$ and symmetry in $(v,v_*)$ in the last line.  For the second estimate, we use handy notations
\[ f=f(t,x,v) \quad \mbox{and} \quad f_*=f(t,x,v_*), \]
and use $\partial_t(f\log f)=(1+\log f)\partial_t f$ and integration by parts to get 
	\begin{align*}
		\frac{\ud}{\ud t}H[f(t)]
		&=\int_{\R^{2d}} (1+\log f)\,\partial_t f\,\ud x\ud v\\
		&=\int_{\R^{2d}} (1+\log f)\Bigl[-v\cdot\nabla_x f+\gamma\,\nabla_v\cdot\bigl(E[f]\,f\bigr)\Bigr]\,\ud x\ud v\\
		&=-\int_{\R^{2d}} v\cdot\nabla_x(f\log f)\,\ud x\ud v
		-\gamma\int_{\R^{2d}} \nabla_v f\cdot E[f]\,\ud x\ud v\\
		&=-\gamma\int_{\R^{3d}} \nabla_v f\cdot (v-v_*)\,f_*\,\ud x\ud v\ud v_*.
	\end{align*}
Now, we use $\nabla_v\cdot\bigl((v-v_*)f_*\bigr)=d\,f_*$ and the integration by parts in $v$ to find 
	\[
	\frac{\ud}{\ud t}H[f(t)]
	=\gamma\int_{\R^{3d}} f\,\nabla_v\cdot\bigl((v-v_*)f_* \bigr)\,\ud x\ud v\ud v_*
	=\gamma d\int_{\R^{3d}} f\,f_*\,\ud x\ud v\ud v_* \ge 0.
	\]
Next, we return to the last estimate. For $p\ge 1$, we use $\partial_t(f^p)=p f^{p-1}\partial_t f$ and  the integration by parts to find 
	\begin{align*}
		\frac{\ud}{\ud t}\int_{\R^{2d}} f(t,x,v)^p\,\ud x\ud v  &=p\int_{\R^{2d}} f^{p-1}\partial_t f\,\ud x\ud v\\
		&=-\int_{\R^{2d}} v\cdot\nabla_x(f^p)\,\ud x\ud v
		-\gamma p(p-1)\int_{\R^{2d}} f^{p-1}\nabla_v f\cdot E[f]\,\ud x\ud v\\
		&=-\gamma(p-1)\int_{\R^{2d}} \nabla_v(f^p)\cdot E[f]\,\ud x\ud v\\
		&=\gamma(p-1)\int_{\R^{2d}} f^p\,\nabla_v\cdot E[f]\,\ud x\ud v.
	\end{align*}
	Now, we use $E[f](t,x,v)=\int_{\R^d}(v-v_*)f(t,x,v_*)\,\ud v_*$ to get 
	\[  \nabla_v\cdot E[f]=\int_{\R^d}\nabla_v\cdot\bigl((v-v_*)f_*\bigr)\,\ud v_*
	=d\int_{\R^d} f_*\,\ud v_*. \]
	This yields the desired estimate:
	\[
	\frac{\ud}{\ud t}\int_{\R^{2d}} f^p\,\ud x\ud v
	=\gamma d(p-1)\int_{\R^{3d}} f(t,x,v)^p\,f(t,x,v_*)\,\ud x\ud v\ud v_* \ge 0.
	\]
\end{proof}

\begin{remark}[Entropy sign and comparison]\label{rem:entropy-sign}\textup{
	Proposition~\ref{evolution}(2) shows that
	\[
	H[f]:=\int_{\R^{2d}} f\log f\,\ud x\ud v
	\]
	is nondecreasing along smooth solutions of~\eqref{Eq}. Note that many authors instead call
	$S[f]:=-\int f\log f\,\ud x\ud v=-H[f]$ the (Boltzmann) entropy; in that convention our result reads that the
	physical entropy $S[f(t)]$ is nonincreasing. This is consistent with the order-forming nature of alignment:
	the Cucker--Smale dynamics reduces velocity fluctuations and promotes coherent motion. In contrast,
	$H[f]$ is formally conserved for the collisionless Vlasov--Poisson equation, while it is typically
	nonincreasing (equivalently, $S$ nondecreasing) for dissipative kinetic models such as
	Vlasov--Fokker--Planck and the Boltzmann/Landau equations.}
\end{remark}

\subsection{Spatially homogeneous equation}\label{sec:2.2}
In this subsection, we consider the spatially homogeneous setting, namely
\[
f(t,x,v)=f(t,v),
\]
so that there is no dependence on the spatial variable $x$. In this case, the equation \eqref{Eq} reduces to
\begin{equation}\label{shEq}
\partial_t f=\gamma\,\nabla_v\cdot\bigl(E_0[f]\,f\bigr), \quad t > 0,~~v \in {\mathbb R}^d, 
\end{equation}
where
\[
E_0[f](t,v)=\int_{\R^d}(v-v_*)\,f(t,v_*) \ud v_*.
\]
Moreover, the analogues of Proposition~\ref{evolution} remain valid for~\eqref{shEq}.
In particular, we use the conservation of mass and momentum to rewrite~\eqref{shEq} as a linear transport equation,
from which an explicit formula for the solution follows.

\begin{theorem}\label{shso}
	Let $f_0\in C^1(\R^d)$ be nonnegative, with finite mass and momentum:
	\[
	m:=\int_{\R^d} f_0(v)\ud v, \quad 
	p:=\int_{\R^d} v\,f_0(v) \ud v, \quad \overline v:=p/m.
	\]
	Then the solution to~\eqref{shEq} with initial datum $f(0,v)=f_0(v)$ is given by
	\begin{equation}\label{solution}
	    f(t,v)=\mathrm{e}^{\gamma mdt}f_0\left(\overline{v}+\mathrm{e}^{\gamma mt}(v-\overline{v})\right).
	\end{equation}
\end{theorem}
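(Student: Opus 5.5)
The plan is to use the conservation laws of Proposition~\ref{evolution}(1), in their spatially homogeneous form, to turn \eqref{shEq} into a linear transport equation, and then solve that equation by characteristics.

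\textbf{Reduction to a linear equation.} For a solution $f$ with finite mass and momentum, the spatially homogeneous analogue of Proposition~\ref{evolution}(1) gives $\int f(t,v_*)\ud v_* = m$ and $\int v_* f(t,v_*)\ud v_* = p$ for all $t \ge 0$. Therefore
\[
E_0[f](t,v) = m v - p = m(v-\overline v).
\]
Substituting this into \eqref{shEq} gives the linear equation
\[
\partial_t f = \gamma m\,\nabla_v\cdot\bigl((v-\overline v) f\bigr) = \gamma m d\, f + \gamma m (v-\overline v)\cdot\nabla_v f .
\]

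\textbf{Solving by characteristics.} This equation can be written as
\[
\partial_t f - \gamma m (v-\overline v)\cdot \nabla_v f = \gamma m d\, f .
\]
Along the curves $\dot V = -\gamma m (V-\overline v)$, that is $V(t)-\overline v = e^{-\gamma m t}(V(0)-\overline v)$, the solution satisfies $\frac{\ud}{\ud t} f(t,V(t)) = \gamma m d\, f$. Integrating, $f(t,V(t)) = e^{\gamma m d t} f_0(V(0))$. Inverting the flow gives $V(0) = \overline v + e^{\gamma m t}(v-\overline v)$, which is exactly \eqref{solution}.

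\textbf{Consistency check.} To close the argument, I will also verify directly that \eqref{solution} solves \eqref{shEq}, which avoids any circularity in using the conservation laws. The change of variables $w = \overline v + e^{\gamma m t}(v-\overline v)$ has Jacobian $e^{-\gamma m d t}$. It shows that the function in \eqref{solution} has mass $m$ and momentum $p$ for every $t$, so its $E_0$ equals $m(v-\overline v)$. A direct differentiation then shows it satisfies the linear equation, and hence \eqref{shEq}.

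\textbf{Main obstacle: uniqueness.} The phrase ``the solution'' requires uniqueness among $C^1$ solutions with finite mass and momentum. Any such solution has conserved $m$ and $p$, granted enough decay to justify Proposition~\ref{evolution}. It therefore solves the same linear transport equation, whose solution is unique by the method of characteristics. The justification of the integrations by parts under only finite-moment assumptions is the delicate point; I would handle it by stating the result within the class where Proposition~\ref{evolution} applies.
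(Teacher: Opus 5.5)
Your proposal is correct and follows the paper's proof essentially step for step: conservation of mass and momentum gives $E_0[f]=m(v-\overline v)$, which reduces \eqref{shEq} to the linear equation $\partial_t f=\gamma m(v-\overline v)\cdot\nabla_v f+\gamma m d f$, and you then integrate along $\dot V=-\gamma m(V-\overline v)$. Your added direct check, that \eqref{solution} has mass $m$ and momentum $p$ and satisfies \eqref{shEq}, is a worthwhile addition, as is your uniqueness remark; the paper leaves both implicit, since its argument only shows that a solution with conserved moments must take this form.
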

\begin{proof} 
By the same calculation with Proposition~\ref{evolution}, we can show that the mass and momentum are also preserved by the flow \eqref{shEq}:
\[ \frac{\mathrm{d}}{\mathrm{d}t}\left(\int_{\mathbb{R}^d}f(t,v)\mathrm{d}v\right)=0, \quad \frac{\mathrm{d}}{\mathrm{d}t}\left(\int_{\mathbb{R}^d}vf(t,v)\mathrm{d}v\right)=-\gamma\int_{\mathbb{R}^{2d}}(v-v_*)f(t,v)f(t,v_*)\mathrm{d}v\mathrm{d}v_*=0. \]
Thus, we have
$$\int_{\mathbb{R}^d}f(t,v)\mathrm{d}v=m,\quad\int_{\mathbb{R}^d}vf(t,v)\mathrm{d}v=p,\quad\forall~t\geq 0.$$
Then we have
$$E_0[f](t,v)=mv-p=m(v-\overline{v}),$$
and the equation \eqref{shEq} can be reduced  to
\begin{equation}\label{degenerate}
    \partial_t f=\gamma m(v-\overline{v})\cdot\nabla_v f+\gamma mdf.
\end{equation}
The characteristic curve of \eqref{degenerate} is given by
$$\frac{\mathrm{d}V}{\mathrm{d}t}=-\gamma m(V-\overline{v}),$$
whose solution with initial data $V(0)=v_0$ is
$$V(t)=\overline{v}+\mathrm{e}^{-\gamma mt}(v_0-\overline{v}).$$
Along the characteristic curve, we have
$$\frac{\mathrm{d}}{\mathrm{d}t}f(t,V(t))=\gamma mdf(t,V(t)).$$
This yields
$$f(t,V(t))=\mathrm{e}^{\gamma mdt}f(0,V(0))=\mathrm{e}^{\gamma mdt}f_0(v_0).$$
Therefore, the solution is given by the following explicit formula:
$$f(t,v)=\mathrm{e}^{\gamma mdt}f_0\left(\overline{v}+\mathrm{e}^{\gamma mt}(v-\overline{v})\right).$$
\end{proof}

The explicit formula in Theorem~\ref{shso} yields quantitative flocking estimates and precise growth/decay rates
for several basic functionals of the spatially homogeneous solution. Below, we set 
		\[
		R(t)=\sup\left\{|v-\overline{v}|\Big|~f(t,v)\neq 0\right\}.
		\]

\begin{corollary}\label{shcb}
	Under the same assumptions as in Theorem~\ref{shso}, the following assertions hold.
	\begin{enumerate}
		\item[\textup{(1)}]The $L^{\infty}$-norm of $f(t)$ grows exponentially fast:
		\[
		\Vert f(t)\Vert_{L^{\infty}}=\mathrm{e}^{\gamma mdt}\Vert f_0\Vert_{L^{\infty}}.
		\]
		\item[\textup{(2)}] The support of $f(t)$ shrinks exponentially fast towards  the singleton set $ \{ \overline{v} \}$:
		\[
		R(t)=\mathrm{e}^{-\gamma mt}R(0),
		\]
		\item[\textup{(3)}]The kinetic energy of the system converges exponentially fast to $m|\overline{v}|^2$:
		\[
		\int_{\mathbb{R}^d}|v|^2f(t,v)\ud v
		=m|\overline{v}|^2+\mathrm{e}^{-2\gamma mt}\int_{\mathbb{R}^d}|v-\overline{v}|^2f_0(v)\ud v.
		\]
		\item[\textup{(4)}]The entropy grows linearly:
		\[
		H[f(t)]=H[f_0]+\gamma mdt\int_{\mathbb{R}^d} f_0(v)\ud v, \quad t \geq 0.
		\]
	\end{enumerate}
\end{corollary}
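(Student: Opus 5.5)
All four assertions will be read off directly from the explicit formula \eqref{solution} of Theorem~\ref{shso}. The main tool is the affine change of variables
\[
w=\overline v+\mathrm{e}^{\gamma mt}(v-\overline v),\qquad \ud w=\mathrm{e}^{\gamma mdt}\ud v,
\]
which is a bijection of $\R^d$. Its Jacobian exactly cancels the prefactor $\mathrm{e}^{\gamma mdt}$ in integrals of the form $\int F(v)f(t,v)\ud v$.

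For (1), the map $v\mapsto w$ is onto, so $\sup_v f_0(w(v))=\Vert f_0\Vert_{L^\infty}$. Multiplying by $\mathrm{e}^{\gamma mdt}$ gives the identity; here $f_0\in C^1$, so the supremum is a genuine supremum.

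For (2), $f(t,v)\neq0$ holds if and only if $f_0(w)\neq0$. Since $w-\overline v=\mathrm{e}^{\gamma mt}(v-\overline v)$, we get
\[
\{|v-\overline v| : f(t,v)\ne0\}=\mathrm{e}^{-\gamma mt}\{|w-\overline v| : f_0(w)\ne0\}.
\]
Taking suprema gives $R(t)=\mathrm{e}^{-\gamma mt}R(0)$. This holds with the convention $R=\infty$ if the support is unbounded, in which case both sides are infinite.

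For (3), write $|v|^2=|\overline v|^2+2\overline v\cdot(v-\overline v)+|v-\overline v|^2$ and integrate against $f(t,v)$. Conservation of mass and momentum (from the proof of Theorem~\ref{shso}) gives $\int f(t)=m$ and $\int (v-\overline v)f(t)\ud v=p-m\overline v=0$. It then remains to compute $\int|v-\overline v|^2f(t,v)\ud v$. Substituting $v-\overline v=\mathrm{e}^{-\gamma mt}(w-\overline v)$ and using the Jacobian cancellation yields $\mathrm{e}^{-2\gamma mt}\int|w-\overline v|^2f_0(w)\ud w$. This step needs a finite second moment of $f_0$; if that moment is infinite, both sides are $+\infty$.

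For (4), substitute into $H[f(t)]=\int f\log f\ud v$. We have $\log f(t,v)=\gamma mdt+\log f_0(w)$, so after the change of variables
\[
H[f(t)]=\int f_0(w)\bigl(\gamma mdt+\log f_0(w)\bigr)\ud w=H[f_0]+\gamma mdt\, m,
\]
which is the claimed formula, since $m=\int f_0$. This agrees with integrating $\frac{\ud}{\dt}H=\gamma d m^2$ from Proposition~\ref{evolution}; the two coincide because $m^2=m\int f_0$.

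The only delicate point is justification rather than computation. The change of variables must be applied to possibly non-integrable quantities such as $f_0\log f_0$ and $|v|^2f_0$, with the convention $0\log0=0$ on the zero set. I will state that each identity holds whenever the corresponding quantity for $f_0$ is finite, and in the extended sense otherwise. Since the substitution is a smooth bijection with constant Jacobian, this is immediate.
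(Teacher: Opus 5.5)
Your proposal is correct and follows the paper's proof: both read (1)--(2) directly off the explicit formula and obtain (3)--(4) by the affine change of variables $w=\overline v+\mathrm{e}^{\gamma mt}(v-\overline v)$. The only cosmetic difference is in (3), where you expand $|v|^2$ around $\overline v$ before substituting and kill the cross term using conservation of mass and momentum at time $t$, whereas the paper substitutes first and kills the cross term using $\overline v=p/m$ for $f_0$; your remarks on the extended-valued cases are a harmless addition the paper omits.
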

\begin{proof}
The first two assertions follow directly from the explicit formula~\eqref{solution}.
	To verify last two assertions \textup{(3)} and \textup{(4)}, we substitute~\eqref{solution} and use the change of variables:
	\[
	w=\overline{v}+\mathrm{e}^{\gamma mt}(v-\overline{v})
	\qquad\text{so that}\qquad
	v=\overline{v}+\mathrm{e}^{-\gamma mt}(w-\overline{v}),\quad
	\ud v=\mathrm{e}^{-\gamma mdt}\ud w.
	\]
For the third assertion, we use~\eqref{solution} and the above change of variables to find 
	\begin{align*}
		\int_{\R^d}|v|^2 f(t,v)\ud v
		&=\mathrm{e}^{\gamma mdt}\int_{\R^d}|v|^2
		f_0\!\left(\overline{v}+\mathrm{e}^{\gamma mt}(v-\overline{v})\right)\ud v\\
		&=\int_{\R^d}\left|\overline{v}+\mathrm{e}^{-\gamma mt}(w-\overline{v})\right|^2 f_0(w)\ud w\\
		&=\int_{\R^d}\Bigl(|\overline{v}|^2
		+2\mathrm{e}^{-\gamma mt}\,\overline{v}\cdot(w-\overline{v})
		+\mathrm{e}^{-2\gamma mt}|w-\overline{v}|^2\Bigr) f_0(w)\ud w.
	\end{align*}
	Since $p=\int_{\R^d} w f_0(w)\ud w, \quad m=\int_{\R^d} f_0(w)\ud w$ and $\overline{v}=p/m$, we have
	\[
	\int_{\R^d}\overline{v}\cdot(w-\overline{v})\,f_0(w)\ud w
	=\overline{v}\cdot p-m|\overline{v}|^2=0.
	\]
	 Therefore, we have 
	\[
	\int_{\R^d}|v|^2 f(t,v)\ud v
	=m|\overline{v}|^2+\mathrm{e}^{-2\gamma mt}\int_{\R^d}|w-\overline{v}|^2 f_0(w)\ud w.
	\]
	For the last assertion, again by~\eqref{solution} and the same change of variables, we have
	\begin{align*}
		H[f(t)]
		&=\int_{\R^d} f(t,v)\log f(t,v)\ud v\\
		&=\int_{\R^d}\mathrm{e}^{\gamma mdt}f_0\!\left(\overline{v}+\mathrm{e}^{\gamma mt}(v-\overline{v})\right)
		\Bigl(\gamma mdt+\log f_0\!\left(\overline{v}+\mathrm{e}^{\gamma mt}(v-\overline{v})\right)\Bigr)\ud v\\
		&=\gamma mdt\int_{\R^d} f_0(w)\ud w+\int_{\R^d} f_0(w)\log f_0(w)\ud w\\
		&=H[f_0]+\gamma mdt\int_{\R^d} f_0(v)\ud v.
	\end{align*}
\end{proof}

\section{Spatially inhomogeneous model} \label{sec:3}
In this section, we return to the full equation~\eqref{Eq} and refer to it as the spatially inhomogeneous model,
in contrast with its spatially homogeneous reduction~\eqref{shEq} studied in the previous section. In the inhomogeneous
setting, one cannot expect an explicit solution formula as in~\eqref{solution}. Nevertheless, a priori estimates allow us
to establish a local well-posedness in suitable function spaces, as well as global well-posedness, when the interaction
strength $\gamma$ is sufficiently small. Moreover, whenever a global solution exists, we also derive an asymptotic
completeness result.

\subsection{Local well-posedness} \label{sec:3.1}
In this subsection, we establish a local well-posedness for inhomogeneous equation~\eqref{Eq} in the function spaces
$W^{1,\infty}$ and $C_b^{1,\alpha}$, respectively. We emphasize that, unlike the usual kinetic Cucker--Smale
or Vlasov-type models where the force field has a convolution structure in $x$, the local alignment term $E[f]$ in~\eqref{Eq}
is genuinely nonlocal only in $v$ and depends pointwise on $x$. This feature makes a well-posedness theory more delicate;
see Remark~\ref{rk:difficulty} for further discussion. Consider the Cauchy problem for the kinetic equation:
\begin{equation}\label{C-0}
\begin{cases}
\displaystyle \partial_t f +v\cdot\nabla_x f=\gamma\nabla_v\cdot\left(E[f]f\right), \quad t > 0,~x, v \in {\mathbb R}^d, \\
\displaystyle f(0,\cdot)=f_0.
\end{cases}
\end{equation}
In what follows, we briefly outline a proof strategy to establish the local existence following the steps below, which is a standard method for nonlinear partial differential equations. \newline

\noindent $\bullet$~Step A.1:~Given initial data $f_0$ in some proper function spaces, we find a complete metric space ${\mathcal B}$ such that for $f\in {\mathcal B}$, $E[f]$ is well-defined and the characteristic system:
$$\begin{cases}
    \dfrac{\mathrm{d}X}{\mathrm{d}t}=V,\vspace{0.2cm}\\
    \dfrac{\mathrm{d}V}{\mathrm{d}t}=-\gamma E[f](t,X,V),
\end{cases}$$
is well-posed. \newline

\noindent $\bullet$~Step A.2:~We show that for $f\in {\mathcal B}$, the linear Cauchy problem
\begin{equation}\label{linear}
\begin{cases}
    \partial_th+v\cdot\nabla_xh-\gamma \nabla_v\cdot(E[f]h)=0,\quad 0\leq t\leq T,\\
    h(0,x,v)=f_0(x,v)
\end{cases}
\end{equation}
has a unique solution $h\in {\mathcal B}$ for $T$ small enough, so that we can define the operator $\Gamma:{\mathcal B}\rightarrow {\mathcal B}$ by $\Gamma f=h$. Then $f$ solves \eqref{Eq} if and only if $f$ is a fixed point of $\Gamma$. \newline

\noindent $\bullet$~Step A.3:~Show that $\Gamma$ has some contraction properties and use the Picard iteration process to construct the solution. 
Finally, we check the regularity of this solution.

\vspace{0.5cm}
As a preparation, we denote the characteristic vector field by
\[
\Psi_f(t,z)=\begin{pmatrix}
    v\\
    -\gamma E[f](t,z)
\end{pmatrix}, 
\]
where  $z= (x, v) \in \R^{2d}$. 
Then the characteristic system associated with the linear Cauchy problem \eqref{linear} can be rewritten as
\begin{equation}\label{cheq}
    \frac{\mathrm{d}Z_f}{\mathrm{d}t}=\Psi_f(t,Z_f),
\end{equation}
where $Z_f=(X_f,V_f)$.
Suppose that $f\in C([0,T]\times\mathbb{R}^{2d})\cap L^{\infty}([0,T],W^{1,\infty}(\mathbb{R}^{2d}))$ with $v$-$\mathrm{supp}(f(t)) \subseteq B_R:=\{v\in\mathbb{R}^d|\ |v|\leq R\}$ for some $R>0$ and any $t\in[0,T]$. Then we can see that $\Psi_f$ is locally Lipschitz with respect to $z$. Thus the characteristic system \eqref{cheq} is well-posed. When the solution exists, we let $Z_f(s,t;z)$ denote the characteristic curve with initial data $Z_f(t,t;z)=z$, where $0\leq s\leq t\leq T$. Then along the characteristic curve $Z_f(s,t;z)$, we have
$$\frac{\mathrm{d}}{\mathrm{d}s}h\left(s,Z_f(s,t;z)\right)=\gamma d\rho_f\left(s,X_f(s,t;z)\right)h\left(s,Z_f(s,t;z)\right),$$
where the spatial density $\rho_f$ is defined as $\displaystyle\rho_f(t,x)=\int_{\mathbb{R}^d}f(t,x,v_*)\mathrm{d}v_*.$
Thus, we have
\begin{equation}\label{h}
h(t,z)=f_0(Z_f(0,t;z))\exp{\left(\gamma d\int_0^t\rho_f(s,X_f(s,t;z))\mathrm{d}s\right)}.
\end{equation}
Note that the relation \eqref{h} provides an explicit representation formula for the linear Cauchy problem~\eqref{linear}. This formula will be used repeatedly to derive the estimates needed in the well-posedness analysis. Next, we  state two local well-posedness results on local solutions.

\begin{theorem}\label{W1infty}
	Let $f_0\in W^{1,\infty}(\R^{2d})$ be nonnegative and satisfy
	\begin{equation} \label{C-0-1}
	v\textup{-}\mathrm{supp}(f_0)\subset B_{R_0}, \quad \mbox{for some $R_0>0$}.
	\end{equation}
	Then there exists $T>0$ and a nonnegative solution $f\in W^{1,\infty}\big([0,T]\times\R^{2d}\big)$
		to~\eqref{C-0} satisfying the boundedness of $v$-support:
	\begin{equation}\label{supp}
		v\textup{-}\mathrm{supp}\bigl(f(t)\bigr)\subset B_{R_0}, \quad t\in[0,T].
	\end{equation}
	The solution is unique in the class of $W^{1,\infty}\big([0,T]\times\R^{2d}\big)$ functions satisfying~\eqref{supp}.
\end{theorem}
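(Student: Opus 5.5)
We follow Steps A.1--A.3 with the representation formula \eqref{h}. For $T,M>0$ to be chosen, let $\mathcal B$ be the set of nonnegative $f\in C([0,T]\times\R^{2d})$ with $\sup_{t}\|f(t)\|_{W^{1,\infty}}\le M$ and $v$-$\mathrm{supp}(f(t))\subset B_{R_0}$ for all $t\in[0,T]$, equipped with the metric $d(f,g)=\sup_{t\le T}\|f(t)-g(t)\|_{L^\infty}$. This metric is chosen because the nonlinearity loses a derivative, so we contract only in $L^\infty$. The set $\mathcal B$ is complete under $d$, since uniform limits preserve the Lipschitz bound, the nonnegativity and the support condition.

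For $f\in\mathcal B$ we have $E[f](t,x,v)=\rho_f(t,x)v-j_f(t,x)$ with $\rho_f=\int f\,\ud v_*$ and $j_f=\int v_*f\,\ud v_*$. Both satisfy $\|\rho_f\|_{W^{1,\infty}_x}\le C R_0^dM$ and $\|j_f\|_{W^{1,\infty}_x}\le CR_0^{d+1}M$. Hence $\Psi_f$ is globally Lipschitz in $z$ on $\{|v|\le R\}$, and \eqref{cheq} is well posed.

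\textbf{Support.} The support condition is preserved by the characteristic flow. Compute
\[
\tfrac12\tfrac{\ud}{\ud s}|V|^2=-\gamma\rho_f|V|^2+\gamma V\cdot j_f\le -\gamma\rho_f|V|\big(|V|-R_0\big),
\]
using $|j_f|\le R_0\rho_f$. So the ball $B_{R_0}$ is forward invariant. It follows that if $|v|>R_0$ then $|V_f(0,t;z)|>R_0$, and \eqref{h} gives $h(t,z)=0$. Thus $h=\Gamma f$ keeps the support in $B_{R_0}$, and $h\ge0$.

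\textbf{Invariance of $\mathcal B$.} From \eqref{h}, $\|h(t)\|_{L^\infty}\le \|f_0\|_{L^\infty}e^{\gamma dCR_0^dMT}$. For the Lipschitz bound, Gronwall gives $\mathrm{Lip}(Z_f(0,t;\cdot))\le e^{C(1+\gamma R_0^{d+1}M)T}$ on the relevant region. Differentiating \eqref{h} by the product rule then gives
\[
\mathrm{Lip}(h(t))\le e^{C(M)T}\Big(\mathrm{Lip}(f_0)+\|f_0\|_\infty\gamma d T\,C R_0^dM\,e^{C(M)T}\Big).
\]
Choose $M=2\|f_0\|_{W^{1,\infty}}+1$ and then $T$ small, so that $\Gamma:\mathcal B\to\mathcal B$. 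Time regularity comes from the equation itself, $|\partial_t h|\le |v||\nabla_x h|+\gamma|E||\nabla_vh|+\gamma d\rho_f h$, which is bounded. Hence $h\in W^{1,\infty}([0,T]\times\R^{2d})$.

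\textbf{Contraction.} This is the main step. For $f,g\in\mathcal B$ we have $\|E[f]-E[g]\|_{L^\infty(|v|\le R_0)}\le CR_0^{d+1}d(f,g)$, because the $v$-integrals run over the bounded set $B_{R_0}$. Gronwall on \eqref{cheq} then yields $|Z_f(0,t;z)-Z_g(0,t;z)|\le CT\,d(f,g)$. Inserting this into \eqref{h} and using the Lipschitz bounds of $f_0$, $\rho_f$ and $\rho_g$ gives
\[
\|\Gamma f-\Gamma g\|_{L^\infty}\le C(M,R_0,\|f_0\|_{W^{1,\infty}})\,T\,d(f,g).
\]
The Lipschitz control of $f_0$ and $\rho$ is exactly what absorbs the derivative loss. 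Shrinking $T$ makes $\Gamma$ a contraction, and the Banach fixed point theorem gives a solution $f\in\mathcal B$. It solves \eqref{C-0} in the weak/Lipschitz sense along characteristics.

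\textbf{Uniqueness.} Let $g$ be another $W^{1,\infty}$ solution with support in $B_{R_0}$, with bound $M'$. Then $g=\Gamma g$, since the linear problem \eqref{linear} with Lipschitz coefficients has the unique solution \eqref{h}. Rerun the same difference estimate with $M$ replaced by $\max(M,M')$ on a short interval, and iterate over successive intervals to cover $[0,T]$.
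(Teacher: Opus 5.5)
Your proof is correct, and its skeleton is the paper's: the representation formula \eqref{h}, invariance of a $W^{1,\infty}$-ball under $\Gamma$ for small $T$, and contraction only in the sup norm. Your closed condition $\le M$ together with the sup-norm metric makes $\mathcal B$ genuinely complete, so Banach's theorem applies verbatim. The paper reaches the same conclusion through Picard iteration plus lower semicontinuity of the Lipschitz constant.

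Two ingredients are handled differently.

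\textbf{Support.} The paper iterates in the enlarged ball $B_{2R_0}$, using only a crude speed bound for small $T$. It then recovers the sharp bound \eqref{supp} a posteriori from Lemma~\ref{desupp1}, a PDE-level moment argument with a convex $\Phi$. You instead prove forward invariance of $\R^d\times B_{R_0}$ for the frozen characteristic field, via $|j_f|\le R_0\rho_f$; this inequality is why nonnegativity has to be built into $\mathcal B$. Your computation is the one in Step D.2 of Lemma~\ref{desupp}, moved to the level of the iterates, so every $\Gamma f$ already has the sharp support.

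For the Gronwall and Lipschitz estimates you still need backward characteristics starting from $|v|\le R_0$ to stay in some fixed $B_R$ on a short interval. Forward invariance does not give backward invariance, so your ``relevant region'' should be read this way; it is where the paper's $R=2R_0$ quietly reappears.

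The paper's route buys some generality here. Lemma~\ref{desupp1} shows that \eqref{supp} holds for every nonnegative $W^{1,\infty}$ solution, so the support condition does not actually restrict the uniqueness class.

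\textbf{Uniqueness.} The paper first proves the a priori bound of Lemma~\ref{estimate}. It then argues by contradiction at $t_0=\sup\{t: f=g\text{ on }[0,t]\}$, using Lemma~\ref{estimate} to place both restrictions into a modified $\mathcal B$. You simply use the uniform bound $M'$ that is already contained in the assumption $g\in W^{1,\infty}([0,T]\times\R^{2d})$, and step forward over intervals of uniform length. This is shorter and makes Lemma~\ref{estimate} unnecessary.

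Both arguments rely, at the same level of rigor, on the fact that a Lipschitz solution coincides with its characteristic representation.
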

\begin{proof}
Since the proof is rather lengthy, we leave its proof in Section \ref{sec:3.1.1}.
\end{proof}
The same conclusion holds in H\"older spaces $C_b^{1,\alpha}$.
\begin{theorem}\label{Cb1alpha}
For $\alpha\in(0,1],$ let $f_0\in C_b^{1,\alpha}(\R^{2d})$ be nonnegative satisfying
	\[
	v\textup{-}\mathrm{supp}(f_0)\subset B_{R_0}, \quad \mbox{for some $R_0>0$}.
	\]
	Then there exists $T>0$ and a nonnegative solution $f\in C^1\big([0,T]\times\R^{2d}\big)\cap L^{\infty}\big([0,T],C_b^{1,\alpha}(\R^{2d})\big)$ to~\eqref{C-0}  satisfying~\eqref{supp}. Moreover, the solution is unique in the class
	$C^1\big([0,T]\times\R^{2d}\big)\cap L^{\infty}\big([0,T],C_b^{1,\alpha}(\R^{2d})\big)$
	functions satisfying~\eqref{supp}.
\end{theorem}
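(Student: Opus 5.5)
The plan is to run the same three-step scheme (Steps A.1--A.3) used for Theorem~\ref{W1infty}, now in a H\"older class. The metric space is
\[
\mathcal B=\Bigl\{ f\in C([0,T]\times\R^{2d}) : f\ge 0,\ v\text{-}\mathrm{supp}(f(t))\subset B_{R_0},\ \sup_{t\le T}\Vert f(t)\Vert_{C^{1,\alpha}}\le M\Bigr\},
\]
where $M=2\Vert f_0\Vert_{C^{1,\alpha}}$ (possibly with an extra constant) and $T$ is small. It carries the weaker metric $d(f,g)=\sup_{t\le T}\Vert f(t)-g(t)\Vert_{L^\infty}$, or the $W^{1,\infty}$ distance.

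\textbf{Step A.1: properties of the field.} For $f\in\mathcal B$, the compact $v$-support gives
\[
\rho_f(t,x)=\int_{B_{R_0}}f\,\ud v_*, \qquad E[f]=v\rho_f-j_f, \qquad j_f=\int_{B_{R_0}} v_*f\,\ud v_*.
\]
Hence $\rho_f,j_f\in C^{1,\alpha}_b$ in $x$ with norms bounded by $CR_0^dM$. The field $\Psi_f$ is therefore $C^{1,\alpha}$ in $z$ on bounded $v$-sets. Its growth in $v$ is linear, so characteristics exist globally.

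\textbf{Invariance of the $v$-support.} On $|v|=R_0$ we have
\[
V\cdot\dot V=-\gamma\bigl(|V|^2\rho_f-V\cdot j_f\bigr)\le -\gamma\rho_f\,|V|(|V|-R_0)\le 0,
\]
because $|j_f|\le R_0\rho_f$. So characteristics starting outside $B_{R_0}$ never enter it. By the formula \eqref{h}, the support of $h=\Gamma f$ therefore stays in $B_{R_0}$, and $h\ge0$.

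\textbf{Step A.2: $\Gamma$ maps $\mathcal B$ into itself.} Differentiate the flow $Z_f(0,t;z)$. Gr\"onwall gives
\[
|\nabla_z Z_f|\le e^{CMt}, \qquad [\nabla_z Z_f]_\alpha\le Cte^{CMt}.
\]
The second bound comes from the variational equation $\partial_s\nabla Z=\nabla\Psi_f(Z)\nabla Z$, using that $\nabla\Psi_f$ is $\alpha$-H\"older with constant $CM$ on the relevant region. The exponential weight $\exp(\gamma d\int\rho_f)$ is handled the same way. Composing with $f_0\in C^{1,\alpha}_b$ via \eqref{h} then yields
\[
\Vert h(t)\Vert_{C^{1,\alpha}}\le \Vert f_0\Vert_{C^{1,\alpha}}(1+C(M)t)e^{C(M)t}\le M
\]
for $T$ small. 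Here one only needs $v$ in a bounded set, say $|v|\le R_0$, since $h$ vanishes elsewhere. The linear growth $v$ in $\nabla_x$ of $E$ is harmless on the support.

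\textbf{Step A.3: contraction and regularity.} I expect the contraction to be the main obstacle, because $E[f]$ is local in $x$. Unlike the convolution case, $E[f]-E[g]$ is controlled only by $\Vert f-g\Vert_{L^\infty}$ pointwise, while Lipschitz bounds on $h$ need derivatives of $E$. The difference estimate works as follows:
\[
|Z_f-Z_g|\le Ct\,\Vert f-g\Vert_{L^\infty}.
\]
Then \eqref{h} gives
\[
\Vert\Gamma f-\Gamma g\Vert_{L^\infty}\le C(M)T\Vert f-g\Vert_{L^\infty},
\]
using only $\mathrm{Lip}(f_0)$ and $\mathrm{Lip}(\rho)$, both bounded in $\mathcal B$. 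So $\Gamma$ is a contraction in the $L^\infty$ metric. The ball $\mathcal B$ is closed in this metric: the uniform $C^{1,\alpha}$ bounds pass to limits by Arzel\`a--Ascoli and lower semicontinuity of H\"older seminorms, while the support and sign conditions are also preserved. The Picard iterates thus converge to a fixed point $f\in L^\infty([0,T],C^{1,\alpha}_b)$.

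\textbf{Time regularity.} Given the fixed point, $\rho_f$ and $j_f$ are continuous in $t$. The flow is $C^1$ in $(t,z)$, so \eqref{h} shows $f\in C^1([0,T]\times\R^{2d})$, and $f$ satisfies \eqref{C-0} classically.

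\textbf{Uniqueness.} Any two solutions in the stated class with $v$-support in $B_{R_0}$ both lie in $\mathcal B$ for some $M$ and are fixed points of $\Gamma$. The $L^\infty$ contraction estimate on a short interval, iterated, forces them to coincide.
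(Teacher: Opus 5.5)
Your proposal is correct and follows the paper's proof essentially step for step. Both use the representation formula \eqref{h}, Gr\"onwall bounds on $\partial Z_f/\partial z$ and its $\alpha$-H\"older seminorm so that $\Gamma$ maps the $C^{1,\alpha}$-ball into itself for small $T$, a contraction only in $C^0$, Arzel\`a--Ascoli to pass the $C^{1,\alpha}$ bound to the limit, and uniqueness from the same $C^0$ contraction on short time intervals.

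The differences are cosmetic and, if anything, mildly cleaner. You keep the $v$-support exactly in $B_{R_0}$ via the invariance estimate $|j_f|\le R_0\rho_f$, instead of working in $B_{2R_0}$ and invoking Lemma~\ref{desupp1} afterwards. You also equip $\mathcal B$ with the sup metric, in which $\Gamma$ is actually shown to contract, rather than the $C^{1,\alpha}$ distance the paper nominally uses.
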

\begin{proof}
Since the proof is rather lengthy, we leave its proof in Section \ref{sec:3.1.2}.
\end{proof}
We first notice that nonnegative $W^{1,\infty}$ solutions to \eqref{C-0} must satisfy \eqref{supp}.
\begin{lemma}\label{desupp1}
    Let $f_0\in W^{1,\infty}(\R^{2d})$ be nonnegative and satisfy
    \[
	v\textup{-}\mathrm{supp}(f_0)\subset B_{R_0}, \quad \mbox{for some $R_0>0$}.
	\]
    Assume that $f\in W^{1,\infty}\big([0,T]\times\R^{2d}\big)$ is a nonnegative solution to \eqref{C-0}, then we have
    $$v\textup{-}\mathrm{supp}\bigl(f(t)\bigr)\subset B_{R_0}, \quad t\in[0,T].$$
\end{lemma}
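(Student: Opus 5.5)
Since $f$ is a given nonnegative solution in $W^{1,\infty}([0,T]\times\R^{2d})$, we can treat it as known and use the characteristic representation \eqref{h}. The key observation is that $E[f]$ is only defined through $f$ itself. Since $f\in W^{1,\infty}$ is bounded and Lipschitz, but \emph{a priori} we do not know that its $v$-support is bounded, we first check that the characteristic field $\Psi_f$ is well-behaved. For this we need $\rho_f$ and $\int v_* f\,\ud v_*$ to be finite, which requires some integrability in $v$. I would handle this by interpreting the equation as in the statement, so that $E[f]$ makes sense. Then
\[
-\gamma E[f](t,x,v)=-\gamma\rho_f(t,x)\bigl(v-u_f(t,x)\bigr),\qquad u_f:=\frac{\int v_*f\,\ud v_*}{\rho_f}
\]
on $\{\rho_f>0\}$, and $-\gamma E[f]=0$ where $\rho_f=0$. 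The assumption $f\in W^{1,\infty}$ in $(t,z)$ makes the field Lipschitz in $v$ with locally bounded coefficients, so characteristics $Z_f(s,t;z)$ exist and are unique.

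Step 1 (uniqueness of the linear problem and representation). Since $f$ solves the linear equation \eqref{linear} with coefficient $E[f]$, and $f$ is Lipschitz, $f$ is constant up to the exponential factor along characteristics. Hence \eqref{h} holds with $h=f$:
\[
f(t,z)=f_0(Z_f(0,t;z))\exp\Bigl(\gamma d\int_0^t\rho_f(s,X_f(s,t;z))\,\ud s\Bigr).
\]
This is justified by differentiating $s\mapsto f(s,Z_f(s,t;z))$, which is absolutely continuous because $f$ is Lipschitz in $(t,z)$ and $Z_f$ is Lipschitz in $s$. In particular $f(t,z)>0$ iff $f_0(Z_f(0,t;z))>0$, which forces $|V_f(0,t;z)|\le R_0$.

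Step 2 (the velocity ball is forward invariant). At each $(t,x)$ the local velocity $u_f(t,x)$ is an average of velocities in the $v$-support of $f(t,x,\cdot)$. We then compute
\[
\frac{\ud}{\ud s}\frac12|V|^2=-\gamma\rho_f\bigl(|V|^2-V\cdot u_f\bigr)\le -\gamma\rho_f|V|\bigl(|V|-|u_f|\bigr).
\]
Set $R(t):=\sup\{|v|: f(t,x,v)\neq0\}$, so that $|u_f(t,x)|\le R(t)$. A characteristic starting in $B_{R_0}$ then cannot leave any ball of radius at least $\max_s R(s)$. To close the argument, I would use a continuity/bootstrap argument. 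Let $t^*=\sup\{t: R(s)\le R_0 \ \forall s\le t\}$, and let $\delta>0$ be small. If $R$ slightly exceeded $R_0$ on $[t^*,t^*+\delta]$, take the point with maximal $|v|$. Its backward characteristic ends in $B_{R_0}$, while $|V|$ is nonincreasing whenever $|V|\ge R(s)\ge|u_f|$, so $|V|$ cannot exceed $R_0$. Cleanly, one shows $\max(|V(s)|,R_0)$ is nonincreasing along forward characteristics whenever $|u_f|\le \max(R(s),R_0)$. Combining this with the representation yields $R(t)\le \sup_{s\le t}\max(R(s),R_0)$, which gives no contradiction by itself. So instead I would argue directly: on $\{|V|>R_0\}$, compare with a Grönwall argument for $\Phi(s)=\sup_{z\in\mathrm{supp}} (|V|-R_0)_+$.

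Main obstacle. The delicate point is this self-referential bound: $u_f$ is controlled by the support we are trying to control. I would resolve it with a Grönwall estimate. Let $D(t):=(R(t)-R_0)_+$. Since $\rho_f$ is bounded on $[0,T]$ (by Lipschitz bounds plus the support bound obtained on a short interval), the inequality above gives, along a backward characteristic of a support point,
\[
\frac{\ud}{\ud s}(|V|-R_0)_+\le \gamma\|\rho_f\|_\infty D(s).
\]
Hence $D(t)\le \gamma\|\rho_f\|_\infty\int_0^tD(s)\,\ud s$ with $D(0)=0$, so $D\equiv0$ and the claim follows. Making $R(t)$ finite, and $\rho_f$ bounded, a priori is where care is needed. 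The finiteness of $\rho_f$ and of the moment is implicit in $f$ being a solution, so I would first work on the set where these are finite, e.g. using local Lipschitz bounds of the field on compact time intervals.
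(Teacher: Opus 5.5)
Your final mechanism differs from the paper's, and it is sound once an a priori support bound is available. It has three parts: the representation formula sends every support point at time $t$ back into $\R^d\times B_{R_0}$; along characteristics, $\frac{\ud}{\ud s}|V|\le\gamma\rho_f(|u_f|-|V|)$ with $|u_f(s,\cdot)|\le R(s)$; and Gr\"onwall closes for $D(t)=(R(t)-R_0)_+$.

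The paper instead shows that $M_\Phi(t)=\int\Phi(|v|)f\,\ud x\,\ud v$ is nonincreasing, for smooth convex nondecreasing $\Phi$ vanishing on $[0,R_0]$. The key step is Chebyshev's integral inequality,
\[
\int\Phi'(|v|)\frac{v}{|v|}\cdot(v-u)\,\ud\mu_x\ \ge\ \Bigl(\int\Phi'(|v|)\,\ud\mu_x\Bigr)\Bigl(\int(|v|-|u|)\,\ud\mu_x\Bigr)\ \ge\ 0,
\]
which holds for every probability measure $\mu_x$ with mean $u$. The paper therefore never bounds $|u|$ by the current support radius. Your argument does, and that self-reference is where it is incomplete.

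\textbf{The gap.} The inequality $D(t)\le\gamma\Vert\rho_f\Vert_{L^\infty}\int_0^tD(s)\,\ud s$ with $D(0)=0$ forces $D\equiv0$ only if $D$ is bounded on $[0,T]$ and $\rho_f\in L^\infty$. If $R(t)=+\infty$, it says nothing.
\begin{itemize}
\item The lemma's hypotheses do not supply this. $f\in W^{1,\infty}$ with $E[f]$ well defined gives only pointwise finiteness of $\rho_f$ and $m_f:=\int vf\,\ud v$, which does not bound the $v$-support.
\item Your justification ``by Lipschitz bounds plus the support bound obtained on a short interval'' presupposes the conclusion.
\item The same issue sits in Step 1. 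Well-posedness of the characteristics needs $E[f]$ Lipschitz in $x$, not only in $v$. That requires control of $\int(1+|v_*|)\,|\nabla_xf(t,x,v_*)|\,\ud v_*$, which again comes from a bounded $v$-support.
\end{itemize}

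\textbf{How to close it.} Make the missing input explicit.
\begin{itemize}
\item If $v\textup{-}\mathrm{supp}(f(t))\subset B_R$ for some $R<\infty$ and all $t\in[0,T]$, your argument is complete. This is exactly how the paper invokes the lemma, e.g.\ for $f\in\mathcal{B}$ with $R=2R_0$.
\item Alternatively, suppose $\rho_f,m_f\in L^\infty$ and $E[f]$ is locally Lipschitz. Then $|\dot V|\le\gamma\bigl(\Vert\rho_f\Vert_{L^\infty}|V|+\Vert m_f\Vert_{L^\infty}\bigr)$ first gives $R(t)\le(R_0+Ct)\mathrm{e}^{Ct}<\infty$, after which your Gr\"onwall step closes.
\end{itemize}

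Two smaller remarks. First, the chain rule along a single characteristic for a merely Lipschitz $f$ needs the usual argument: the equation holds a.e., the flow is bi-Lipschitz so it holds along a.e.\ characteristic, and continuity extends it. Second, once the support bound is in place, your pointwise argument needs no integrability of $f$ in $x$. The paper's $M_\Phi$ argument tacitly needs $\int\Phi(|v|)f\,\ud x\,\ud v<\infty$ and enough decay in $x$ to discard the transport term.
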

\begin{proof}
    Let $\Phi:[0,\infty)\to[0,\infty)$ be an arbitrary smooth, convex, nondecreasing funtion, with
	\[
	\Phi(r)=0 \ \text{for } r\le R_0,
	\qquad
	\Phi(r)>0 \ \text{for } r>R_0.
	\]
	Define
	\[
	M_\Phi(t):=\int_{\mathbb R^{2d}} \Phi(|v|)\,f(t,x,v)\,\ud x\,\ud v.
	\]
	Then we differentiate $M_{\Phi}$ using the equation \eqref{C-0}. The $x$-transport term integrates to $0$ on $\mathbb R^d_x$. For the $v$-divergence term,
	integration by parts in $v$ yields
	\[
	\frac{\ud}{\ud t}M_\Phi(t)
	=
	\gamma\int_{\mathbb{R}^{2d}} \Phi(|v|)\,\nabla_v\cdot(E f)\,\ud x\,\ud v
	=
	-\gamma\int_{\mathbb{R}^{2d}} \nabla_v\Phi(|v|)\cdot E[f]f\,\ud x\,\ud v.
	\]
	Since $\nabla_v\Phi(|v|)=\Phi'(|v|)\frac{v}{|v|}$ and $E[f]=v\rho-m=\rho(v-u)$, where $\rho=\rho_f$ and
    \[
    m(t,x)=\int_{\mathbb{R}^d}vf(t,x,v)\mathrm{d}v,\quad u(t,x)=\frac{m(t,x)}{\rho(t,x)},
    \]
    we get
	\[
	\frac{\ud}{\ud t}M_\Phi(t)
	=
	-\gamma\int_{\mathbb{R}^{2d}} \Phi'(|v|)\frac{v}{|v|}\cdot\big(v\rho-m\big)\, f\,\ud x\,\ud v.
	\]
	At fixed $(t,x)$, writing $d\mu_x(v):=\rho^{-1} f(t,x,v)\,dv$ (when $\rho>0$), the integrand becomes
	\[
	\rho^2 \int_{\mathbb{R}^d} \Phi'(|v|)\frac{v}{|v|}\cdot (v-u)\, \ud\mu_x(v).
	\]
	  Since $\Phi$ is nondecreasing and convex, $\Phi'$ is nonnegative and nondecreasing. Hence by Cauchy-Schwarz inequality and Chebyshev integral inequality, we have
      \begin{align*}
          \int_{\mathbb{R}^d} \Phi'(|v|)\frac{v}{|v|}\cdot (v-u)\, \ud\mu_x(v)&\geq\int_{\mathbb{R}^d} \Phi'(|v|)(|v|-|u|)\, \ud\mu_x(v)\\
          &\geq\left(\int_{\mathbb{R}^d} \Phi'(|v|)\, \ud\mu_x(v)\right)\cdot\left(\int_{\mathbb{R}^d}(|v|-|u|)\, \ud\mu_x(v)\right).
      \end{align*}
      Note that
      $$\int_{\mathbb{R}^d}(|v|-|u|)\, \ud\mu_x(v)\geq\left|\int_{\mathbb{R}^d}v\,\ud\mu_x(v)\right|-|u|=0.$$
      Therefore
	\[
	\frac{\ud}{\ud t}M_\Phi(t)\le 0.
	\]
	But $M_\Phi(0)=0$ because $\Phi(|v|)=0$ on the initial support, hence $M_\Phi(t)\equiv 0$ for all $t$.
	This forces $f(t,x,v)=0$ a.e.\ on $\{|v|>R_0\}$, i.e.\ the velocity support cannot leave $B_{R_0}$.
\end{proof}
\subsubsection{Proof of the existence part of Theorem \ref{W1infty}} \label{sec:3.1.1}
In this part, we perform a proof strategy which was delineated in the beginning of this section to derive an existence result for local $W^{1,\infty}$ solutions. \newline

Suppose that an initial datum $f_0\in W^{1,\infty}(\mathbb{R}^{2d})$ has a compact $v$-support:
\[ v\textup{-}\mathrm{supp}(f_0)\subseteq B_{R_0}, \quad \mbox{for some $R_0>0$.} \]
Next, we show that there exists some $T>0$ and a solution $f\in W^{1,\infty}([0,T]\times\mathbb{R}^{2d})$ to \eqref{C-0} in several steps. \newline

\noindent $\bullet$~Step B.1 (Definition of the metric space ${\mathcal B}$):~We set 
\[ M_0:=\Vert f_0\Vert_{W^{1,\infty}}<+\infty, \quad R:=2R_0>R_0 \quad \mbox{and} \quad M:=2M_0+1>M_0 \]
and define the set ${\mathcal B}$:
\[ {\mathcal B}:=\left\{f\in C([0,T]\times\mathbb{R}^{2d})|\ \Vert f(t)\Vert_{W^{1,\infty}} < M \quad \textup{and} \quad v\textup{-}\mathrm{supp}(f(t)) \subseteq B_R\textup{ for }t\in [0,T]\right\}, \]
where $T$ is some small number to be determined. Then the set ${\mathcal B}$ equipped with the distance
$$d(f,g):=\sup_{t\in[0,T]}\Vert f(t)-g(t)\Vert_{W^{1,\infty}},$$
becomes a complete metric space.
For $f\in {\mathcal B}$, according to the discussion before, the characteristic system \eqref{cheq} is well-posed and gives rise to the characteristic curve $Z_f(s,t;z)$. \newline

\noindent $\bullet$~Step B.2 (Definition of the operator $\Gamma$):  In the sequel, we set $C=C(\gamma ,d,R,M)$ to denote a general constant depending on $\gamma ,d,R,M$, which may vary from line to line. Given $f\in {\mathcal B}$, it is obvious that on $[0,T]\times\mathbb{R}^d\times B_R$, we have $$|\Psi_f(t,z)|\leq C.$$
Thus, as long as $T<\dfrac{R_0}{C}$, the characteristic curve starting from $\mathbb{R}^d\times B_{R_0}$ will not escape $\mathbb{R}^d\times B_R$ up to time $T$, hence the solution $h$ to \eqref{cheq} satisfies
$$v\textup{-}\mathrm{supp}(h(t)) \subseteq B_R,\quad\forall~ t\in[0,T].$$ Indeed, once we showed the existence of $W^{1, \infty}$ solutions,  we can choose  $R= R_0$ by  Lemma \ref{desupp}. Next, we need to estimate $\Vert h(t)\Vert_{W^{1,\infty}}$. Note that 
\[
\Vert\rho_f\Vert_{L^{\infty}}\leq C R^d \|f\|_{L^\infty} \leq C. 
\]
Hence, it follows from \eqref{h} that 
$$\sup_{[0,T]\times\mathbb{R}^{2d}}|h(t,z)|\leq\sup_{\mathbb{R}^{2d}}|f_0(z)|\mathrm{e}^{CT}.$$
It follows that for $T$ small enough, we have
$$\sup_{[0,T]\times\mathbb{R}^{2d}}|h(t,z)|\leq 2\sup_{\mathbb{R}^{2d}}|f_0(z)|.$$ To estimate $\mathrm{Lip}(h(t))$, we first need to estimate $\mathrm{Lip}(Z_f)$. From the expression of $\Psi_f$, we can easily get 
\[
\| \gamma E [f] \|_{W^\infty} \leq C R^{d+1} \|f\|_{W^{1, \infty}}.
\]
Thus, we have
$$\mathrm{Lip}(\Psi_f(t))\leq C,\quad\forall~ t\in[0,T].$$
Note that $Z_f(s,t;z)$ satisfies
$$Z_f(s,t;z)=z-\int_s^t \Psi_f(\tau,Z_f(\tau,t;z))\mathrm{d}\tau,$$
which implies 
$$|Z_f(s,t;z_1)-Z_f(s,t;z_2)|\leq|z_1-z_2|+C\int_s^t|Z_f(\tau,t;z_1)-Z_f(\tau,t;z_2)|\mathrm{d}\tau.$$
By Gronwall's inequality (for a differential inequality backward in time), we get
$$|Z_f(s,t;z_1)-Z_f(s,t;z_2)|\leq|z_1-z_2|\exp{(C(t-s))}\leq\mathrm{e}^{CT}|z_1-z_2|.$$
We also note that $\rho_f \in W^{1, \infty}$ with 
\[
\| \rho_f\|_{W^{1, \infty}} \leq C  R^d \|f\|_{W^{1, \infty}}.   
\]
Therefore, it follows from \eqref{h} and the Mean Value Theorem: 
\[ |e^{x} - e^y| \leq e^{\max\{|x|, |y|\}} |x- y| \]
to get 
\begin{align*}
 &|h(t,z_1)-h(t,z_2)|\\
 & \hspace{1cm} \leq \Big| f_0(Z_f(0,t;z_1))\exp{\Big (\gamma d\int_0^t\rho_f(s,X_f(s,t;z_1))\mathrm{d}s \Big)} \\
 & \hspace{1.5cm} -f_0(Z_f(0,t;z_2))\exp{\Big(\gamma d\int_0^t\rho_f(s,X_f(s,t;z_1))\mathrm{d}s\Big)} \Big |  \\
 & \hspace{1cm} +\Big |f_0(Z_f(0,t;z_2)) \exp \Big(\gamma d\int_0^t\rho_f(s,X_f(s,t;z_1))\mathrm{d}s\Big)  \\
 & \hspace{1.5cm} - f_0(Z_f(0,t;z_2))\exp{\Big(\gamma d\int_0^t\rho_f(s,X_f(s,t;z_2))\mathrm{d}s \Big)} \Big|\\
 & \hspace{1cm} \leq \mathrm{e}^{CT}\mathrm{Lip}(f_0)|z_1-z_2|+CT\mathrm{e}^{CT}|z_1-z_2|.
\end{align*}
Thus for $T$ small enough, we have
$$\mathrm{Lip}(h(t))\leq 2\mathrm{Lip}(f_0)+1.$$
Hence, we have
\[ \Vert h(t)\Vert_{W^{1,\infty}}\leq M, \quad h\in {\mathcal B}. \]
Therefore, we can define the operator $\Gamma: {\mathcal B}~\rightarrow~{\mathcal B}$ by $\Gamma f=h$. \newline

\noindent $\bullet$~Step B.3 (Constructing the solution):~Next, we want to show that $\Gamma$ has some contraction properties, which will be essential when constructing the solution. Actually, it is enough to show that $\Gamma$ is a contraction in the space $C^0$. Given $f,g\in {\mathcal B}$, we first need to estimate
\[ \Psi_f-\Psi_g \quad \mbox{and} \quad Z_f-Z_g. \]
Note that we also have
\[ f(0, \cdot) = g(0, \cdot)= f_0(\cdot), \quad \Psi_f(t,z)-\Psi_g(t,z)=\begin{pmatrix}
    0\\
    -\gamma\int_{\mathbb{R}^d}(v-v_*)(f(t,x,v_*)-g(t,x,v_*))\mathrm{d}v_*
\end{pmatrix}. 
\]
Thus, we have
\[ \left|\Psi_f(t,z)-\Psi_g(t,z)\right|\leq C\Vert f-g\Vert_{C^0}. \]
It follows that
\begin{align*}
\begin{aligned}
&\left|Z_f(s,t;z)-Z_g(s,t;z)\right|\\
& \hspace{1cm} =\left|\int_s^t\left(\Psi_f(\tau,Z_f(\tau,t;z))-\Psi_g(\tau,Z_g(\tau,t;z))\right)\mathrm{d}\tau\right|\\
& \hspace{1cm} \leq \int_s^t\left|\Psi_f(\tau,Z_f(\tau,t;z))-\Psi_f(\tau,Z_g(\tau,t;z))\right|\mathrm{d}\tau \\
& \hspace{1.2cm} +\int_s^t\left|\Psi_f(\tau,Z_g(\tau,t;z))-\Psi_g(\tau,Z_g(\tau,t;z))\right|\mathrm{d}\tau\\
& \hspace{1cm} \leq \int_s^t\mathrm{Lip}(\Psi_f(\tau))\cdot\left|Z_f(\tau,t;z)-Z_g(\tau,t;z)\right|\mathrm{d}\tau+C(t-s)\Vert f-g\Vert_{C^0}\\
& \hspace{1cm} \leq CT\Vert f-g\Vert_{C^0}+C\int_s^t\left|Z_f(\tau,t;z)-Z_g(\tau,t;z)\right|\mathrm{d}\tau.
\end{aligned}
\end{align*}
By Gronwall's lemma, we have
$$\left|Z_f(s,t;z)-Z_g(s,t;z)\right|\leq CT\Vert f-g\Vert_{C^0}\cdot\exp{\left(C(t-s)\right)}\leq CT\mathrm{e}^{CT}\Vert f-g\Vert_{C^0}.$$
It follows from \eqref{h} and the Mean Value Theorem that 
\begin{align}\label{contraction}
\begin{aligned}
&\Big |(\Gamma f)(t,z)-(\Gamma g)(t,z) \Big |\\
& \hspace{1cm} = \Big | f_0(Z_f(0,t;z))\exp{\Big(\gamma d\int_0^t\rho_f(s,X_f(s,t;z))\mathrm{d}s \Big)} \\
& \hspace{2cm} -f_0(Z_g(0,t;z))\exp \Big(\gamma d\int_0^t\rho_g(s,X_g(s,t;z))\mathrm{d}s \Big) \Big |\\
& \hspace{1cm} \leq \Big |f_0(Z_f(0,t;z))\exp \Big(\gamma d\int_0^t \rho_f(s,X_f(s,t;z))\mathrm{d}s \Big) \\
& \hspace{2cm}  - f_0(Z_f(0,t;z))\exp \Big(\gamma d\int_0^t\rho_f(s,X_g(s,t;z))\mathrm{d}s \Big) \Big|\\
& \hspace{1cm} + \Big|f_0(Z_f(0,t;z))\exp \Big (\gamma d\int_0^t\rho_f(s,X_g(s,t;z))\mathrm{d}s \Big) \\
& \hspace{2cm} -f_0(Z_f(0,t;z))\exp \Big (\gamma d\int_0^t\rho_g(s,X_g(s,t;z))\mathrm{d}s \Big) \Big |\\
& \hspace{1cm} +\Big| f_0(Z_f(0,t;z))\exp \Big(\gamma d\int_0^t\rho_g(s,X_g(s,t;z))\mathrm{d}s \Big) \\
& \hspace{2cm} -f_0(Z_g(0,t;z))\exp \Big (\gamma d\int_0^t\rho_g(s,X_g(s,t;z))\mathrm{d}s \Big) \Big |\\
& \hspace{1cm} \leq CT\Vert X_f-X_g\Vert_{C^0}+CT\Vert\rho_f-\rho_g\Vert_{C^0}+C\Vert Z_f-Z_g\Vert_{C^0}\\
& \hspace{1cm} \leq CT\mathrm{e}^{CT}\Vert f-g\Vert_{C^0}.
\end{aligned}
\end{align}
Thus, we have
\[ \Vert\Gamma f-\Gamma g\Vert_{C^0}\leq\frac{1}{2}\Vert f-g\Vert_{C^0}, \quad 0 < T \ll 1. \]
We define
\[ f^{(0)}(t,z)=f_0(z), \quad f^{(n)}=\Gamma^nf^{(0)}, \quad n \geq 1, \quad  \mbox{for $(t,z)\in[0,T]\times\mathbb{R}^{2d}$}, \]
Then we have
$$\Vert f^{(n+1)}-f^{(n)}\Vert_{C^0}\leq\frac{1}{2}\Vert f^{(n)}-f^{(n-1)}\Vert_{C^0}.$$
Thus $\{f^{(n)}\}$ is a Cauchy sequence in $C^0$ and converges to some function $f\in C([0,T]\times\mathbb{R}^{2d})$ with $v$-support contained in $B_R$. Besides, for a fixed $t\in[0,T]$, we also have
$$|f^{(n)}(t,z_1)-f^{(n)}(t,z_2)|\leq\mathrm{Lip}(f^{(n)}(t))|z_1-z_2|,\quad\forall z_1,z_2\in\mathbb{R}^{2d}.$$
Letting $n\rightarrow\infty$, we get
\begin{align*}
\begin{aligned}
    \Vert f(t)\Vert_{W^{1,\infty}}&=\Vert f(t)\Vert_{L^{\infty}}+\mathrm{Lip}(f(t))\\
    &\leq\limsup_{n\rightarrow\infty}\left(\Vert f^{(n)}(t)\Vert_{L^{\infty}}+\mathrm{Lip}(f^{(n)}(t))\right) \\
    &=\limsup_{n\rightarrow\infty}\Vert f^{(n)}(t)\Vert_{W^{1,\infty}}\leq M.
\end{aligned}
\end{align*}
This implies $f\in {\mathcal B}$ and 
$$\Vert\Gamma f-f\Vert_{C^0}\leq\Vert\Gamma f-\Gamma f^{(n)}\Vert_{C^0}+\Vert f^{(n+1)}-f\Vert_{C^0}\leq\frac{1}{2}\Vert f-f^{(n)}\Vert_{C^0}+\Vert f^{(n+1)}-f\Vert_{C^0}\rightarrow 0\quad (n\rightarrow\infty).$$
Hence we get $\Gamma f=f$, i.e. $f$ is a solution to \eqref{Eq}. Moreover, by \eqref{h} we know that $\partial_t f$ is also in $L^\infty$, which implies $f\in W^{1,\infty}([0,T]\times\mathbb{R}^{2d})$. The property \eqref{supp} follows from Lemma ~\ref{desupp1}.

\subsubsection{Proof of the existence part of Theorem \ref{Cb1alpha}} \label{sec:3.1.2}
By the same procedure as in Section \ref{sec:3.1.1}, we can derive the local existence for $C_b^{1,\alpha}$ solutions. \newline

Suppose that the initial datum $f_0\in C_b^{1,\alpha}(\mathbb{R}^{2d})$ has a compact $v$-support for some $\alpha\in(0,1]$. Then, we claim that there exists some $T>0$ and a solution $f\in C^1([0,T]\times\mathbb{R}^{2d})\cap L^{\infty}([0,T],C_b^{1,\alpha}(\mathbb{R}^{2d}))$ to \eqref{C-0} satisfying \eqref{supp}  for some $R>0$, with initial datum $f_0$. \newline

\noindent $\bullet$~Step C.1 (Definition of the metric space ${\mathcal B}$):~We set 
\[ M_0:=\Vert f_0\Vert_{C^{1,\alpha}}<+\infty, \quad R:=2R_0 \quad \mbox{and} \quad M:=2M_0+1, \]
and define
\[ {\mathcal B}:=\left\{f\in C([0,T]\times\mathbb{R}^{2d})|\ \Vert f(t)\Vert_{C^{1,\alpha}} < M \quad  \textup{and} \quad v\textup{-}\mathrm{supp}f(t)\subseteq B_R\textup{ for }t\in [0,T]\right\}, \]
where $T$ is a positive number to be determined. Then equipped with the distance
$$d(f,g):=\sup_{t\in[0,T]}\Vert f(t)-g(t)\Vert_{C^{1,\alpha}},$$
the new ${\mathcal B}$ becomes a complete metric space.

\smallskip 

\noindent $\bullet$~Step C.2 (Definition of the operator $\Gamma$): In the sequel, let $C=C(\gamma,d,R,M,\alpha)$ denote a general constant depending on $\gamma,d,R,M,\alpha$, which may vary from line to line. Since $C_b^{1,\alpha}$ is a subspace of $W^{1,\infty}$, the arguments in the previous part  still apply here. That means, given $f\in {\mathcal B}$, the characteristic equation \eqref{cheq} is well-posed and gives rise to the characteristic curve $Z_f(s,t;z)$. The linear Cauchy problem \eqref{linear} is solved by \eqref{h}:
$$h(t,z)=f_0(Z_f(0,t;z))\exp{\left(\gamma d\int_0^t\rho_f(s,X_f(s,t;z))\mathrm{d}s\right)}.$$
Besides, for $T$ small enough, we have
\[ v\textup{-}\mathrm{supp}(h(t))\subseteq B_R,\quad\forall~t\in[0,T], \quad \mbox{and} \quad \sup_{[0,T]\times\mathbb{R}^{2d}}|h(t,z)|\leq 2\sup_{\mathbb{R}^{2d}}|f_0(z)|. \]
To estimate $\nabla_z h$, we first need to estimate the Jacobian of $Z_f$ with respect to $z$, denoted by $\dfrac{\partial Z_f}{\partial z}(s,t;z)$. Note that $Z_f(s,t;z)$ satisfies
$$Z_f(s,t;z)=z-\int_s^t \Psi_f(\tau,Z_f(\tau,t;z))\mathrm{d}\tau.$$
Thus, we have
\begin{equation}\label{dz}
    \frac{\partial Z_f}{\partial z}(s,t;z)=I_{2d}-\int_s^t\frac{\partial\Psi_f}{\partial z}(\tau,Z_f(\tau,t;z))\cdot\frac{\partial Z_f}{\partial z}(\tau,t;z)\mathrm{d}\tau.
\end{equation}
From the expression of $\Psi_f$ we can get
$$\frac{\partial\Psi_f}{\partial z}(t,z)=\begin{pmatrix}
    0& I_d\\
    -\gamma\int_{\mathbb{R}^d}(v-v_*)\otimes\nabla_xf(t,x,v_*)\mathrm{d}v_*& -\gamma\rho_f(t,x)I_d
\end{pmatrix}.$$
Then we have
$$\left\Vert\frac{\partial\Psi_f}{\partial z}(t)\right\Vert_{C^{\alpha}}\leq C.$$
Thus, it follows from \eqref{dz} that 
$$\left\Vert\frac{\partial Z_f}{\partial z}(s,t; z )\right\Vert_{L^{\infty}}\leq 1+C\int_s^t\left\Vert\frac{\partial Z_f}{\partial z}(s,t; z )\right\Vert_{L^{\infty}}\mathrm{d}\tau.$$
By Gronwall's inequality, we can obtain
$$\left\Vert\frac{\partial Z_f}{\partial z}(s,t;z)\right\Vert_{L^{\infty}}\leq\mathrm{e}^{C(t-s)}.$$
Moreover,
\begin{align*}
\begin{aligned}
 &\left|\frac{\partial Z_f}{\partial z}(s,t;z_1)-\frac{\partial Z_f}{\partial z}(s,t;z_2)\right|\\
  & \hspace{0.5cm}  \leq \int_s^t\left|\frac{\partial\Psi_f}{\partial z}(\tau,Z_f(\tau,t;z_1))\cdot\frac{\partial Z_f}{\partial z}(\tau,t;z_1)-\frac{\partial\Psi_f}{\partial z}(\tau,Z_f(\tau,t;z_2))\cdot\frac{\partial Z_f}{\partial z}(\tau,t;z_2)\right|\mathrm{d}\tau\\
  &  \hspace{0.5cm}   \leq \int_s^t\left|\frac{\partial\Psi_f}{\partial z}(\tau,Z_f(\tau,t;z_1))-\frac{\partial\Psi_f}{\partial z}(\tau,Z_f(\tau,t;z_2))\right|\cdot\left|\frac{\partial Z_f}{\partial z}(\tau,t;z_1)\right|\mathrm{d}\tau+\\
  &  \hspace{0.5cm} \int_s^t\left|\frac{\partial\Psi_f}{\partial z}(\tau,Z_f(\tau,t;z_2))\right|\cdot\left|\frac{\partial Z_f}{\partial z}(\tau,t;z_1)-\frac{\partial Z_f}{\partial z}(\tau,t;z_2)\right|\mathrm{d}\tau\\
  & \hspace{0.5cm}   \leq C(t-s)\mathrm{e}^{C(t-s)}|z_1-z_2|^{\alpha}+C\int_s^t\left|\frac{\partial Z_f}{\partial z}(\tau,t;z_1)-\frac{\partial Z_f}{\partial z}(\tau,t;z_2)\right|\mathrm{d}\tau.
\end{aligned}
\end{align*}
Using Gronwall's inequality again, we obtain
$$\left[\frac{\partial Z_f}{\partial z}(s,t;z)\right]_{\alpha}\leq C(t-s)\mathrm{e}^{C(t-s)}.$$
Then we can use \eqref{h} to get
\begin{align}
\begin{aligned} \label{C-0-2}
 \nabla_zh(t,z) &=\exp{\left(\gamma d\int_0^t\rho_f(s,X_f(s,t;z))\mathrm{d}s\right)} \\
 &\times \Big[ \frac{\partial Z_f}{\partial z}(0,t;z)\cdot\nabla_zf_0(Z_f(0,t;z)) \\
 & +\gamma df_0(Z_f(0,t;z))\int_0^t\frac{\partial Z_f}{\partial z}(s,t;z)\cdot\begin{pmatrix}
    \nabla_x\rho_f(s,X_f(s,t;z)) \\
    0
\end{pmatrix}\mathrm{d}s \Big ].
\end{aligned}
\end{align}
By using the fundamental inequality:
\[ [\varphi\psi]_{\alpha}\leq[\varphi]_{\alpha}\Vert\psi\Vert_{C^0}+\Vert\varphi\Vert_{C^0}[\psi]_{\alpha} \]
repeatedly, we see that 
$$\sup_{t\in[0,T]}\Vert\nabla_zh(t)\Vert_{C^{\alpha}}\leq 2\Vert\nabla_zf_0\Vert_{C^{\alpha}}+1$$
for $T$ small enough. Thus, we have
\[ \Vert h(t)\Vert_{C^{1,\alpha}}\leq M, \quad \mbox{i.e.,} \quad h\in {\mathcal B}. \]
Therefore, we can define the operator $\Gamma: {\mathcal B} \rightarrow {\mathcal B}$ by $\Gamma f=h$. \newline

\noindent $\bullet$~Step C.3 (Constructing the solution):~As in Case B.3, we still have 
$$\Vert\Gamma f-\Gamma g\Vert_{C^0}\leq\frac{1}{2}\Vert f-g\Vert_{C^0}$$
for $T$ small enough. We set 
\[ f^{(0)}(t,z)=f_0(z), \quad f^{(n)}=\Gamma^nf^{(0)} \quad \mbox{for $n\geq 1$ \quad for $(t,z)\in[0,T]\times\mathbb{R}^{2d}$}, \]
Then we have
$$\Vert f^{(n+1)}-f^{(n)}\Vert_{C^0}\leq\frac{1}{2}\Vert f^{(n)}-f^{(n-1)}\Vert_{C^0}, \quad n \geq 1.$$
Thus $\{f^{(n)}\}$ is a Cauchy sequence in $C^0$ and converges to some function $f\in C([0,T]\times\mathbb{R}^{2d})$ with $v$-support contained in $B_R$. Moreover, for a fixed $t\in[0,T]$, since $\Vert f^{(n)}(t)\Vert_{C^{1,\alpha}}\leq M$, by the Ascoli-Arzel\`{a} Theorem, there exists a subsequence $\{f^{(n_k)}(t)\}$ converging to $f(t)$ in $C^1_{\mathrm{loc}}(\mathbb{R}^{2d})$, and we have 
$$|\nabla f^{(n)}(t,z_1)-\nabla f^{(n)}(t,z_2)|\leq [\nabla f^{(n)}(t)]_{\alpha}|z_1-z_2|^{\alpha},\quad\forall z_1,z_2\in\mathbb{R}^{2d}.$$
Letting $n\rightarrow\infty$, we get
$$\Vert f(t)\Vert_{C^{1,\alpha}}\leq\limsup_{k\rightarrow\infty}\Vert f^{(n_k)}(t)\Vert_{C^{1,\alpha}}\leq M,$$
implying $f\in {\mathcal B}$. Thus, we have
$$\Vert\Gamma f-f\Vert_{C^0}\leq\Vert\Gamma f-\Gamma f^{(n)}\Vert_{C^0}+\Vert f^{(n+1)}-f\Vert_{C^0}\leq\frac{1}{2}\Vert f-f^{(n)}\Vert_{C^0}+\Vert f^{(n+1)}-f\Vert_{C^0}\rightarrow 0\quad (n\rightarrow\infty).$$
Hence we get 
\[ \Gamma f=f, \]
i.e. $f$ is a classical solution to \eqref{Eq}. The property \eqref{supp} follows from Lemma ~\ref{desupp1}. This completes the proof.

\begin{remark}\label{rk:difficulty}
    \textup{We cannot obtain the local existence for $C^1$ solutions using the methods above. The reason is, while getting the contraction estimate \eqref{contraction}, although it is a contraction in $C^0$, the constant $C$ depends on the uniform bound $M$ for the Lipschitz constant, which can be seen as the $C^1$-norm of the functions in ${\mathcal B}$. Therefore, it is predictable that when we want to get a similar contraction estimate in $C^1$, we need a uniform bound for the $C^2$-norm of functions in ${\mathcal B}$, which we do not have. For $W^{1,\infty}$ and $C_b^{1,\alpha}$ solutions, we use Lipschitz and Ascoli-Arzel\`{a} arguments respectively to avoid that estimate.}
\end{remark}
In the sequel, we present the uniqueness of $W^{1,\infty}([0,T]\times\mathbb{R}^{2d})$-solution. 
\begin{lemma}\label{estimate}
  Suppose that $f_0\in W^{1,\infty}(\mathbb{R}^{2d})$ is an initial datum satisfying $v$-$\mathrm{supp}(f_0)\subseteq B_{R_0}$, and let $f\in W^{1,\infty}([0,T]\times\mathbb{R}^{2d})$ be a solution to \eqref{C-0} satisfying \eqref{supp}. Then we have
    \begin{equation}\label{W}
        \Vert f(t)\Vert_{W^{1,\infty}}\leq\frac{\Vert f_0\Vert_{W^{1,\infty}}}{1-Ct(1+\Vert f_0\Vert_{W^{1,\infty}})},
    \end{equation}
    where $C$ is some constant depending on $\gamma,d$ and $R_0$.
\end{lemma}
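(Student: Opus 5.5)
We plan to derive a Riccati-type differential inequality for $y(t):=\Vert f(t)\Vert_{W^{1,\infty}}$, of the form $y(t)\le y(0)+C\int_0^t y(s)(1+y(s))\,\ud s$. Comparison with the ODE $\dot Y=CY(1+Y)$ then gives the bound \eqref{W}. Since
\[
Y(t)=\frac{Y_0e^{Ct}}{1+Y_0-Y_0e^{Ct}},
\]
we only need the cruder form stated, which we obtain by working with $y\le y(0)+C\int_0^t(1+y)\,y\,\ud s$. Note that $\dot Y\le C(1+Y_0)Y^2/Y_0$ is not quite the right comparison. Instead, set $u:=1+y$ and note $y(1+y)\le u^2$. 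Then $u\le u_0+C\int_0^t u^2$ yields $u(t)\le u_0/(1-Ctu_0)$, and $y\le y_0/(1-Ctu_0)$ follows similarly by comparison with $\dot y=Cy(1+y_0+\ldots)$; we expect to adjust constants to land exactly on \eqref{W}.

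\textbf{Step 1: use the representation formula.} Since $f$ is a $W^{1,\infty}$ solution with $v$-support in $B_{R_0}$, the characteristic system \eqref{cheq} with field $\Psi_f$ is well posed (Step A.1 discussion). Moreover, $f$ coincides with the solution $h$ of the linear problem \eqref{linear} driven by $f$ itself, so \eqref{h} holds with $h=f$:
\[
f(t,z)=f_0(Z_f(0,t;z))\exp\Big(\gamma d\int_0^t\rho_f(s,X_f(s,t;z))\,\ud s\Big).
\]
Uniqueness of the linear transport problem for Lipschitz fields justifies this identification.

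\textbf{Step 2: bounds on the coefficients in terms of $y(s)$.} Using the support bound \eqref{supp}, we have $\Vert\rho_f(s)\Vert_{W^{1,\infty}}\le CR_0^d\,y(s)$ and $\mathrm{Lip}(\Psi_f(s))\le C(1+y(s))$. The constant $1$ comes from the $v$-component of $\Psi_f$, and $\gamma E[f]$ contributes $CR_0^{d+1}y$. Gronwall applied to the backward characteristics gives
\[
\mathrm{Lip}(Z_f(s,t;\cdot))\le \exp\Big(C\int_s^t(1+y)\Big).
\]

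\textbf{Step 3: the differential inequality.} Differentiating the formula (or equivalently, differentiating the equation along characteristics) is cleaner than comparing $z_1,z_2$ directly. Along a characteristic, $\nabla_z f$ satisfies
\[
\frac{\ud}{\ud s}\nabla f = -(\partial_z\Psi_f)^{T}\nabla f+\gamma d\rho_f\nabla f+\gamma d f\,\nabla_z\rho_f .
\]
This yields $\frac{\ud}{\ud s}|\nabla f|\le C(1+y)|\nabla f|+Cy^2$, while $\frac{\ud}{\ud s}|f|\le Cy|f|$. Taking suprema (rigorously, via the integral form along characteristics, which is valid a.e. for Lipschitz data) gives
\[
y(t)\le y_0+C\int_0^t\big(y+y^2\big)\,\ud s .
\]

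\textbf{Step 4: comparison.} Since $y+y^2\le y(1+y)$, the comparison argument from the first paragraph gives
\[
y(t)\le \frac{y_0}{1-Ct(1+y_0)}
\]
for $t<1/(C(1+y_0))$. This is the statement \eqref{W}.

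The main obstacle is the rigor of differentiating along characteristics when $f$ is merely $W^{1,\infty}$, and arranging the comparison so that the constant is exactly $(1+\Vert f_0\Vert)$. For the first issue we would work with difference quotients $|f(t,z_1)-f(t,z_2)|/|z_1-z_2|$ as in Step B.2, rather than with derivatives.
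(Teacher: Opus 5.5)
Your proposal is correct and is essentially the paper's argument: the representation formula \eqref{rep_f}, the bound $\mathrm{Lip}(\Psi_f)\le C(1+y)$, Gronwall for $\mathrm{Lip}(Z_f)$, and a Riccati-type inequality closed by comparison. The only cosmetic difference is that the paper works with difference quotients, as you propose to do for rigor, and so obtains the multiplicative form $y(t)\le y_0\exp\bigl(C\int_0^t(1+y)\,\ud s\bigr)$ instead of your additive one; that form closes in the same way.

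No adjustment of constants is needed at the end. The comparison you set aside is in fact valid: for $W:=y_0+C\int_0^t y(1+y)\,\ud s$ one has $W\ge y_0$, hence $W'\le C\frac{1+y_0}{y_0}W^2$, which integrates to exactly \eqref{W}. Your two-step route also lands exactly on \eqref{W}: inserting $1+y(s)\le (1+y_0)/(1-Cs(1+y_0))$ into the linear Gronwall inequality for $y$ gives $y(t)\le y_0/(1-Ct(1+y_0))$.
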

\begin{proof} 
Since $f$ is a solution to \eqref{Eq} with initial datum $f_0$, we have
\begin{equation}\label{rep_f}
f(t,z)=f_0(Z_f(0,t;z))\exp{\left(\gamma d\int_0^t\rho_f(s,X_f(s,t;z))\mathrm{d}s\right)}.
\end{equation}
Thus, we have
\begin{equation}\label{sup}
   \Vert f(t)\Vert_{L^{\infty}}\leq\Vert f_0\Vert_{L^{\infty}}\cdot\exp{\left(C\int_0^t\Vert f(s)\Vert_{L^{\infty}}\mathrm{d}s\right)}. 
\end{equation}
It follows from the expression of $\Psi_f$ that
$$\mathrm{Lip}(\Psi_f(t))\leq C(1+\Vert f(t)\Vert_{W^{1,\infty}}). $$
Hence, we have
\begin{align*}
\begin{aligned}
& |Z_f(s,t;z_1)-Z_f(s,t;z_2)| \\
& \hspace{1cm} \leq|z_1-z_2|+\int_s^t|\Psi_f(\tau,Z_f(\tau,t;z_1))-\Psi_f(\tau,Z_f(\tau,t;z_2))|\mathrm{d}\tau\\
& \hspace{1cm} \leq|z_1-z_2|+C\int_s^t(1+\Vert f(\tau)\Vert_{W^{1,\infty}})|Z_f(\tau,t;z_1)-Z_f(\tau,t;z_2)|\mathrm{d}\tau.
\end{aligned}
\end{align*}
By Gronwall's lemma, we get
\begin{equation}\label{lip1}
    \mathrm{Lip}(Z_f(s,t))\leq\exp{\left(C\int_s^t(1+\Vert f(\tau)\Vert_{W^{1,\infty}})\mathrm{d}\tau\right)}.
\end{equation}
From \eqref{rep_f}, \eqref{lip1} and the Mean Value Theorem, we have
\begin{align*}
&|f(t,z_1)-f(t,z_2)| \leq |f_0(Z_f(0,t;z_1))-f_0(Z_f(0,t;z_2))|\exp{\left(\gamma d\int_0^t\rho_f(s,X_f(s,t;z_1))\mathrm{d}s\right)} \\
& \hspace{0.2cm} + |f_0(Z_f(0,t;z_2))|\cdot\left|\exp{\left(\gamma d\int_0^t\rho_f(s,X_f(s,t;z_1))\mathrm{d}s\right)}-\exp{\left(\gamma d\int_0^t\rho_f(s,X_f(s,t;z_2))\mathrm{d}s\right)}\right|\\
&\hspace{0.2cm} =\uppercase\expandafter{\romannumeral1}+\uppercase\expandafter{\romannumeral2}.
\end{align*}
Then we estimate the two terms respectively:
\begin{align*}
    \uppercase\expandafter{\romannumeral1}\leq& \mathrm{Lip}(f_0)\cdot\exp{\left(C\int_0^t(1+\Vert f(s)\Vert_{W^{1,\infty}})\mathrm{d}s\right)}|z_1-z_2|,\\
    \uppercase\expandafter{\romannumeral2}\leq&\Vert f_0\Vert_{L^{\infty}}\cdot\exp{\left(C\int_0^t\Vert f(s)\Vert_{L^{\infty}}\mathrm{d}s\right)}\\
    &\times C\int_0^t\mathrm{Lip}(f(s))\cdot\exp{\left(C\int_s^t(1+\Vert f(\tau)\Vert_{W^{1,\infty}})\mathrm{d}\tau\right)}\mathrm{d}s\cdot|z_1-z_2|\\
    \leq&\Vert f_0\Vert_{L^{\infty}}\cdot\exp{\left(C\int_0^t\Vert f(s)\Vert_{L^{\infty}}\mathrm{d}s\right)}\\
    &\times\int_0^t C(1+\Vert f(s)\Vert_{W^{1,\infty}})\cdot\exp{\left(C\int_s^t(1+\Vert f(\tau)\Vert_{W^{1,\infty}})\mathrm{d}\tau\right)}\mathrm{d}s\cdot|z_1-z_2|\\
    =&\Vert f_0\Vert_{L^{\infty}}\cdot\exp{\left(C\int_0^t\Vert f(s)\Vert_{L^{\infty}}\mathrm{d}s\right)}\cdot\left(\exp{\left(C\int_0^t(1+\Vert f(s)\Vert_{W^{1,\infty}})\mathrm{d}s\right)}-1\right)|z_1-z_2|.
\end{align*}
Thus, we have
\begin{align}\label{lip2}
\begin{aligned}
& \mathrm{Lip}(f(t)) \leq\mathrm{Lip}(f_0)\cdot\exp{\left(C\int_0^t(1+\Vert f(s)\Vert_{W^{1,\infty}})\mathrm{d}s\right)} \\
& \hspace{0.2cm} + \Vert f_0\Vert_{L^{\infty}}\cdot\exp{\left(C\int_0^t\Vert f(s)\Vert_{L^{\infty}}\mathrm{d}s\right)}\cdot\left(\exp{\left(C\int_0^t(1+\Vert f(s)\Vert_{W^{1,\infty}})\mathrm{d}s\right)}-1\right).
\end{aligned}
\end{align}
We combine \eqref{sup} and \eqref{lip2} to get the integral form of Gronwall-type inequality:
$$\Vert f(t)\Vert_{W^{1,\infty}}\leq\Vert f_0\Vert_{W^{1,\infty}}\exp{\left(C\int_0^t(1+\Vert f(s)\Vert_{W^{1,\infty}})\mathrm{d}s\right)}.$$
This yields
$$\Vert f(t)\Vert_{W^{1,\infty}}\leq\frac{\Vert f_0\Vert_{W^{1,\infty}}}{1-Ct(1+\Vert f_0\Vert_{W^{1,\infty}})}.$$
\end{proof} 
Now, we are ready to provide the uniqueness result for our equation \eqref{Eq}. 
\begin{proposition}\label{uniqueness}
Suppose that the initial datum $f_0\in W^{1,\infty}(\mathbb{R}^{2d})$ satisfies $v$-$\mathrm{supp}(f_0)\subseteq B_{R_0}$, and let $f,g\in W^{1,\infty}([0,T]\times\mathbb{R}^{2d})$ be two solutions to \eqref{C-0} with the same initial datum $f_0$. Furthermore suppose that $f$ and $g$ both satisfy the assumptions in Lemma \textup{~\ref{estimate}}. Then we have
\[  f \equiv g. \]
\end{proposition}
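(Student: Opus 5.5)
The idea is to reuse the $C^0$ contraction estimate \eqref{contraction} from Step B.3, now applied to two genuine solutions instead of elements of ${\mathcal B}$, and then close with a Gronwall argument rather than with smallness of $T$.

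First we fix a common bound. By Lemma~\ref{estimate}, on a short interval $[0,T_1]$ with $CT_1(1+\Vert f_0\Vert_{W^{1,\infty}})\le 1/2$ we have
\[
\sup_{t\in[0,T_1]}\bigl(\Vert f(t)\Vert_{W^{1,\infty}}+\Vert g(t)\Vert_{W^{1,\infty}}\bigr)\le 4\Vert f_0\Vert_{W^{1,\infty}}=:M.
\]
Both $v$-supports lie in $B_{R_0}$ by \eqref{supp}. Consequently $\rho_f$, $\rho_g$, $\mathrm{Lip}(\rho_f)$ and $\mathrm{Lip}(\Psi_f)$ are bounded on $\mathbb{R}^d\times B_{R_0}$ by a constant $C=C(\gamma,d,R_0,M)$. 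Since $f$ and $g$ solve \eqref{C-0}, each equals its own representation \eqref{h}, namely $f=\Gamma f$ and $g=\Gamma g$, with characteristics $Z_f$ and $Z_g$.

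Next we set $D(t):=\sup_{s\le t}\Vert f(s)-g(s)\Vert_{C^0}$ and redo the estimates of Step B.3 while keeping the time dependence explicit.

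\emph{Characteristics.} We have
\[
|\Psi_f(\tau,z)-\Psi_g(\tau,z)|\le C\Vert f(\tau)-g(\tau)\Vert_{C^0}\quad\text{for } |v|\le R_0,
\]
using the compact $v$-support in the integral defining $E$. By Gronwall's inequality,
\[
|Z_f(s,t;z)-Z_g(s,t;z)|\le C\int_s^t\Vert f(\tau)-g(\tau)\Vert_{C^0}\,\mathrm{d}\tau\le C(t-s)D(t).
\]
This only requires restricting to $z$ with $|v|\le R_0$. Outside that region both $f$ and $g$ vanish; alternatively, the characteristics starting outside stay outside the supports.

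\emph{Representation formulas.} We then split $f(t,z)-g(t,z)$ into the same three terms as in \eqref{contraction}. The term involving $f_0$ contributes $\mathrm{Lip}(f_0)\,|Z_f-Z_g|$. The exponential terms contribute
\[
C\int_0^t\bigl(\mathrm{Lip}(\rho_f)\,|X_f-X_g|+\Vert\rho_f-\rho_g\Vert_{C^0}\bigr)\,\mathrm{d}s.
\]
Combining these gives
\[
\Vert f(t)-g(t)\Vert_{C^0}\le C\int_0^t D(s)\,\mathrm{d}s,
\]
so $D(t)\le C\int_0^tD$, and Gronwall yields $D\equiv0$ on $[0,T_1]$.

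Finally, to cover all of $[0,T]$, we use a continuation argument. Let $t^*$ be the supremum of times up to which $f=g$. At $t^*$ the common datum $f(t^*)$ lies in $W^{1,\infty}$ with $v$-support in $B_{R_0}$, so the same argument restarts there. The constant in Lemma~\ref{estimate} applied from $t^*$ depends on $\Vert f(t^*)\Vert_{W^{1,\infty}}$, but this is bounded uniformly on $[0,T]$ because $f,g\in W^{1,\infty}([0,T]\times\mathbb{R}^{2d})$. Alternatively, one can simply take $M:=\sup_{[0,T]}(\Vert f\Vert_{W^{1,\infty}}+\Vert g\Vert_{W^{1,\infty}})<\infty$ directly and run the Gronwall argument on all of $[0,T]$ at once.

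The main obstacle is the characteristic difference estimate. It must be written with $\int_s^t\Vert f(\tau)-g(\tau)\Vert_{C^0}\,\mathrm{d}\tau$ rather than $T\Vert f-g\Vert_{C^0}$, so that a Gronwall argument closes without any smallness of $T$. It also needs Lipschitz control of only one of the two fields, say $\Psi_f$, which is exactly what the uniform $W^{1,\infty}$ bound provides.
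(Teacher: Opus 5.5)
Your proof is correct and is essentially the paper's argument. Both rely on $f=\Gamma f$, $g=\Gamma g$, the stability estimates behind \eqref{contraction}, and a sup-of-agreement-time continuation. The paper closes each local step with the $\tfrac12$-contraction on a short interval $[t_0,t_0+\varepsilon]$, while you close with Gronwall; and, as you note, taking $M$ directly from $f,g\in W^{1,\infty}([0,T]\times\mathbb{R}^{2d})$ makes Lemma~\ref{estimate} and the continuation step unnecessary.

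One small correction: backward characteristics issued from $|v|\le R_0$ need not stay in $\{|v|\le R_0\}$. Already in the homogeneous case they expand like $\mathrm{e}^{\gamma m(t-s)}$ around $\overline{v}$. So your constants for $\Psi_f-\Psi_g$ and $\mathrm{Lip}(\Psi_f)$ should be taken on $\mathbb{R}^d\times B_{R'}$ with $R'=2R_0\mathrm{e}^{CT}$. This bound follows from $|\dot V|\le\gamma\Vert\rho_f\Vert_{L^\infty}(|V|+R_0)$.
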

\begin{proof} 
We define
\[ t_0:=\sup \Big \{t\in[0,T]|~f(s)=g(s),\forall s\in[0,t] \Big \}. \]
Then, we have 
\[ f(t_0)=g(t_0)\in W^{1,\infty}(\mathbb{R}^{2d}) \quad \mbox{with $v$-support contained in $B_R$}. \]
Assume that $t_0<T$. Then by Proposition~\ref{estimate}, for $\varepsilon$ small enough, $\tilde{f}:=f|_{[t_0,t_0+\varepsilon]}$ and $\tilde{g}:=g|_{[t_0,t_0+\varepsilon]}$ belong to the space ${\mathcal B}$ in Section \ref{sec:3.1.1}~(with $R$ and $M$ modified a bit). Thus
$$\Vert\tilde{f}-\tilde{g}\Vert_{C^0}=\Vert\Gamma\tilde{f}-\Gamma\tilde{g}\Vert_{C^0}\leq\frac{1}{2}\Vert\tilde{f}-\tilde{g}\Vert_{C^0},$$
implying that $\tilde{f}=\tilde{g}$, which contradicts the definition of $t_0$. Therefore, we must have
$$f(t)=g(t),\quad\forall t\in[0,T].$$
\end{proof} 
Finally, we combine materials in Section \ref{sec:3.1.1} and Section \ref{sec:3.1.2} together with Proposition \ref{uniqueness} respectively to conclude the proofs for Theorem~\ref{W1infty} and Theorem~\ref{Cb1alpha}.

\subsection{Global well-posedness} \label{sec:3.2}
In this subsection, we consider the case when the initial data $f_0$ is nonnegative and compactly supported in both $x$ and $v$ variables.
After showing (i) the control of support, (ii) differential inequalities for norm of $f$ and gradients of $f$, we present a global well-posedness result for \eqref{C-0}, provided that the interaction constant $\gamma$ is small enough.
\begin{theorem}\label{global2}
For $d\geq 2$ and $T \in (0, \infty)$, let $f_0\in W^{1,\infty}(\mathbb{R}^{2d})$ be a nonnegative function with $\mathrm{supp}(f_0)\subseteq B_{R_0}\times B_{R_0}$ for some $R_0>0$. Then, the following assertions hold.
\begin{enumerate}
\item
There exists a positive constant $\gamma_0$ depending on $d, R_0$ and $\Vert f_0\Vert_{W^{1,\infty}}$, such that for $\gamma<\gamma_0$, there is a global nonnegative solution $f \in W^{1,\infty}([0,T]\times\mathbb{R}^{2d})$ to \eqref{C-0} satisfying
      \begin{equation}\label{2}
        v\textup{-}\mathrm{supp}(f(t)) \subseteq B_{R(t)},\quad\forall~t \in [0, T),
    \end{equation}
    for some nonincreasing function $R:[0,+\infty)\rightarrow[0,+\infty)$. 
    \vspace{0.1cm}
 \item   
The solution in the first assertion is unique among the family of solutions satisfying \eqref{2}.
\end{enumerate}
\end{theorem}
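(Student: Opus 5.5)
The plan is a continuation argument: the local theory of Theorem~\ref{W1infty} is extended step by step, and the extension is controlled by an a priori bound on $\Vert f(t)\Vert_{W^{1,\infty}}$ that stays finite on $[0,T]$ once $\gamma$ is small. The local existence time in Section~\ref{sec:3.1.1} depends only on $\gamma,d$, the $v$-support radius and the $W^{1,\infty}$ norm. So it suffices to show that, for $\gamma<\gamma_0$, any solution on $[0,T^*)$ with $T^*\le T$ satisfies a bound $\sup_{t<T^*}\Vert f(t)\Vert_{W^{1,\infty}}\le K$ with $K$ independent of $T^*$. Uniqueness in the class \eqref{2} then follows from Proposition~\ref{uniqueness}, applied on each subinterval, since the $v$-support stays in $B_{R_0}$.

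First I control the supports. Lemma~\ref{desupp1} gives $v\textup{-}\mathrm{supp}(f(t))\subseteq B_{R_0}$, and one may take $R(t)\equiv R_0$, or the sharper nonincreasing radius obtained from the convexity argument. Since $\dot X=V$ with $|V|\le R_0$, the $x$-support satisfies $x\textup{-}\mathrm{supp}(f(t))\subseteq B_{R_0+R_0t}$. Hence on $[0,T]$ everything lives in a fixed compact set $K_T:=B_{R_0(1+T)}\times B_{R_0}$.

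Next I bound the density. The difficulty is that $\rho_f\lesssim R_0^d\Vert f\Vert_{L^\infty}$, and the representation \eqref{rep_f} gives the Riccati-type growth $\Vert f(t)\Vert_{L^\infty}\le \Vert f_0\Vert_{L^\infty}\exp(\gamma C\int_0^t\Vert f\Vert_{L^\infty})$, whose explicit solution $\Vert f_0\Vert_{L^\infty}/(1-\gamma C\Vert f_0\Vert_{L^\infty} t)$ stays finite up to $T$ provided $\gamma C\Vert f_0\Vert_{L^\infty}T<1$. With this route $\gamma_0$ depends on $T$ as well. To obtain a $T$-independent $\gamma_0$, as the statement suggests, I would instead use dispersion in $x$, and this is where $d\ge 2$ should enter. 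Along characteristics $V$ is nearly constant when $\gamma$ is small, so $X(s)\approx x-(t-s)v$. Then $\int_0^t\rho_f(s,X(s))\,ds$ can be bounded by a quantity like $\int_0^\infty \min(\Vert f\Vert_\infty, \Vert f_0\Vert_{L^1_xL^\infty_v}s^{-d})\,ds$ via the standard dispersion estimate $\rho(s,x)\lesssim s^{-d}\sup_x\int f_0(x-sv,v)\,dv$-type bounds for free transport. Because the velocity support shrinks only in one direction and the backward characteristics are perturbed, I would run a bootstrap: assume $\rho_f(s,x)\le 2A\min(1,(1+s)^{-d})$, deduce that the characteristic velocities vary by $O(\gamma)$, and recover the bound with constant $A$ for $\gamma$ small. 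Integrability of $(1+s)^{-d}$ needs only $d\ge2$ if we also use the Jacobian of the characteristic map being close to that of free flow. This yields $\gamma d\int_0^t\rho_f\le C\gamma$ uniformly, hence a uniform $L^\infty$ bound.

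Finally I bound the gradient. I would redo the Gronwall estimate of Lemma~\ref{estimate} using $\mathrm{Lip}(\Psi_f)\le C(1+\gamma\Vert f\Vert_{W^{1,\infty}})$. The linear part $\begin{pmatrix}0&I\\0&0\end{pmatrix}$ gives only polynomial growth $(1+t)$ of the Jacobian, so the Lipschitz bound becomes linear in $\mathrm{Lip}(f)$ with a coefficient of order $\gamma$, producing a finite bound on $[0,T]$ for $\gamma<\gamma_0$. The main obstacle is the dispersive bootstrap for $\rho_f$, in particular controlling $\nabla_x\rho_f$ in $\partial\Psi_f/\partial z$ without losing time decay. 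If this fails, I fall back on the $T$-dependent smallness $\gamma_0=\gamma_0(d,R_0,\Vert f_0\Vert_{W^{1,\infty}},T)$, which still proves existence on the prescribed finite horizon $[0,T]$.
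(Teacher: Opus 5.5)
\textbf{There is a genuine gap: the proposal does not establish a $\gamma_0$ independent of $T$, which is the actual content of the theorem.}

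Your continuation framework matches the paper. So does the uniqueness step: Proposition~\ref{uniqueness} on subintervals, with the $v$-support kept in $B_{R_0}$ by Lemma~\ref{desupp1}. The problem lies in the a priori bounds:
\begin{itemize}
\item Your fallback explicitly gives $\gamma_0=\gamma_0(d,R_0,\Vert f_0\Vert_{W^{1,\infty}},T)$, which proves only a weaker statement.
\item Your gradient step also closes only on a $\gamma$-dependent time horizon. The $\partial_{v_i}$-equation contains the shear source $-\partial_{x_i}f$, so $\Vert\nabla_vf\Vert_{L^\infty}$ grows like $t$ even for free transport. Without time decay of $\partial_xE$, the term $\gamma\,\partial_{x_i}E\cdot\nabla_vf$ in the $\partial_{x_i}$-equation makes the estimate quadratic in the gradients, not linear with an $O(\gamma)$ coefficient. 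It therefore survives only up to a time that tends to infinity as $\gamma\to0$.
\item The dispersive bootstrap meant to remove the $T$-dependence is only sketched. In the form you propose (an $L^1_xL^\infty_v$ dispersion bound plus closeness of the characteristic Jacobian to free flow), it needs control of $\nabla_x\rho_f$, i.e.\ of $\nabla_xf$. That is exactly the estimate you flag as unresolved.
\end{itemize}

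\textbf{The missing idea is an exact, non-perturbative confinement of the support to the free-transport image of the initial support.} Your bound $B_{R_0(1+t)}\times B_{R_0}$ is too crude, because its $v$-slices do not shrink. The paper (Lemma~\ref{desupp}) proves
\[
\mathrm{supp}\,f(t)\subseteq Q(t)=\{(x+vt,v):(x,v)\in B_{R_0}\times B_{R_0}\}.
\]
It does so by showing that $S(t)=\sup\{|x-vt|:(x,v)\in\mathrm{supp}\,f(t)\}$ is nonincreasing. Along characteristics $\frac{\mathrm{d}}{\mathrm{d}t}(X-tV)=\gamma tE[f]$. At a maximizer $(x_0,v_0)$, set $a=x_0-t_0v_0$ and $b_*=x_0-t_0v_*$; then
\[
\frac{\mathrm{d}}{\mathrm{d}t}|X-tV|^2=-2\gamma\int_{\mathbb{R}^d}a\cdot(a-b_*)\,f(t_0,x_0,v_*)\,\mathrm{d}v_*\le 0,
\]
since $|b_*|\le|a|$ on the support.

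Consequently every $v$-slice has diameter at most $h(t)=\min\{2R_0,2R_0/t\}$. This gives at once, with no bootstrap and no Jacobian control:
\[
\rho_f\le C\Vert f\Vert_{L^\infty}h^d,\qquad |E|\le C\Vert f\Vert_{L^\infty}h^{d+1},
\]
\[
|\partial_xE|\le C\Vert\nabla_xf\Vert_{L^\infty}h^{d+1},\qquad |\nabla_x\rho_f|\le C\Vert\nabla_xf\Vert_{L^\infty}h^{d}.
\]
Set $F=\Vert f\Vert_{L^\infty}$, $A=\Vert\nabla_xf\Vert_{L^\infty}$ and $B=\Vert\nabla_vf\Vert_{L^\infty}$. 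These satisfy
\[
F'\le C\gamma h^dF^2,\qquad A'\le C\gamma h^dA+C\gamma h^{d+1}AB,\qquad B'\le A+C\gamma h^dB.
\]
The inequality for $F$ closes uniformly in time because $\int_0^\infty h^d\,\mathrm{d}s<\infty$ when $d\ge2$. The coupled system for $A$ and $B$ closes through Lemma~\ref{Gronwall}, giving $A\le C$ and $B\le C(1+t)$, because $\int_0^\infty(1+s)h^{d+1}\,\mathrm{d}s<\infty$ when $d\ge2$.

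It is the $h^{d+1}$ decay of $\partial_xE$ that defeats the linear growth of $\Vert\nabla_vf\Vert_{L^\infty}$. This is the second place where $d\ge2$ genuinely enters, and your proposal does not capture it.
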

\begin{proof}
Since the proof is very lengthy and need several preparatory estimates, we leave its proof in Section \ref{sec:3.2.2}.
\end{proof}

\subsubsection{Preparatory estimates} \label{sec:3.2.1}
In this part, we present a series of estimates to be employed in the proof of Theorem \ref{global2}. First, we show that the support of the solution can be controlled by the time evolution of the initial support under the free  transport flow in the following lemma.

\begin{lemma}\label{desupp}
Suppose that $f_0\in W^{1,\infty}(\mathbb{R}^{2d})$ is a nonnegative initial datum with $\mathrm{supp}(f_0)\subseteq B_{R_0}\times B_{R_0}=:Q_0$ for some $R_0>0$. And let $f\in W^{1,\infty}([0,T]\times\mathbb{R}^{2d})$ be a nonnegative solution to \eqref{C-0}. If we define
\begin{equation} \label{NN-1}
Q(t)=\left\{(x+vt,v)|~(x,v)\in Q_0\right\}, \quad \mbox{for $t\in[0,T]$},
\end{equation}
then we have
\[ \mathrm{supp}(f(t))\subseteq Q(t) \quad \mbox{for any $t\in[0,T]$}. \]
\end{lemma}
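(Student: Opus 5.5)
Since $f$ is a $W^{1,\infty}$ solution with compact $v$-support, the representation formula \eqref{rep_f} gives
\[
f(t,z)=f_0(Z_f(0,t;z))\exp\Big(\gamma d\int_0^t\rho_f(s,X_f(s,t;z))\,\ud s\Big).
\]
The exponential factor is strictly positive. Hence $f(t,z)\neq 0$ forces $Z_f(0,t;z)\in\mathrm{supp}(f_0)\subseteq Q_0$. So it suffices to show that every characteristic starting in $Q_0$ stays in $Q(t)$, i.e. that the forward flow $z_0\mapsto Z_f(t,0;z_0)$ maps $Q_0$ into $Q(t)$. Before that, I will justify the formula: by Lemma \ref{desupp1} the $v$-support of $f(t)$ stays in $B_{R_0}$, so $\Psi_f$ is Lipschitz in $z$ and the characteristics are well defined. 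Since $\mathrm{supp} f(t)$ is closed, it is also enough to check that the open set of nonvanishing points lies in $Q(t)$, which is closed.

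\textbf{Step 1: the velocity ball is invariant along characteristics.} Write $E[f]=\rho(V-u)$, where $u$ is the local mean velocity of the probability measure $\rho^{-1}f(t,x,\cdot)\,\ud v$. That measure is supported in $B_{R_0}$ by Lemma \ref{desupp1}, so $|u|\le R_0$. Along a characteristic, $\dot V=-\gamma\rho(X)(V-u(X))$. Then
\[
\frac{\ud}{\ud s}|V|^2=-2\gamma\rho(|V|^2-V\cdot u)\le -2\gamma\rho\,|V|(|V|-R_0),
\]
so $|V(s)|\le R_0$ for all $s$ if $|V(0)|\le R_0$. Where $\rho=0$ the right-hand side vanishes. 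In fact $V(s)$ is a convex-combination-type relaxation toward $u$, so it stays in the ball.

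\textbf{Step 2: the position coordinate stays in $Q(t)$.} A point $(y,w)$ lies in $Q(t)$ iff $|w|\le R_0$ and $|y-wt|\le R_0$. With $X(0)=x_0$, $|x_0|\le R_0$, I need $|X(t)-V(t)t|\le R_0$. Set $Y(s)=X(s)-sV(s)$; then
\[
\dot Y=-s\dot V=\gamma s\rho(X)(V-u(X)).
\]
This has no evident sign, and it is the main obstacle. I will not try to bound $Y$ directly; instead I plan to use the structure of the support itself: show $u(s,x)$ is compatible with $Q(s)$. The slice $\{w:(x,w)\in Q(s)\}=B_{R_0}\cap B((x)/s\,,R_0/s)$ (for $s>0$) is convex. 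If $\mathrm{supp} f(s)\subseteq Q(s)$ at time $s$, then $u(s,x)$, a weighted average of velocities in that convex slice, lies in it too.

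\textbf{Step 3: bootstrap via a convex-region argument.} The relaxation $\dot V=-\gamma\rho(V-u)$ pulls $V$ toward a point $u$ in the same $x$-slice of $Q(s)$. Combined with free transport $\dot X=V$, I claim $Q(s)$ is forward invariant: at a boundary point with $|X-sV|=R_0$, the outward normal derivative of $|X-sV|^2$ is $-2s\,(X-sV)\cdot\dot V=-2\gamma s\rho(X-sV)\cdot(u-V)$. Since $u$ lies in the convex slice where $|X-su|\le R_0$, convexity of the ball gives $(X-sV)\cdot(u-V)\cdot(-s)\le 0$, i.e. the derivative is $\le0$. The boundary $|V|=R_0$ is handled as in Step 1. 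To make this rigorous without circularity, I will either run a continuity (first exit time) argument on $\sup$ of $\Phi(\mathrm{dist}((x,v),Q(t)))f$ in the spirit of the proof of Lemma \ref{desupp1}, differentiating $\int \Phi(\max\{|v|,|x-vt|\}) f\,\ud x\,\ud v$ with a convex nondecreasing $\Phi$ vanishing on $[0,R_0]$ and using Chebyshev as there, or slightly enlarge $R_0$ to $R_0+\delta$ to gain strict inward pointing and let $\delta\to0$. The transport term then cancels exactly, because $\max\{|v|,|x-vt|\}$ is invariant under free streaming. This moment route is the one I expect to work most cleanly, with the sign of the alignment term being the delicate estimate.
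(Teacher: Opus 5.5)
Your Step~3 argument along characteristics is circular, as you suspect. Comparing $|X-sV|$ with the fixed radius $R_0$ requires $u(s,x)$ to lie in the slice of $Q(s)$, and that is the conclusion. A first-exit-time argument against $R_0$, or against $R_0+\delta$, does not repair it: at the exit time the alignment contribution is only $\le 0$, not strictly negative, so there is no contradiction.

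The paper avoids the circularity by comparing with the \emph{current} extent of the support. It sets $S(t)=\sup\{|x-vt|:(x,v)\in\mathrm{supp}\, f(t)\}$, and $R(t)$ likewise, and notes these are Lipschitz. At a differentiability point $t_0$ it differentiates $|X-tV|^2$ along a characteristic realizing $S(t_0)$. The identity $t_0(v_0-v_*)=(x_0-t_0v_*)-(x_0-t_0v_0)$ together with $|x_0-t_0v_*|\le S(t_0)$ gives $S'(t_0)\le 0$. This is exactly your boundary computation with $R_0$ replaced by $S(t_0)$.

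The moment route you settle on is correct, needs no bootstrap, and is a genuinely different proof. The sign you call delicate is short:
\begin{itemize}
\item By Lemma~\ref{desupp1} you may drop $|v|$ from the max and take $M_\Phi(t)=\int\Phi(|x-vt|)f\,\mathrm{d}x\,\mathrm{d}v$.
\item The transport terms cancel.
\item Set $y=x-vt$ and $\bar y=x-ut=\int y\,\mathrm{d}\mu_x(v)$. Then $-t(v-u)=y-\bar y$, so
\[
\frac{\mathrm{d}}{\mathrm{d}t}M_\Phi(t)=-\gamma\int_{\mathbb{R}^d}\rho^2\int_{\mathbb{R}^d}\Phi'(|y|)\,\frac{y}{|y|}\cdot(y-\bar y)\,\mathrm{d}\mu_x(v)\,\mathrm{d}x\le 0 .
\]
The inequality follows from Cauchy--Schwarz, then Chebyshev ($\Phi'(|y|)$ and $|y|-|\bar y|$ are comonotone), then $\int|y|\,\mathrm{d}\mu_x\ge|\bar y|$, exactly as in Lemma~\ref{desupp1}.
\end{itemize}
If you keep $m=\max\{|v|,|x-vt|\}$, replace Cauchy--Schwarz by the subgradient inequality $\nabla_v m\cdot(v-u)\ge m(v)-m(u)$ and the last step by Jensen, since $m$ is convex in $v$.

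To finish: $M_\Phi(0)=0$, so $M_\Phi\equiv 0$. By continuity $f(t)=0$ on $\{|x-vt|>R_0\}$, and since $Q(t)$ is closed the claim follows. Your reduction to characteristics and Steps~1--2 then become unnecessary.

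Comparing the two routes:
\begin{itemize}
\item The paper's argument is pointwise and directly gives monotonicity of the actual support radii. It pays for this with some technical care: existence of a maximizing characteristic, propagation of non-vanishing along characteristics, and one-sided difference quotients at almost every $t$.
\item Your argument is a single integration by parts. It puts this lemma on the same footing as Lemma~\ref{desupp1}, with $|v|$ replaced by the free-streaming invariant $|x-vt|$. Restarting at any $t_1$ with $\Phi$ vanishing on $[0,S(t_1)]$ also recovers the monotonicity of $S$.
\end{itemize}
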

\begin{proof}
We define
\[  R(t)\triangleq\sup\left\{|v|\big|v\in v\textup{-}\mathrm{supp}(f(t))\right\}, \quad S(t)\triangleq\sup\left\{|x-vt|\big|(x,v)\in\mathrm{supp}(f(t))\right\}. \]
Then it suffices to prove that $R(t)$ and $S(t)$ are both non-increasing functions with respect to $t$. Actually the monotonicity of $R(t)$ follows directly from Lemma ~\ref{desupp1}, but here we give a different proof provided that the initial datum is compactly supported.\newline

\noindent $\bullet$ Step D.1 (Almost everywhere differentiability of $R$ and $S$):~Since $f$ is a $W^{1,\infty}$ function with compact $v$-support, the vector field $\Psi_f$ is bounded in $[0,T]\times\mathbb{R}^d\times B_R$, which means the characteristic curve is spreading in a bounded velocity. It follows that $S(t)$ is also finite for $t\in[0,T]$, and that both $R(t)$ and $S(t)$ are Lipschitz continuous, thus differentiable almost everywhere.

\vspace{0.2cm}

\noindent $\bullet$ Step D.2 ($R(t)$ is non-increasing):~We fix $t_0\in(0,T)$ such that $R(t)$ is differentiable at $t_0$. Since $f_0$ has compact support, there exists a characteristic curve $Z_f(t,t_0;z_0)(z_0=(x_0,v_0))$ such that 
\[ |V_f(t_0,t_0;z_0)|=|v_0|=R(t_0). \]
By the definition of $R(t)$, 
\[ |V_f(t,t_0;z_0)|\leq R(t) \quad \mbox{for any $t\in(0,T)$}. \] 
Hence, we have
$$\frac{R^2(t)-R^2(t_0)}{t-t_0}\geq\frac{|V_f(t,t_0;z_0)|^2-|V_f(t_0,t_0;z_0)|^2}{t-t_0},\quad\forall~ t\in(t_0,T),$$
$$\frac{R^2(t)-R^2(t_0)}{t-t_0}\leq\frac{|V_f(t,t_0;z_0)|^2-|V_f(t_0,t_0;z_0)|^2}{t-t_0},\quad\forall~ t\in(0,t_0).$$
Letting $t\rightarrow t_0$ in two inequalities above, we get
\begin{align*}
    2R(t_0)R'(t_0)&=\frac{\mathrm{d}}{\mathrm{d}t}\Big|_{t=t_0}R^2(t) =\frac{\mathrm{d}}{\mathrm{d}t}\Big|_{t=t_0}|V_f(t,t_0;z_0)|^2 =-2\gamma v_0\cdot E[f](t_0,z_0)\\
    &=-2\gamma\int_{\mathbb{R}^d}v_0\cdot(v_0-v_*)f(t_0,x_0,v_*)\mathrm{d}v_* \\
    &\leq-2\gamma\int_{\mathbb{R}^d}(|v_0|^2-|v_0||v_*|)f(t_0,x_0,v_*)\mathrm{d}v_* \leq 0.
\end{align*}
Here we used the fact that $Z_f(t,t_0;z_0)$ is the solution to $\eqref{cheq}$ and that $|v_*|\leq|v_0|=R(t_0)$ for any $v_*$ in the $v$-support of $f(t_0)$. Thus we have
\[ R'(t_0)\leq 0, \]
which implies that $R(t)$ is non-increasing.

\vspace{0.2cm}

\noindent $\bullet$ Step D.3 ($S(t)$ is non-increasing):~Similarly, if we fix a differentiable point $t_0\in(0,T)$ of $S(t)$ and suppose that $|x-vt_0|$ reaches maximum at $z_0=(x_0,v_0)\in\mathrm{supp}(f(t_0))$, then we have
\begin{align*}
    2S(t_0)S'(t_0)&=\frac{\mathrm{d}}{\mathrm{d}t}\Big|_{t=t_0}S^2(t) =\frac{\mathrm{d}}{\mathrm{d}t}\Big|_{t=t_0}|X_f(t,t_0;z_0)-tV_f(t,t_0;z_0)|^2\\
    &=2(x_0-t_0v_0)\cdot(v_0-v_0+\gamma t_0E[f](t_0,z_0))\\
    &=2\gamma t_0\int_{\mathbb{R}^d}(x_0-t_0v_0)\cdot(v_0-v_*)f(t_0,x_0,v_*)\mathrm{d}v_* \\
    &=-2\gamma\int_{\mathbb{R}^d}(x_0-t_0v_0)\cdot\left[(x_0-t_0v_0)-(x_0-t_0v_*)\right]f(t_0,x_0,v_*)\mathrm{d}v_*\leq 0.
\end{align*}
This yields that $S(t)$ is also non-increasing.
\end{proof}
In the next lemma, we present coupled Gronwall-type differential inequalities to be useful in the proof of a global well-posedness.
\begin{lemma}\label{Gronwall}
Suppose that $a, b:[0,+\infty)\rightarrow [0,+\infty)$ be two measurable functions with
    $$\varepsilon:=\int_0^{+\infty} a(s)\mathrm{d}s<+\infty, \qquad\delta:=\int_0^{+\infty}(1+s) b(s)\mathrm{d}s<+\infty, $$
and let $A, B:[0,+\infty)\rightarrow [0,+\infty)$ be two absolutely continuous functions satisfying
    \begin{equation} 
    \begin{cases} \label{A}
    \displaystyle A'(t)\leq a(t) A(t)+b(t)A(t)B(t), \quad \mbox{a.e.}~t \in [0, \infty), \\
    \displaystyle  B'(t)\leq A(t)+a(t)B(t),
    \end{cases}
    \end{equation}
Then, there exists a positive constant $C = C(A(0), B(0))$ such that 
\[ A(t)\leq C, \quad  B(t)\leq C(1+t), \]
 provided that $\varepsilon$ and $\delta$ are small enough.
\end{lemma}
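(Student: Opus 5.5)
We would prove Lemma~\ref{Gronwall} by a continuity (bootstrap) argument. First, remove the linear terms with integrating factors. Set $\Lambda(t):=\exp\bigl(-\int_0^t a(s)\ud s\bigr)$, so that $\mathrm{e}^{-\varepsilon}\le\Lambda(t)\le 1$, and define $\tilde A:=\Lambda A$ and $\tilde B:=\Lambda B$. The system \eqref{A} then becomes
\[
\tilde A'(t)\le b(t)\,\tilde A(t)\,B(t),\qquad \tilde B'(t)\le \tilde A(t),
\]
which gives
\[
A(t)\le \mathrm{e}^{\varepsilon}A(0)\exp\Bigl(\int_0^t b(s)B(s)\ud s\Bigr),\qquad
B(t)\le \mathrm{e}^{\varepsilon}\Bigl(B(0)+\int_0^t A(s)\ud s\Bigr).
\]

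Next, fix the bootstrap constant $C:=2\mathrm{e}(A(0)+B(0)+1)$ and assume $\varepsilon\le\log 2$. Consider the set of times $t\ge0$ such that
\[
A(s)\le C \quad\text{and}\quad B(s)\le C(1+s)\quad\text{for all } s\in[0,t].
\]
This set contains $0$ (since $C>A(0),B(0)$) and is closed by continuity of $A$ and $B$. We need to show it is open, and we do so by strictly improving both bounds on it.

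On this interval, the second estimate gives
\[
B(t)\le \mathrm{e}^{\varepsilon}\bigl(B(0)+Ct\bigr).
\]
This is the delicate point: the factor in front of $Ct$ is $\mathrm{e}^{\varepsilon}>1$, so it does not improve $C(1+t)$ for large $t$. To close, we feed the improved bound on $A$ back into $B$. The first estimate yields
\[
A(t)\le \mathrm{e}^{\varepsilon}A(0)\exp\Bigl(C\int_0^t(1+s)b(s)\ud s\Bigr)\le \mathrm{e}^{\varepsilon}A(0)\,\mathrm{e}^{C\delta}.
\]
If $\delta\le 1/C$, this gives $A(t)\le \mathrm{e}^{1+\varepsilon}A(0)\le 2\mathrm{e}A(0)=:A_*$. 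Then
\[
B(t)\le \mathrm{e}^{\varepsilon}\bigl(B(0)+A_*t\bigr)\le 2B(0)+2A_*t.
\]
Now $A_*=2\mathrm{e}A(0)<C$, and $2B(0)<C$, but $2A_*=4\mathrm{e}A(0)$ is not below $C$. We therefore enlarge the constant to $C:=8\mathrm{e}(A(0)+B(0)+1)$. The resulting bounds are strict: $A\le 2\mathrm{e}A(0)<C$, and $B\le 2B(0)+4\mathrm{e}A(0)t<C(1+t)$. By continuity the bounds persist slightly beyond $t$, so the set is open. Hence it equals $[0,\infty)$.

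The smallness requirements are $\varepsilon\le\log 2$ and $\delta\le 1/C$, where $C$ depends only on $A(0)$ and $B(0)$. This is consistent with "provided $\varepsilon,\delta$ small enough".

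The main obstacle is precisely this circularity. The growth of $B$ is fed by $A$, and the growth of $A$ is controlled by $\int bB$. The weight $(1+s)$ in $\delta$ is what makes $\int bB$ finite under the linear-growth ansatz for $B$, so that $A$ remains bounded. One must also choose the bootstrap constant $C$ before imposing $\delta\le1/C$, so that the smallness condition is not circular.
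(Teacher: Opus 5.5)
Your proof is correct but takes a different route from the paper's. With $C=8\mathrm{e}(A(0)+B(0)+1)$, $\varepsilon\le\log 2$ and $\delta\le 1/C$, the improved bounds $A\le 2\mathrm{e}A(0)$ and $B(s)\le 2B(0)+4\mathrm{e}A(0)s$ are strict, so the continuity argument closes.

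The paper uses no bootstrap. It puts the nonlinear term into the integrating factor as well. With $\alpha(t)=\int_0^t a$ and $\beta(t)=\int_0^t bB$, it first gets $A\le A(0)\mathrm{e}^{\alpha+\beta}$. It then shows that $U=B\mathrm{e}^{-\alpha-\beta}$ satisfies $U'\le \mathrm{e}^{-\alpha-\beta}A\le A(0)$, after discarding the term $-bB^2\le 0$. This gives $B\le (B(0)+A(0)t)\mathrm{e}^{\varepsilon+\beta}$. Finally, $V=\mathrm{e}^{\beta}$ obeys the closed scalar Riccati inequality $V'\le \mathrm{e}^{\varepsilon}(B(0)+A(0)t)\,b\,V^2$. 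This integrates explicitly to $V\le\bigl(1-\mathrm{e}^{\varepsilon}\int_0^t(B(0)+A(0)s)b(s)\,\mathrm{d}s\bigr)^{-1}$.

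What the paper's route buys is explicit, sharper bounds:
\[
A\le \frac{A(0)\mathrm{e}^{\varepsilon}}{1-\theta},\qquad B\le \frac{(B(0)+A(0)t)\mathrm{e}^{\varepsilon}}{1-\theta},\qquad \theta:=\mathrm{e}^{\varepsilon}\max\{A(0),B(0)\}\,\delta .
\]
The only smallness it needs is the single condition $\theta<1$; it requires no separate bound such as $\varepsilon\le\log 2$.

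Your continuity argument gives coarser constants, and you had to adjust $C$ along the way. In exchange it is the more portable template. It uses only the linear-growth ansatz for $B$ together with the weight $(1+s)$ in $\delta$. It would therefore survive modifications of the system for which the exact reduction to a Riccati inequality for $\mathrm{e}^{\beta}$ is not available.
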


\begin{proof} 
We define
\[ \alpha(t)=\int_0^t a(s)\mathrm{d}s,\qquad\beta(t)=\int_0^t b(s)B(s)\mathrm{d}s, \quad t  \in[0,+\infty). \]
Then, it follows from \eqref{A} that 
\begin{equation}\label{A2}
    A(t)\leq A(0)\mathrm{e}^{\alpha(t)+\beta(t)}.
\end{equation}
Next we define 
\[ \displaystyle U(t)=B(t)\mathrm{e}^{-\alpha(t)-\beta(t)}, \]
and we use \eqref{A2} to find 
\begin{align*}
    U'(t)&=\mathrm{e}^{-\alpha(t)-\beta(t)}(B'(t)-\alpha'(t)B(t)-\beta'(t)B(t))\\
    &=\mathrm{e}^{-\alpha(t)-\beta(t)}(B'(t)-a(t)B(t)-b(t)B^2(t))\\
    &\leq\mathrm{e}^{-\alpha(t)-\beta(t)}(A(t)-b(t)B^2(t))\\
    &\leq\mathrm{e}^{-\alpha(t)-\beta(t)}A(t)\leq A(0).
\end{align*}
This yields
\[
U(t)\leq U(0)+A(0)t,
\]
which implies 
\begin{equation}\label{B2}
    B(t)\leq(B(0)+A(0)t)\mathrm{e}^{\alpha(t)+\beta(t)}\leq(B(0)+A(0)t)\mathrm{e}^{\varepsilon+\beta(t)}.
\end{equation}
We define $\displaystyle V(t)=\mathrm{e}^{\beta(t)}$. Then, it follows from \eqref{B2} that 
$$V'(t)=\beta'(t)V(t)=b(t)B(t)V(t)\leq\mathrm{e}^{\varepsilon}(B(0)+A(0)t) b(t) V^2(t).$$
Now we use $V(0)=1$ and the above differential inequality to find 
\[ V(t)\leq\left[1-\mathrm{e}^{\varepsilon}\int_0^t(B(0)+A(0)s)b(s) \mathrm{d}s\right]^{-1}\leq\left(1-C\delta\mathrm{e}^{\varepsilon}\right)^{-1}. \]
If $\varepsilon$ and $\delta$ are small enough, $V(t)$ and $\beta(t)$ are bounded. We insert it back into \eqref{A2} and $\eqref{B2}$ to get the desired estimates.
\end{proof}
Now we study a priori estimates for a global solution to $\eqref{C-0}$, which plays the most important role in the proof of Theorem \ref{global2}.
\begin{lemma}\label{global}
Suppose that $d\geq 2$, and $f_0\in W^{1,\infty}(\mathbb{R}^{2d})$ is a nonnegative initial datum with $\mathrm{supp}(f_0)\subseteq B_R\times B_R=:Q_0$, and let $f\in W^{1,\infty}([0,T]\times\mathbb{R}^{2d})$ be a nonnegative solution to \eqref{C-0}. Then there exists a positive constant $\gamma_0$ depending on $d, R$ and $\Vert f_0\Vert_{W^{1,\infty}}$ such that if $\gamma<\gamma_0$, we have
 \begin{equation} \label{NN-1-1}
        \Vert f(t)\Vert_{L^{\infty}}\leq C, \quad \Vert\nabla_xf(t)\Vert_{L^{\infty}}\leq C, \quad \Vert\nabla_vf(t)\Vert_{L^{\infty}}\leq C(1+t),
 \end{equation}
    where $C$ is also a constant depending on $d, R$ and $\Vert f_0\Vert_{W^{1,\infty}}$.
\end{lemma}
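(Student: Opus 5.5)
The plan is to combine the support control of Lemma~\ref{desupp} with dispersion of free transport to get time-integrable bounds on $\rho_f$, and then feed coupled differential inequalities for $A(t):=\Vert f(t)\Vert_{L^\infty}+\Vert\nabla_x f(t)\Vert_{L^\infty}$ and $B(t):=\Vert\nabla_v f(t)\Vert_{L^\infty}$ into Lemma~\ref{Gronwall}.

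\textbf{Step 1: decay of the density.} By Lemma~\ref{desupp}, $\mathrm{supp} f(t)\subseteq Q(t)$. For fixed $x$, the $v$-slice of $Q(t)$ lies in $\{v:\ |x-vt|\le R,\ |v|\le R\}$, whose measure is at most $C\min(1,t^{-d})$. Hence
\[
\rho_f(t,x)\le C\,\Vert f(t)\Vert_{L^\infty}\,(1+t)^{-d}.
\]
The same slicing bounds $\Vert\nabla_x\rho_f(t)\Vert_{L^\infty}$ and the matrix $\int (v-v_*)\otimes\nabla_x f\,\ud v_*$ by $C\Vert\nabla_x f(t)\Vert_{L^\infty}(1+t)^{-d}$, using $|v-v_*|\le 2R$. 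This is where $d\ge2$ enters: $(1+s)(1+s)^{-d}$ is integrable, which is exactly the integrability of $(1+s)b(s)$ required in Lemma~\ref{Gronwall}.

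\textbf{Step 2: evolution of $f$ and its derivatives along characteristics.} From $\partial_t f+v\cdot\nabla_x f-\gamma E\cdot\nabla_v f=\gamma d\rho_f f$ we get $\frac{\ud}{\ud t}\Vert f\Vert_{L^\infty}\le \gamma d\Vert\rho_f\Vert_{L^\infty}\Vert f\Vert_{L^\infty}$. Differentiating the equation in $x$ gives transport of $\nabla_x f$ with sources $\gamma(\nabla_x E)^{T}\nabla_v f$, $\gamma d\rho_f\nabla_x f$ and $\gamma d f\nabla_x\rho_f$. Differentiating in $v$ gives sources $-\nabla_x f$ (from the free-transport commutator), $\gamma\rho_f\nabla_v f$ (since $\nabla_v E=\rho_f I$), and $\gamma d\rho_f\nabla_v f$. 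Arguing at maximum points (or via the integral representation as in Lemma~\ref{estimate}) and inserting Step~1 yields
\[
A'\le \gamma C(1+t)^{-d}(A+A^2)+\gamma C(1+t)^{-d}A\,B,\qquad B'\le A+\gamma C(1+t)^{-d}A\,B .
\]

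\textbf{Step 3: closing the bounds.} To get the form of Lemma~\ref{Gronwall}, I will take $a(t)=\gamma C(1+t)^{-d}(1+\Vert f\Vert_{L^\infty})$ and $b(t)=\gamma C(1+t)^{-d}$. The term $A^2$ requires an a priori bound on $\Vert f\Vert_{L^\infty}$, which I obtain first by a self-contained continuity argument: $\frac{\ud}{\ud t}\Vert f\Vert_{L^\infty}\le\gamma C(1+t)^{-d}\Vert f\Vert_{L^\infty}^2$ stays bounded by $2\Vert f_0\Vert_{L^\infty}$ for $\gamma$ small, since $(1+t)^{-d}$ is integrable. In the $B$ inequality the coefficient of $B$ is $\gamma C(1+t)^{-d}A$, not $a(t)$. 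I expect this to be the main obstacle, and plan to handle it by a bootstrap. Assume $A\le 2C_*$ on $[0,T^*]$; then that coefficient is bounded by an integrable $a$. Lemma~\ref{Gronwall} then gives $A\le C$ with $C$ close to $A(0)$ when $\varepsilon,\delta=O(\gamma)$ are small, and $B\le C(1+t)$. Choosing $\gamma_0$ so that the constants improve the bootstrap assumption closes the argument on $[0,T]$. The step needing the most care is the $\nabla_x$ equation, where $\nabla_x E$ couples to $\nabla_v f$, which grows linearly; the $(1+t)^{-d}$ decay with $d\ge2$ is precisely what absorbs that growth.
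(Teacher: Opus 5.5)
Your architecture matches the paper's: support in $Q(t)$ via Lemma~\ref{desupp}, slice-decay of $\rho_f$, transport equations for $\nabla_x f$ and $\nabla_v f$, then Lemma~\ref{Gronwall}. However, Step~1 loses a factor $t^{-1}$ that the argument cannot afford when $d=2$. You bound $\nabla_x E(t,x,v)=\int_{\mathbb{R}^d}(v-v_*)\otimes\nabla_x f(t,x,v_*)\ud v_*$ using only $|v-v_*|\le 2R$. So the coefficient of $AB$ in your $A'$ inequality is $b(t)=\gamma C(1+t)^{-d}$, and $(1+s)b(s)=\gamma C(1+s)^{1-d}$. For $d=2$ this is $\gamma C(1+s)^{-1}$, which is not integrable. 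Your assertion that $(1+s)(1+s)^{-d}$ is integrable for $d\ge 2$ is therefore wrong exactly at $d=2$, and the hypothesis $\delta<\infty$ of Lemma~\ref{Gronwall} fails. The bootstrap does not rescue this. Inserting $B\lesssim C_*(1+t)$ into $A'\le\gamma C(1+t)^{-2}AB$ yields only $A\lesssim A(0)(1+t)^{C\gamma C_*}$, which exceeds $2C_*$ for large $t$ however small $\gamma$ is. You would then need $\gamma_0$ to depend on $T$, contrary to the statement. As written, your argument proves the lemma only for $d\ge 3$.

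The missing observation concerns where the term $(\nabla_x E)^{T}\nabla_v f$ actually matters. It is nonzero only at points $(x,v)\in Q(t)$, where $\nabla_v f$ can be nonzero. The integral defining $\nabla_x E$ only sees $v_*$ with $(x,v_*)\in Q(t)$. Hence $v$ and $v_*$ lie in the same slice $\mathcal{S}(t,x)=\{w:\ |w|\le R,\ |x-wt|\le R\}$. From $|vt-v_*t|\le |x-vt|+|x-v_*t|\le 2R$ you get $|v-v_*|\le h(t):=\min\{2R,2R/t\}$, i.e.\ $\mathrm{diam}\,\mathcal{S}(t,x)\le h(t)$. This gives $|\nabla_x E|\le C\Vert\nabla_x f(t)\Vert_{L^\infty}h(t)^{d+1}$ on $Q(t)$, hence $b(t)=C\gamma h(t)^{d+1}$ and $(1+s)b(s)\sim\gamma(1+s)^{-d}$. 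That is integrable precisely for $d\ge 2$, which is how the paper obtains the sharp threshold.

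Separately, the bootstrap you single out as the main obstacle is unnecessary. Since $\nabla_v E=\rho_f I$ involves only $f$, the coefficient of $B$ in the $B'$ inequality is $\gamma C h(t)^d\Vert f(t)\Vert_{L^\infty}$. Once $\Vert f\Vert_{L^\infty}$ is bounded, this is already an integrable $a(t)$. Keeping $\Vert f\Vert_{L^\infty}$ out of $A$ (the paper takes $A=\Vert\nabla_x f\Vert_{L^\infty}$) puts the system directly in the form required by Lemma~\ref{Gronwall}.
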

\begin{proof}  In the sequel, we provide the derivations of \eqref{NN-1-1} one by one. \newline

\noindent $\bullet$~Case A (Derivation of the first estimate \eqref{NN-1-1}):
Recall the time evolution of $Q_0$ under the free transport flow in \eqref{NN-1} by
\[ Q(t)=\left \{(x+vt,v)|~(x,v)\in Q_0\right \}. \]
By Lemma ~\ref{desupp}, we have 
\[ \mathrm{supp}(f(t))\subseteq Q(t) \quad \mbox{for any $t\in[0,T]$}. \]
For $x\in\mathbb{R}^d$ and $t\in[0,T]$, we define 
\[ {\mathcal S}(t,x)=\{v \mid (x,v)\in Q(t)\}. \]
Then, one has  
\[ v\in {\mathcal S}(t,x) \quad \iff \quad  v\in B_R \quad \mbox{and} \quad x-vt\in B_R. \]
This implies
\begin{equation} \label{NN-2}
\mathrm{diam} {\mathcal S}(t,x)\leq h(t):= \min\left\{2R,\frac{2R}{t}\right\}.
\end{equation}
For simplicity, we set
$$E(t,x,v):=E[f](t,x,v)=\int_{\mathbb{R}^d}(v-v_*)f(t,x,v_*)\mathrm{d}v_*. $$
For $(x,v)\in Q(t)$, we have
\begin{align*}
\begin{aligned}
|E(x,v,t)| &\leq\Vert f(t)\Vert_{L^{\infty}}\int_{{\mathcal S}(t,x)}|v-v_*|\mathrm{d}v_* \leq\Vert f(t)\Vert_{L^{\infty}}\int_{B_{h(t)}}|w|\mathrm{d}w \\
&=C\Vert f(t)\Vert_{L^{\infty}}h^{d+1}(t).
\end{aligned}
\end{align*}
Similarly, we also have
\begin{align*}
\begin{aligned}
& |\partial_{x_i}E(x,v,t)|\leq C\Vert\nabla_xf(t)\Vert_{L^{\infty}}h^{d+1}(t), \quad |\partial_{v_i}E(x,v,t)|\leq C\Vert f(t)\Vert_{L^{\infty}}h^d(t), \\
& \partial_{v_i,v_j}^2E(x,v,t)=0,\qquad|\partial_{x_i,v_j}^2E(x,v,t)|\leq C\Vert\nabla_xf(t)\Vert_{L^{\infty}}h^d(t), \quad i, j \in [d].
\end{aligned}
\end{align*}
Note that our equation \eqref{C-0} can be rewritten as
\begin{equation}\label{feq}
    \partial_tf+v\cdot\nabla_xf-\gamma E\cdot\nabla_vf=\gamma f\nabla_v\cdot E.
\end{equation}
Thus along the characteristic curve $Z_f(s,t;z)$, we have
$$\frac{\mathrm{d}}{\mathrm{d}s}f(s,Z_f(s,t;z))=\gamma(f\nabla_v\cdot E)(s,Z_f(s,t;z)).$$
If we define
$$F(t):=\Vert f(t)\Vert_{L^{\infty}},\quad t\in[0,T],$$
then using the estimates before, we have
$$F'(t)\leq\gamma\Vert f(t)\Vert_{L^{\infty}}\Vert\nabla_v\cdot E\Vert_{L^{\infty}}\leq C\gamma h^d(t)F^2(t).$$
This implies
$$F(t)\leq\Vert f_0\Vert_{L^{\infty}}\left(1-C\gamma\Vert f_0\Vert_{L^{\infty}}\int_0^t h^d(s)\mathrm{d}s\right)^{-1}.$$
The expression \eqref{NN-2} and the assumption $d\geq 2$ imply
\[  \int_0^{+\infty}h^d(s)\mathrm{d}s<+\infty. \]
Hence for $\gamma$ small enough, we get the first estimate in \eqref{NN-1-1}. \newline

\noindent $\bullet$~Case B  (Derivation of the second and third estimates \eqref{NN-1-1}):~We differentiate \eqref{feq} with respect to $x_i$ and $v_i$ respectively, to get
\begin{align}
\begin{aligned} \label{dxfeq}
& \partial_t\partial_{x_i}f+v\cdot\nabla_x\partial_{x_i}f-\gamma E\cdot\nabla_v\partial_{x_i}f=\gamma\partial_{x_i}f\nabla_v\cdot E+\gamma f\nabla_v\cdot\partial_{x_i}E+\gamma\partial_{x_i}E\cdot\nabla_vf, \\
& \partial_t\partial_{v_i}f+v\cdot\nabla_x\partial_{v_i}f-\gamma E\cdot\nabla_v\partial_{v_i}f=-\partial_{x_i}f+\gamma\partial_{v_i}E\cdot\nabla_vf+\gamma\partial_{v_i}f\nabla_v\cdot E.
\end{aligned}
\end{align}
We define
$$A(t):=\Vert\nabla_xf(t)\Vert_{L^{\infty}},\quad B(t):=\Vert\nabla_vf(t)\Vert_{L^{\infty}},\quad t\in[0,T].$$
Similarly, we use the previous estimates before to derive 
\begin{align*}
\begin{aligned}
& A'(t)\leq C\gamma h^d(t)A(t)+C\gamma h^{d+1}(t)A(t)B(t), \\
& B'(t)\leq A(t)+C\gamma h^d(t)B(t).
\end{aligned}
\end{align*}
We set
\[  {\mathfrak a}(t)=C\gamma h^d(t), \quad \mathfrak{b}(t)=C\gamma h^{d+1}(t) \]
Then, Lemma ~\ref{Gronwall} yields the desired second and third estimates in \eqref{NN-1-1}.
\end{proof} 
Now we are ready to provide  the proof of Theorem ~\ref{global2}. \newline

\subsubsection{Proof of Theorem \ref{global2}} \label{sec:3.2.2}
Let $\gamma_0$ be a positive constant in Lemma \ref{global}. Then for $\gamma<\gamma_0$, we define
$$T_0:=\sup\left\{T>0|~\exists~f\in W^{1,\infty}([0,T]\times\mathbb{R}^{2d})\textup{ with initial datum }~f_0~\textup{such that}~\eqref{supp}  \text{holds} \right\}.$$
Assume that $T_0<+\infty$. Then for all solutions $f\in W^{1,\infty}([0,T]\times\mathbb{R}^{2d})$ with initial data $f_0$ and satisfying \eqref{supp}, by Lemmas ~\ref{desupp} and ~\ref{global}, we have
$$v\textup{-}\mathrm{supp}(f(T))\subseteq B_{R_0},\qquad\Vert f(T)\Vert_{W^{1,\infty}}\leq C(1+T_0).$$
By the local well-posedness result (see Theorem~\ref{W1infty}), there exists a uniform time $t_0$ which does not depend on $f$, such that we can extend any solution above to time $T+t_0$. This fact contradicts to the definition of $T_0$. Thus we must have $T_0=+\infty$, which proves the existence part. The uniqueness part follows from Proposition ~\ref{uniqueness}.
\hfill $\square$
\subsection{Asymptotic completeness} \label{sec:3.3}
In this subsection, we study how the nonlinear equation \eqref{C-0} can be approximated by the corresponding free transport in large time.
This phenomena is called \textbf{asymptotic completeness} and it is an extensively studied topic in a scattering theory. \newline
Consider a nonlinear system 
\begin{equation}\label{nonlinear}
\partial_t f +v\cdot\nabla_x f=\mathcal{N}(f,\nabla_vf),
\end{equation}
and its associated free transport equation
\begin{equation}\label{ft}
\partial_t f +v\cdot\nabla_x f=0,
\end{equation}
we want to know whether or not the global solutions to \eqref{nonlinear} can be approximated by the free transport solutions to \eqref{ft} with suitable initial datum time-asymptotically. The free transport equation \eqref{ft} is solved by
$$f(t,x,v)=(U_0(t)f_0)(x,v):=f_0(x-vt,v),$$
where $f_0$ is the initial data and $U_0(t)$ is called the free transport operator. Now we give a rigorous definition for asymptotic completeness.
\begin{definition}\textup{
For $p \geq 1$, the nonlinear equation \eqref{nonlinear} exhibits $L^p$-asymptotic completeness if and only if for every global solution $f$ to \eqref{nonlinear}, there exist unique asymptotic free states $f_{\pm}$, such that
 \[ \lim_{t\rightarrow\pm\infty}\Vert U_0(-t)f(t)-f_{\pm}\Vert_{L^p}=0, \quad \mbox{or equivalently} \quad \lim_{t \to \pm \infty} \Vert f(t)-U_0(t)f_{\pm}\Vert_{L^p}=0. \]}
\end{definition}
The next proposition gives a sufficient condition for time-forward $L^1$-asymptotic completeness.
\begin{proposition}\label{sua}
\cite{CH11,HKLN07}
    Let $f$ be a global solution to \eqref{nonlinear} satisfying the estimate:
    \begin{equation}\label{su}
        \int_0^{+\infty}\Vert U_0(-t)\mathcal{N}(f,\nabla_vf)(t)\Vert_{L^1}\mathrm{d}t<+\infty.
    \end{equation}
Then, there exists a unique $L^1$-scattering state $f_0^+\in L^1(\mathbb{R}^{2d})$ such that
    $$\lim_{t\rightarrow +\infty}\Vert U_0(-t)f(t)-f_0^+\Vert_{L^1}=0.$$
\end{proposition}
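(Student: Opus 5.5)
We will use the Duhamel formula for the nonlinear equation \eqref{nonlinear}, written in the interaction picture. Set $g(t):=U_0(-t)f(t)$, i.e. $g(t,x,v)=f(t,x+vt,v)$. Since $U_0(t)$ commutes with the free transport operator, $g$ satisfies, at least formally,
\[
\partial_t g(t)=U_0(-t)\bigl(\partial_t f+v\cdot\nabla_x f\bigr)(t)=U_0(-t)\mathcal{N}(f,\nabla_vf)(t).
\]
Integrating from $s$ to $t$ gives
\[
g(t)-g(s)=\int_s^t U_0(-\tau)\mathcal{N}(f,\nabla_vf)(\tau)\,\mathrm{d}\tau .
\]
To make this rigorous for the class of global solutions under consideration (for instance $W^{1,\infty}$ solutions with compact support), we would test against smooth compactly supported functions. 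Alternatively, we can use that $g$ is absolutely continuous in time with values in $L^1$, since $\partial_t f+v\cdot\nabla_xf$ is an $L^1$ function along the orbits. We would treat this justification as routine.

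The key step is the Cauchy criterion. Taking $L^1$ norms and using Minkowski's integral inequality gives
\[
\Vert g(t)-g(s)\Vert_{L^1}\le\int_s^t\Vert U_0(-\tau)\mathcal{N}(f,\nabla_vf)(\tau)\Vert_{L^1}\,\mathrm{d}\tau ,
\]
and by \eqref{su} the right-hand side tends to $0$ as $s,t\to+\infty$. Hence $\{g(t)\}_{t\ge0}$ is Cauchy in $L^1(\mathbb{R}^{2d})$. Completeness of $L^1$ then yields a limit $f_0^+$ with $\Vert U_0(-t)f(t)-f_0^+\Vert_{L^1}\to0$, and explicitly
\[
f_0^+=g(0)+\int_0^\infty U_0(-\tau)\mathcal{N}(f,\nabla_vf)(\tau)\,\mathrm{d}\tau ,
\]
where the integral is a Bochner integral in $L^1$.

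Uniqueness is immediate, since limits in $L^1$ are unique. The equivalent formulation follows because $U_0(t)$ is an $L^1$ isometry, as the map $(x,v)\mapsto(x-vt,v)$ preserves Lebesgue measure. This gives $\Vert f(t)-U_0(t)f_0^+\Vert_{L^1}=\Vert U_0(-t)f(t)-f_0^+\Vert_{L^1}$.

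The main obstacle is the rigorous justification of the Duhamel identity, namely the strong measurability and integrability of $\tau\mapsto U_0(-\tau)\mathcal{N}(f,\nabla_vf)(\tau)$ in $L^1$. We would handle it through the weak (distributional) formulation and then pass to the norm bound by density.
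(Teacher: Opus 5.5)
Your proposal is correct and follows the paper's argument: Duhamel's formula in the interaction picture, $U_0(-t)f(t)=f_0+\int_0^tU_0(-\tau)\mathcal{N}(f,\nabla_vf)(\tau)\,\mathrm{d}\tau$, combined with the tail bound $\int_t^{\infty}\Vert U_0(-\tau)\mathcal{N}(f,\nabla_vf)(\tau)\Vert_{L^1}\,\mathrm{d}\tau\to0$ coming from \eqref{su}. The only cosmetic difference is that you obtain the limit via the Cauchy criterion and completeness of $L^1$, whereas the paper defines $f_0^+=f_0+\int_0^{\infty}U_0(-\tau)\mathcal{N}(f,\nabla_vf)(\tau)\,\mathrm{d}\tau$ directly and bounds the difference; like you, the paper takes the justification of Duhamel's formula for granted.
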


\begin{proof} 
Using Duhamel's formula for \eqref{nonlinear}, we get
$$f(t)=U_0(t)f_0+\int_0^t U_0(t-\tau)\mathcal{N}(f,\nabla_v f)(\tau)\mathrm{d}\tau.$$
Thus, we have
$$U_0(-t)f(t)=f_0+\int_0^t U_0(-\tau)\mathcal{N}(f,\nabla_v f)(\tau)\mathrm{d}\tau.$$
Define
$$f_+^0:= f_0+\int_0^{+\infty} U_0(-\tau)\mathcal{N}(f,\nabla_v f)(\tau)\mathrm{d}\tau,$$
which is well-defined by \eqref{su}. Then, we have
$$\Vert U_0(-t)f(t)-f_+^0\Vert_{L^1}
\leq\int_t^{+\infty}\Vert U_0(-\tau)\mathcal{N}(f,\nabla_v f)(\tau)\Vert_{L^1}\mathrm{d}\tau\rightarrow 0,$$
as $t\rightarrow +\infty$. The uniqueness is obvious, since $f_0^+$ is the $L^1$-limit of $U_0(-t)f(t)$ as $t\rightarrow +\infty$.
\end{proof} 
Using the estimates obtained in, for instance, Lemma~\ref{global}, Proposition~\ref{sua} admits the following direct corollary: the forward-in-time $L^1$ asymptotic completeness for our equation~\eqref{Eq}.
\begin{theorem}

Suppose that $d\geq 2$, and $f_0\in W^{1,\infty}(\mathbb{R}^{2d})$ is a nonnegative initial datum with compact support,  and let $\gamma>0$ be small enough such that the global solution $f$ in Theorem \textup{~\ref{global2}} exists. Then there exists a unique $L^1$-scattering state $f_0^+\in L^1(\mathbb{R}^{2d})$, such that
    $$\lim_{t\rightarrow +\infty}\Vert U_0(-t)f(t)-f_0^+\Vert_{L^1}=0.$$
\end{theorem}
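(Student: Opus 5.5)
We will verify the sufficient condition \eqref{su} of Proposition~\ref{sua}, with $\mathcal{N}(f,\nabla_vf)=\gamma\nabla_v\cdot(E[f]f)=\gamma E\cdot\nabla_vf+\gamma f\nabla_v\cdot E$. Since $U_0(-t)$ is the pullback by the measure-preserving map $(x,v)\mapsto(x+vt,v)$, it is an isometry on $L^1(\mathbb{R}^{2d})$. Hence it suffices to prove
\[
\int_0^{+\infty}\bigl\Vert \mathcal{N}(f,\nabla_vf)(t)\bigr\Vert_{L^1}\,\mathrm{d}t<+\infty .
\]

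We take $R_0$ with $\mathrm{supp}(f_0)\subseteq B_{R_0}\times B_{R_0}$ and use the global solution $f$ from Theorem~\ref{global2}. Lemma~\ref{desupp} gives $\mathrm{supp}(f(t))\subseteq Q(t)$, and the Lebesgue measure of $Q(t)$ equals $|Q_0|$, since free transport preserves volume. Lemma~\ref{global} (with $\gamma<\gamma_0$) gives
\[
\Vert f(t)\Vert_{L^\infty}\le C,\qquad \Vert\nabla_vf(t)\Vert_{L^\infty}\le C(1+t).
\]
The proof of Lemma~\ref{global} also gives, on $Q(t)$, the bounds $|E|\le C\Vert f(t)\Vert_{L^\infty}h^{d+1}(t)$ and $|\nabla_v\cdot E|\le C\Vert f(t)\Vert_{L^\infty}h^d(t)$, where $h(t)=\min\{2R_0,2R_0/t\}$.

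Integrating pointwise over $Q(t)$ then yields
\[
\Vert\mathcal{N}(t)\Vert_{L^1}\le \gamma|Q_0|\Bigl(C h^{d+1}(t)(1+t)+C h^d(t)\Bigr).
\]
Because $h^{d+1}(t)(1+t)\lesssim t^{-d}$ and $h^d(t)\lesssim t^{-d}$ for large $t$, the right-hand side is integrable on $[0,\infty)$ when $d\ge2$. Near $t=0$ everything is bounded. The integral is therefore finite, and Proposition~\ref{sua} gives existence and uniqueness of $f_0^+$.

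The main obstacle is the growth $\Vert\nabla_vf\Vert_{L^\infty}\le C(1+t)$. It has to be compensated by the extra factor of $h$ in the bound on $E$: this is why we use the $h^{d+1}$ decay of $E$ itself, not just $h^d$. The restriction $d\ge 2$ is exactly what makes $t^{-d}$ integrable at infinity. One minor point is that $\mathcal{N}$ is only $L^\infty$, since $f\in W^{1,\infty}$, but the Duhamel formula in Proposition~\ref{sua} holds along characteristics for such Lipschitz solutions, so no further regularity is needed.
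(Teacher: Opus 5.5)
Your proposal is correct and follows essentially the same route as the paper. You reduce via Proposition~\ref{sua} to integrability of $\Vert\gamma\nabla_v\cdot(E[f]f)(t)\Vert_{L^1}$, bound this by the $L^\infty$ norm using the constant measure of $Q(t)$, and combine the bounds from Lemma~\ref{global} into the integrable majorant $Ch^d(t)+C(1+t)h^{d+1}(t)\lesssim t^{-d}$; you also state explicitly the $L^1$-isometry of $U_0(-t)$, which the paper uses tacitly.
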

{\raggedright\textit{Proof.}\quad}
By Proposition~\ref{sua}, it suffices to show that
$$\int_0^{+\infty}\Vert\gamma\nabla_v\cdot(E[f]f)(t)\Vert_{L^1}\mathrm{d}t<+\infty, $$
Suppose that 
\[ \mathrm{supp}(f_0)\subseteq B_R\times B_R, \]
and we recall that
$$\mathrm{supp}(f(t))\subseteq Q(t)=\{(x+vt,v)|(x,v)\in B_R\times B_R\}.$$
Since the Lebesgue measure of $Q(t)$ does not change along with $t$, we have
$$\Vert\nabla_v\cdot(E[f]f)(t)\Vert_{L^1}\leq C\Vert\nabla_v\cdot(E[f]f)(t)\Vert_{L^{\infty}},$$
where $C$ is some constant depending on $d$ and $R$. Moreover, we  recall that
\begin{align*}
\begin{aligned}
& \Vert f(t)\Vert_{L^{\infty}}\leq C,\quad\Vert\nabla_vf(t)\Vert_{L^{\infty}}\leq C(1+t), \\
& \Vert E(t)\Vert_{L^{\infty}}\leq C\Vert f(t)\Vert_{L^{\infty}}h^{d+1}(t),\quad\Vert\nabla_v\cdot E(t)\Vert_{L^{\infty}}\leq C\Vert f(t)\Vert_{L^{\infty}}h^d(t),
\end{aligned}
\end{align*}
where $h$ is defined in \eqref{NN-2}. Therefore, we have
\begin{align*}
\begin{aligned}
\Vert\nabla_v\cdot(E[f]f)(t)\Vert_{L^{\infty}} &\leq\Vert\nabla_v\cdot E(t)\Vert_{L^{\infty}}\Vert f(t)\Vert_{L^{\infty}}+\Vert E(t)\Vert_{L^{\infty}}\Vert\nabla_vf(t)\Vert_{L^{\infty}} \\
&\leq Ch^d(t)+C(1+t)h^{d+1}(t).
\end{aligned}
\end{align*}
The right hand side is integrable, which yields the desired estimate. 
\hfill $\square$

\subsection{Mono-kinetic solutions} \label{sec:3.4}
In this subsection, we study mono-kinetic solutions to \eqref{Eq}, which refer to weak solutions of the form:
\begin{equation}\label{form}
    f(t,x,v)=\rho(t,x) \otimes \delta_{v=u(t,x)}.
\end{equation}
Next, we show that in the one-dimensional case, $u(t,x)$ solves the inviscid Burgers' equation, which coincides with what we have derived in the introduction. 
Furthermore, we can see that mono-kinetic solutions can blow up in finite time, thanks to the property of inviscid Burgers' equation. 

Note that $f$ is a weak solution to \eqref{Eq} if and only if for any $g\in C_0^{\infty}(\mathbb{R}^{2d})$,
\begin{align}
\begin{aligned} \label{weak}
& \frac{d}{dt} \int_{\mathbb{R}^{2d}}f(t,x,v)g(x,v)\mathrm{d}x\mathrm{d}v \\
& \hspace{1cm} = \int_{\mathbb{R}^{2d}}(v\cdot\nabla_xg)f\mathrm{d}x\mathrm{d}v -\gamma\int_{\mathbb{R}^{2d}}f\nabla_vg\cdot\int_{\mathbb{R}^d}(v-v_*)f(t,x,v_*)\mathrm{d}v_* \mathrm{d}x\mathrm{d}v.
\end{aligned}
\end{align}
We substitute \eqref{form} into \eqref{weak}. 
Note that the second term of right-hand side of \eqref{weak} becomes $0$, so
\begin{equation}\label{weak2}
    \frac{d}{dt} \int_{\mathbb{R}^d}\rho(t,x)g(x,u(t,x))\mathrm{d}x =  \int_{\mathbb{R}^d}\rho(t,x)u(t,x)\cdot(\nabla_xg)(x,u(t,x))\mathrm{d}x.
\end{equation}

\noindent $\bullet$ (Derivation of the continuity equation): We choose $g$ to be  independent of $v$ to see that $\rho$ and $u$ solve the continuity equation:
\begin{equation}\label{continuity}
    \partial_t\rho+\nabla_x\cdot(\rho u)=0.
\end{equation}

\vspace{0.2cm}

\noindent $\bullet$ (Derivation of the momentum equation):~We substitute \eqref{continuity} into \eqref{weak2} to find 
\begin{align*}
    0=&\int_{\mathbb{R}^d}\partial_t\rho\cdot g(x,u(t,x))\mathrm{d}x+\int_{\mathbb{R}^d}\rho\partial_tu\cdot(\nabla_vg)(x,u(t,x))\mathrm{d}x-\int_{\mathbb{R}^d}\rho u\cdot(\nabla_xg)(x,u(t,x))\mathrm{d}x\\
    =&\int_{\mathbb{R}^d}\rho\partial_tu\cdot(\nabla_vg)(x,u(t,x))\mathrm{d}x+\int_{\mathbb{R}^d}\rho u\cdot\left[\nabla_xg(x,u(t,x))-(\nabla_xg)(x,u(t,x))\right]\mathrm{d}x\\
    =&\int_{\mathbb{R}^d}\rho\partial_tu\cdot(\nabla_vg)(x,u(t,x))\mathrm{d}x+\int_{\mathbb{R}^d}\rho u\cdot\left(D_xu\cdot(\nabla_vg)(x,u(t,x))\right)\mathrm{d}x,
\end{align*}
where $D_xu$ denotes the Jacobian matrix of $u$ with respect to $x$. Now, we take $\nabla_vg$ to be independent of $v$ to see that $u$ solves the equation:
\begin{equation}\label{dimd}
    \partial_tu+(D_xu)^Tu=0.
\end{equation}
For the one-dimensional case,  the equation \eqref{dimd} becomes
\begin{equation}
    \partial_tu+u\partial_xu=0.
\end{equation}
This is exactly the inviscid Burgers' equation, whose solutions are known to blow up in finite time for some cases. 
For a more concrete example, when the initial data is given by $u_0(x)=-x$, the solution is
$$u(t,x)=\frac{x}{t-1}.$$
We insert this explicit form back into \eqref{continuity} to  get
$$\rho(t,x)=\frac{1}{1-t}\rho_0\left(\frac{x}{1-t}\right),$$
where $\rho_0$ is the initial datum. So, we have
$$f(t,x,v)=\frac{1}{1-t}\rho_0\left(\frac{x}{1-t}\right)\delta_{v=\frac{x}{t-1}},$$
which blows up at time $t=1$.

\section{Numerical simulations} \label{sec:4}
In this section, we present numerical simulations in the one-dimensional case with $d= 1$. The goal is to visualize the time evolution of several physical quantities and to confirm the role of the alignment coefficient $\gamma$.

\subsection{Computational setup} \label{sec:5.1}

We solve \eqref{Eq} on a truncated phase-space domain
\[
(x,v)\in [0,L_x]\times[-L_v/2,L_v/2],
\]
with $L_x,L_v>0$ chosen large enough so that the numerical support stays away from the boundary
on the time interval $[0,T]$ considered.
This truncation is consistent with the compact-support framework of the analysis:
if $f_0$ is compactly supported, then the support remains bounded on any finite time interval.
We use a uniform grid with mesh sizes $\Delta x$ and $\Delta v$ and time step $\Delta t$, and we write $f_{i,j}^n\approx f(t^n,x_i,v_j)$.
The initial datum $f_0$ is taken to be a uniform distribution on a square patch $S$ inside the domain.
We implement \eqref{Eq} by an operator splitting method, separating the transport and alignment parts.
The hyperparameters are shown in Table~\ref{tab:params}.
\begin{table}[h!]
\centering
\begin{tabular}{@{}ll@{}}
\toprule
\textbf{Quantity} & \textbf{Value} \\
\midrule
Truncated domain size ($L_x,\ L_v$) & $(20,6)$ \\
Final time $T$ & $3.0$ \\
Time step $\Delta t$ & $10^{-4}$ \\
Grid size $(\Delta x,\ \Delta v)$ & $(0.05, 0.01)$ \\
Length of side of $S$ (support of initial data) & $2$ \\
Position of center of $S$ & $(11,-0.3) $ \\
\bottomrule
\hspace{0.2cm}
\end{tabular}
\caption{Common simulation parameters.}
\label{tab:params}
\end{table}

\subsection{Physical observables and alignment} \label{sec:5.2}
In the sequel, we track the discrete analogues of the conserved/dissipated quantities in Proposition~\ref{evolution}. This is done for $\gamma=1$.
Figure~\ref{Fig:Physical_quantities} shows the evolution of mass, momentum, energy, and entropy.
We observe that, after some time, numerical errors accumulate and the computed quantities no longer perfectly follow the theoretical evolution laws; however, the expected behavior is clearly visible at early times. 

\begin{figure}[htbp]
    \centering
    \begin{minipage}{0.48\textwidth}
        \centering
        \includegraphics[width=\linewidth]{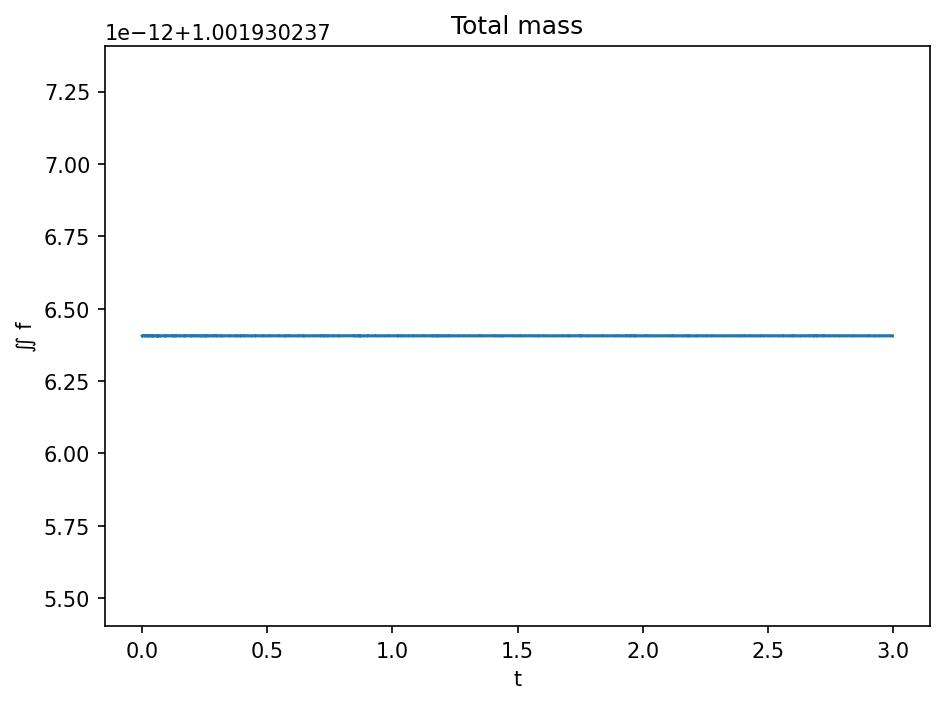}
    \end{minipage}\hfill
    \begin{minipage}{0.48\textwidth}
        \centering
        \includegraphics[width=\linewidth]{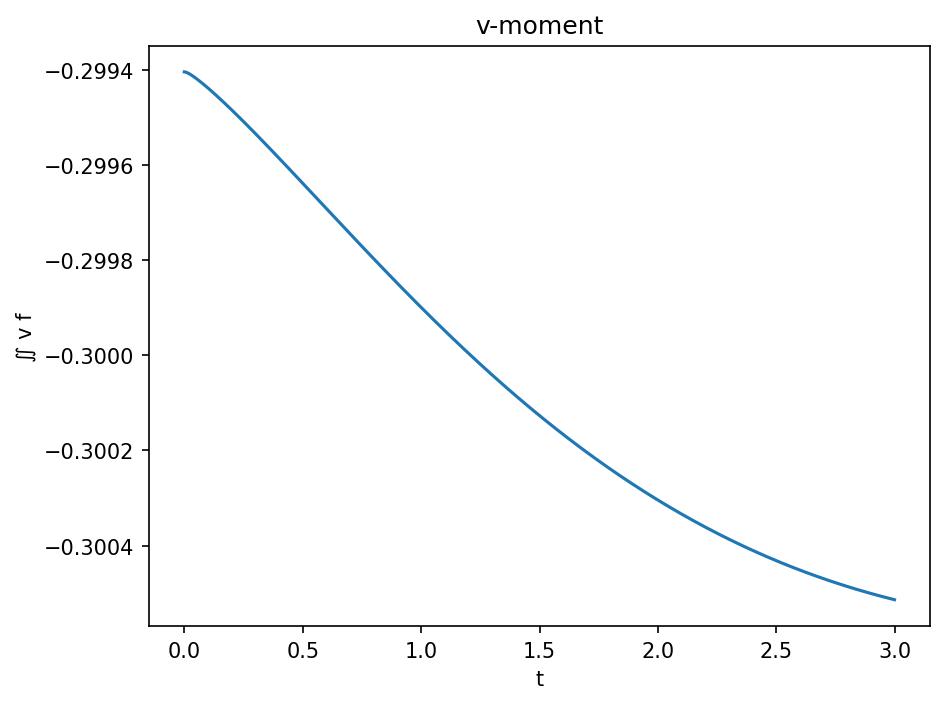}
    \end{minipage}

    \vspace{0.5em}

    \begin{minipage}{0.48\textwidth}
        \centering
        \includegraphics[width=\linewidth]{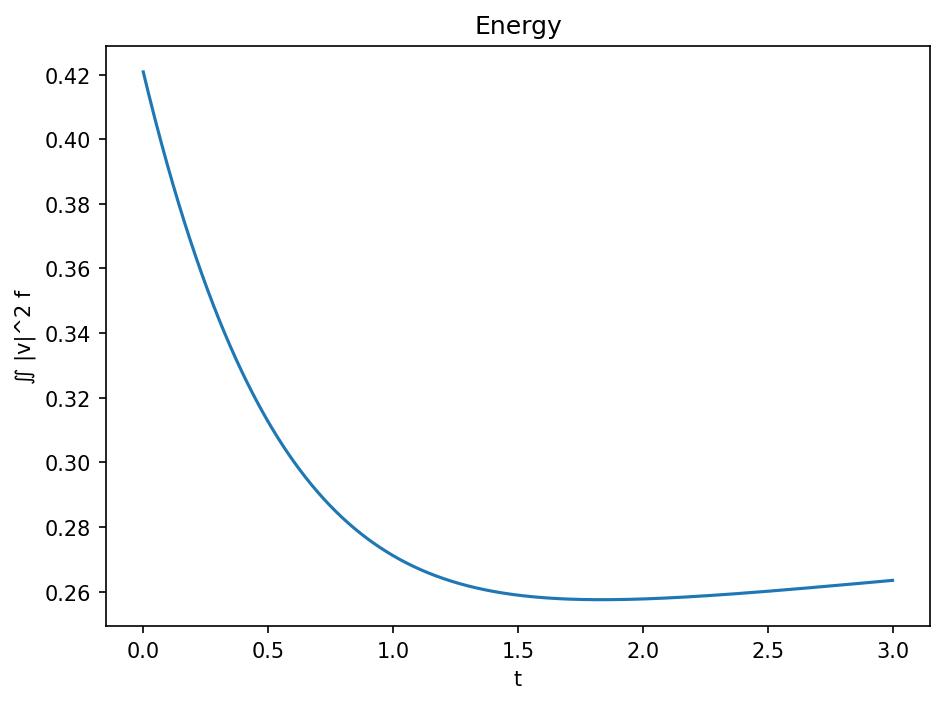}
    \end{minipage}\hfill
    \begin{minipage}{0.48\textwidth}
        \centering
        \includegraphics[width=\linewidth]{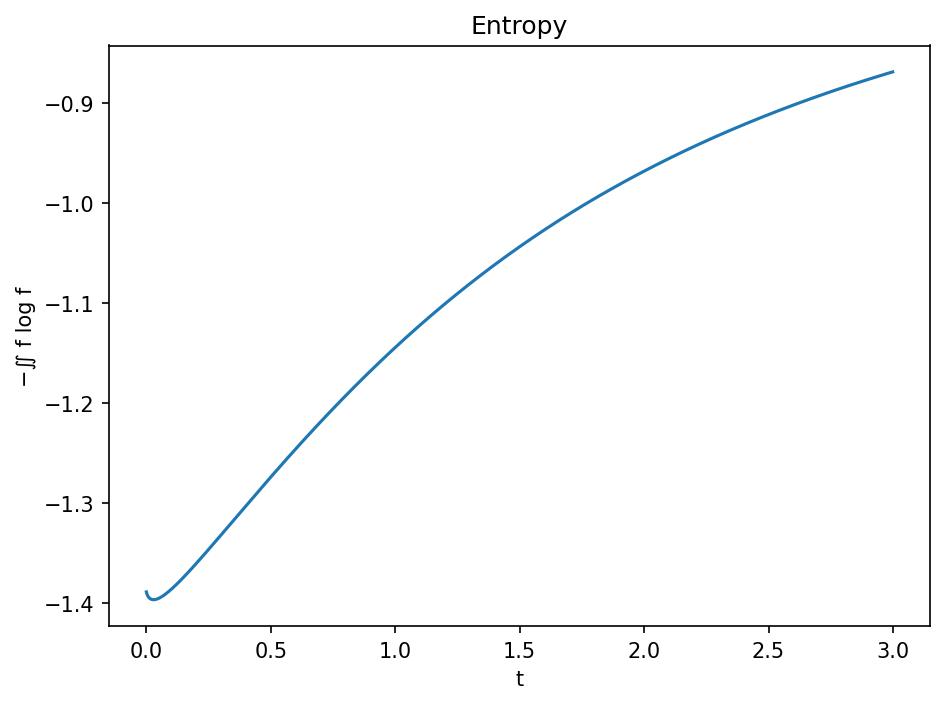}
    \end{minipage}
    \caption{From the left top, mass, velocity momentum, energy, and entropy.
    Although the velocity moment looks like it is decreasing, the difference is not that big compare to it's value.}
    \label{Fig:Physical_quantities}
\end{figure}
Next, we illustrate the effect of $\gamma$ by considering
\[
h(t,v)=\sup_{x\in [-L_x,L_x]} f(t,x,v).
\]
$v$-support of $h$ is actually the $v$-support of $f$, so we can visually see the $v$-support of $f$.
Also, the amplitude shows (Figure~\ref{Fig:h}) that  for small $\gamma$ the time evolution of $h(t,\cdot)$ is gradual, whereas for large $\gamma$ the change is much more rapid.

\begin{figure}[htbp]
    \centering
    \begin{minipage}{0.32\textwidth}
        \centering
        \includegraphics[width=\linewidth]{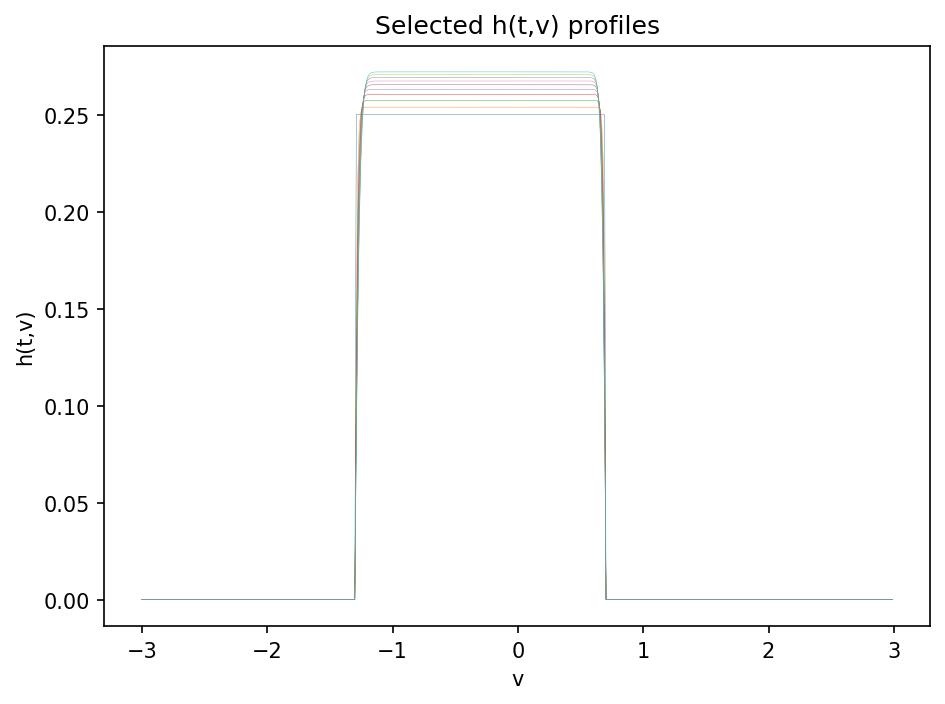}
    \end{minipage}\hfill
    \begin{minipage}{0.32\textwidth}
        \centering
        \includegraphics[width=\linewidth]{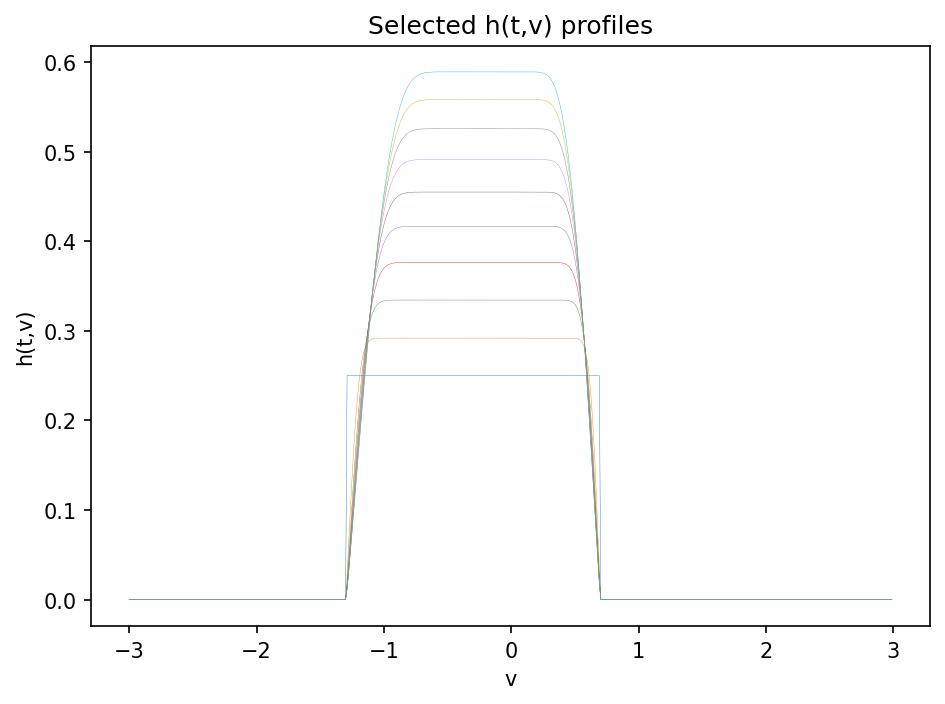}
    \end{minipage}
    \begin{minipage}{0.32\textwidth}
        \centering
        \includegraphics[width=\linewidth]{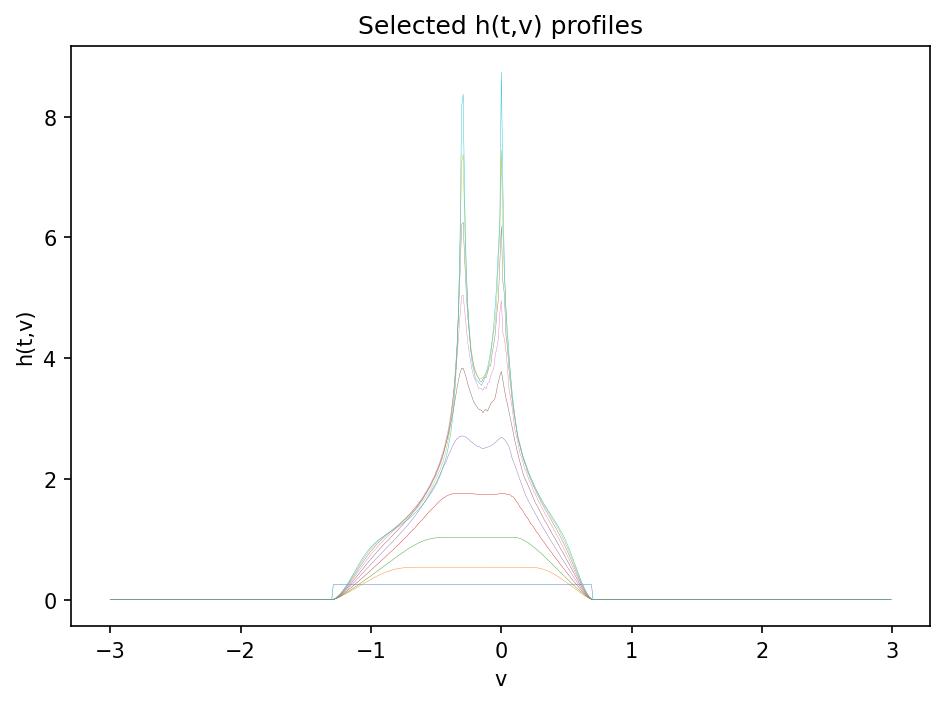}
    \end{minipage}
    \caption{Profile of the function $h(t,v)$ with $\gamma=0.1,\ 1.0,\text{ and } 5.0$. 
    As $t$ increases the profile's amplitude goes higher.
    This agrees with the alignment property of our equation (in momentum or energy), since it means that the distribution aligns in velocity.}
    \label{Fig:h}
\end{figure}
In particular, if we compare the final distributions via heatmaps (Figure~\ref{Fig:heatmap}), the small-$\gamma$ case resembles free transport, while the large-$\gamma$ case is closer to a mono-kinetic profile.
The former was more predictable from Proposition~\ref{sua}, but the  latter is more interesting.
This possibly motivates further investigations of the stability of mono-kinetic solutions in the large-$\gamma$ regime.

\begin{figure}[htbp]
    \centering
    \includegraphics[width=0.9\linewidth]{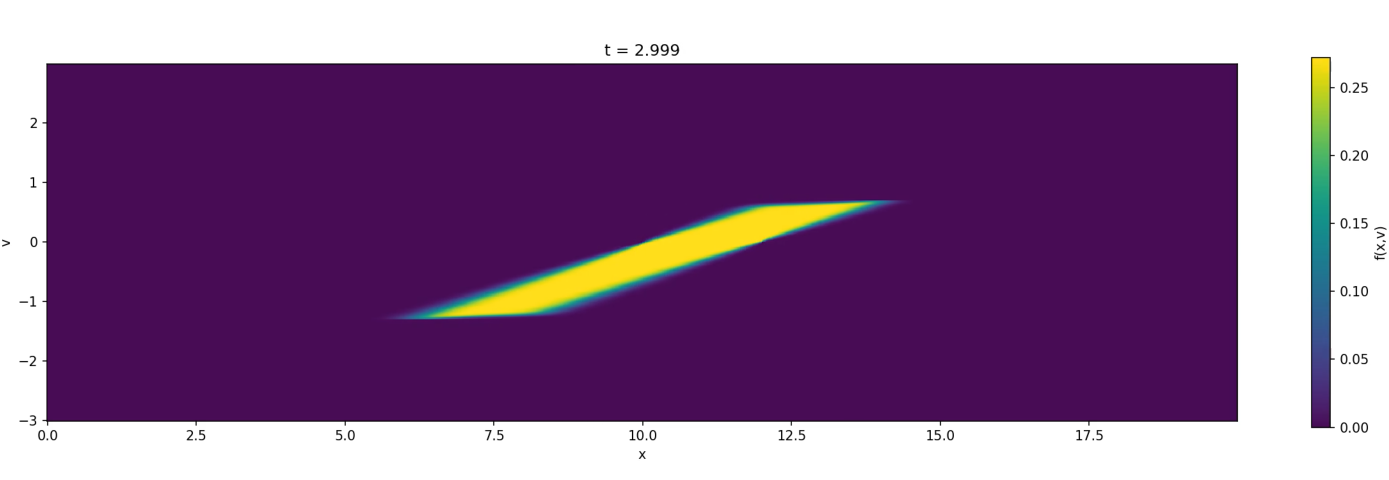}
    \vspace{0.5em}
    \includegraphics[width=0.9\linewidth]{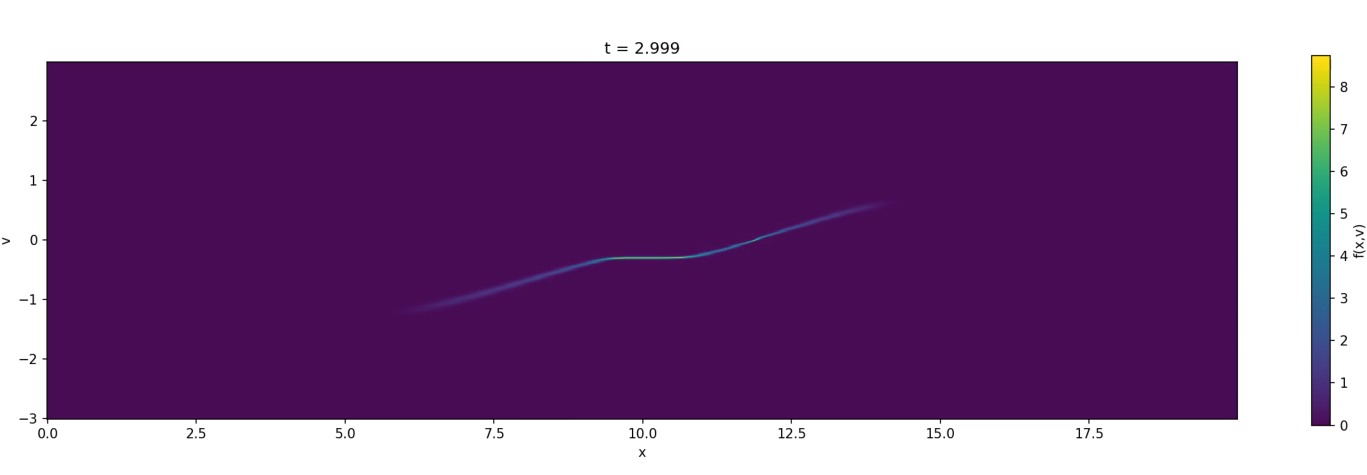}
    \caption{Each final distribution is produced by $\gamma=0.1,\ 5.0$, respectively.}
    \label{Fig:heatmap}
\end{figure}

\section{Conclusion} \label{sec:5}
In this paper, we have derived the kinetic equation arising from the large-scale limit from the Cucker-Smale model. For the derived kinetic model, we have obtained the local and global existence of solutions in different function spaces $W^{1,\infty}$ and $C_b^{1,\alpha}$. We also showed that the given nonlinear system can be effectively approximated by the corresponding free transport flow. Of course, there are several open questions that we did not discuss in this work. To name a few, we did not treat the emergent dynamics of the proposed kinetic equation, and the derivations of the kinetic and hydrodynamic models from the Cucker-Smale model are formal. These interesting issues will be left for future work.

\begin{appendix}

\section{Formal derivation of the kinetic model}\label{App-A}
\setcounter{equation}{0}
In this appendix, we derive~\eqref{Eq} in a very classical way by using the BBGKY hierarchy together with a molecular chaos assumption.
We set
$$X_N=(x_1,\cdots,x_N),\quad V_N=(v_1,\cdots,v_N),$$
and
$$X_j=(x_1,\cdots,x_j),\quad V_j=(v_1,\cdots,v_j),$$
for $j=1\cdots N$.  Let $f^N(t,X_N,V_N)$ be a symmetric $N$-particle phase-space density on $(\mathbb{R}^{2d})^N$
evolving according to the particle system~\eqref{RCS}. Then $f^N$ satisfies the Liouville equation
\begin{equation}\label{Liouville}
	\Big(\partial_t+\sum_{i=1}^{N}v_i\cdot\nabla_{x_i}\Big)f^N=\kappa T_N^{\varepsilon}f^N,
\end{equation}
where
\[ T_j^{\varepsilon}=\sum_{1\leq k<l\leq j}T_{k,l}^{\varepsilon}, \quad T_{k,l}^{\varepsilon}f^N=(\nabla_{v_k}-\nabla_{v_l})\cdot\left(f^N\psi\left(\frac{|x_k-x_l|}{\varepsilon}\right)(v_k-v_l)\right). \]
Now we define the $j$-marginal distribution $f_j^N$ as
$$f_j^N(t,X_j,V_j)=\int_{\mathbb{R}^{2d(N-j)}}f^N(X_j,x_{j+1},\cdots,x_N;V_j,v_{j+1},\cdots,v_N)\mathrm{d}x_{j+1}\cdots\mathrm{d}x_N\mathrm{d}v_{j+1}\cdots\mathrm{d}v_N$$
for $j \in [N-1]$ and $f_N^N=f^N$. Then by a partial integration of the Liouville equation \eqref{Liouville} we get the BBGKY hierarchy:
\begin{equation}\label{BBGKY}
	\left(\partial_t+\sum_{i=1}^{j}v_i\cdot\nabla_{x_i}\right)f_j^N=\kappa T_j^{\varepsilon}f_j^N+\kappa(N-j)C_{j+1}^{\varepsilon}f_{j+1}^N,\quad j \in [N],
\end{equation}
where
$$C_{j+1}^{\varepsilon}=\sum_{i=1}^{j}C_{i,j+1}^{\varepsilon},
$$
with
$$(C_{i,j+1}^{\varepsilon}f_{j+1}^N)(X_j,V_j)=\int_{\mathbb{R}^{2d}}\nabla_{v_i}\cdot\left(f_{j+1}^N\psi\left(\frac{|x_i-x_{j+1}|}{\varepsilon}\right)(v_i-v_{j+1})\right)\mathrm{d}x_{j+1}\mathrm{d}v_{j+1}.$$
Taking $j=1$ in \eqref{BBGKY}, and since $T_1^{\varepsilon}=0$, we get
\begin{equation}\label{j1}
	\left(\partial_t+v_1\cdot\nabla_{x_1}\right)f_1^N=\kappa(N-1)C_{2}^{\varepsilon}f_{2}^N,
\end{equation}
where
$$(C_2^{\varepsilon}f_2^N)(x_1,v_1)=\int_{\mathbb{R}^{2d}}\nabla_{v_1}\cdot\left(f_2^N\psi\left(\frac{|x_1-x_2|}{\varepsilon}\right)(v_1-v_2)\right)\mathrm{d}x_2\mathrm{d}v_2.$$
After the change of variables $x_2=x_1+\varepsilon r$, we get
\begin{align}
\begin{aligned} \label{change}
& \kappa(N-1)(C_{2}^{\varepsilon}f_{2}^N)(x_1,v_1) \\
& \hspace{1cm} =\kappa(N-1)\varepsilon^d\int_{\mathbb{R}^{2d}}\nabla_{v_1}\cdot\left(\psi(|r|)(v_1-v_2)f_2^N(t,x_1,x_1+\varepsilon r,v_1,v_2)\right)\mathrm{d}r\mathrm{d}v_2.
\end{aligned}
\end{align}
Then if we assume that $f_1^N(t)\rightarrow f(t)$, $f_j^N(t)\rightarrow f(t)^{\otimes j}$ as $\varepsilon\rightarrow 0$ and $f_j^N$ has enough regularity, then by passing $\varepsilon\rightarrow 0$ in \eqref{change}, we see that the right hand side of \eqref{j1} will converge to $\displaystyle\gamma\nabla_v\cdot\left(E[f]f\right)(x_1,v_1)$, where
$$\gamma=\kappa\int_{\mathbb{R}^d}\psi(|r|)\mathrm{d}r.$$
Letting $\varepsilon\rightarrow 0$ in \eqref{j1}, we see that the equation solved by $f$ is exactly \eqref{Eq}.\par

\end{appendix}

\bibliographystyle{abbrv}
\bibliography{reference}

@article{HKLN07, 
title={Asymptotic completeness for relativistic kinetic equations with short-range interaction forces}, 
author={Ha, Seung-Yeal and Kim, Yong Duck and Lee, Ho and Noh, Se Eun}, 
journal={Methods and Applications of Analysis}, 
volume={14}, 
number={3}, 
pages={251–-262},
year={2007}
}

@article{CH11,
  title={Asymptotic behavior of the nonlinear {V}lasov equation with a self-consistent force},
  author={Choi, Sun-Ho and Ha, Seung-Yeal},
  journal={SIAM journal on mathematical analysis},
  volume={43},
  number={5},
  pages={2050--2077},
  year={2011},
}

@article{BCP97,
  title={A kinetic equation for granular media},
  author={Benedetto, Dario and Caglioti, Emanuele and Pulvirenti, Mario},
  journal={ESAIM: Mathematical Modelling and Numerical Analysis},
  volume={31},
  number={5},
  pages={615--641},
  year={1997},
}

@article{BCC+08,
  author  = {Ballerini, Michele and Cabibbo, Nicol{\`o} and Candelier, Raphael and Cavagna, Andrea and Cisbani, E. and Giardina, Irene and Lecomte, V. and Orlandi, A. and Parisi, Giorgio and Procaccini, Andrea and Viale, M. and Zdravkovi{\'c}, V. and others},
  title   = {Interaction ruling animal collective behavior depends on topological rather than metric distance: Evidence from a field study},
  journal = {Proceedings of the National Academy of Sciences},
  volume  = {105},
  number  = {4},
  pages   = {1232--1237},
  year    = {2008}
}

@article{TKI+13,
  author  = {Tunstr{\o}m, K. and Katz, Y. and Ioannou, C. C. and Huepe, C. and Lutz, M. J. and Couzin, I. D.},
  title   = {Collective states, multistability and transitional behavior in schooling fish},
  journal = {PLOS Computational Biology},
  volume  = {9},
  number  = {2},
  pages   = {e1002915},
  year    = {2013},
}

@article{Buc68,
author = {John Buck},
 journal = {The Quarterly Review of Biology},
 number = {3},
 pages = {265--289},
 publisher = {The University of Chicago Press},
 title = {Synchronous Rhythmic Flashing of Fireflies. II},
 volume = {63},
 year = {1988}
}

@InProceedings{Kur05,
  author    = {Kuramoto, Yoshiki},
  title     = {Self-entrainment of a population of coupled non-linear oscillators},
  booktitle = {International Symposium on Mathematical Problems in Theoretical Physics},
  series    = {Lecture Notes in Physics},
  volume    = {39},
  pages     = {420--422},
  publisher = {Springer},
  year      = {1975}
}

@book{Kur84,
  author    = {Kuramoto, Yoshiki},
  title     = {Chemical Oscillations, Waves, and Turbulence},
  series    = {Springer Series in Synergetics},
  publisher = {Springer},
  year      = {1984},
  pages = {158}
}

@article{VCB+95,
  author  = {Vicsek, Tam{\'a}s and Czir{\'o}k, Andr{\'a}s and Ben-Jacob, Eshel and Cohen, Inon and Shochet, Ofer},
  title   = {Novel Type of Phase Transition in a System of Self-Driven Particles},
  journal = {Physical Review Letters},
  volume  = {75},
  number  = {6},
  pages   = {1226--1229},
  year    = {1995}
}

@article{VZ12,
  author  = {Vicsek, Tam{\'a}s and Zafeiris, Anna},
  title   = {Collective motion},
  journal = {Physics Reports},
  volume  = {517},
  number  = {3},
  pages   = {71--140},
  year    = {2012}
}

@article{CS07,
  author  = {Cucker, Felipe and Smale, Steve},
  title   = {Emergent Behavior in Flocks},
  journal = {IEEE Transactions on Automatic Control},
  volume  = {52},
  number  = {5},
  pages   = {852--862},
  year    = {2007}
}

@article{RLX15,
author = {Ru, Lining and Li, Zhuchun and Xue, Xiaoping},
title = {Cucker-Smale Flocking with Randomly Failed Interactions},
journal = {Journal of the Franklin Institute},
volume = {352},
number = {3},
pages = {1099-1118},
year = {2015}
}

@article{HM19,
  author  = {Yuehua He and Xiaowu Mu},
  title   = {Cucker--{S}male flocking subject to random failure on general digraphs},
  journal = {Automatica},
  volume  = {106},
  pages   = {54--60},
  year    = {2019}
}

@article{MT11,
  author  = {Motsch, S{\'e}bastien and Tadmor, Eitan},
  title   = {A New Model for Self-organized Dynamics and Its Flocking Behavior},
  journal = {Journal of Statistical Physics},
  volume  = {144},
  number  = {5},
  pages   = {923--947},
  year    = {2011}
}

@article{AH10,
  author  = {Ahn, Shin Mi and Ha, Seung-Yeal},
  title   = {Stochastic flocking dynamics of the {C}ucker--{S}male model with multiplicative white noises},
  journal = {Journal of Mathematical Physics},
  volume  = {51},
  number  = {10},
  pages   = {17},
  year    = {2010}
}

@article{HT08,
  author  = {Ha, Seung-Yeal and Tadmor, Eitan},
  title   = {From particle to kinetic and hydrodynamic descriptions of flocking},
  journal = {Kinetic and Related Models},
  volume  = {1},
  number  = {3},
  pages   = {415--435},
  year    = {2008}
}

@article{HL09,
  author  = {Ha, Seung-Yeal and Liu, Jian-Guo},
  title   = {A simple proof of the {C}ucker--{S}male flocking dynamics and mean-field limit},
  journal = {Communications in Mathematical Sciences},
  volume  = {7},
  number  = {2},
  pages   = {297--325},
  year    = {2009}
}

@article{HKZ18,
  author  = {Ha, Seung-Yeal and Kim, Jeongho and Zhang, Xiongtao},
  title   = {Uniform stability of the {C}ucker--{S}male model and its application to the mean-field limit},
  journal = {Kinetic and Related Models},
  volume  = {11},
  number  = {5},
  pages   = {1157--1181},
  year    = {2018}
}

@incollection{JabinWangActiveParticles2017,
	author    = {Jabin, Pierre-Emmanuel and Wang, Zhenfu},
	title     = {Mean Field Limit for Stochastic Particle Systems},
	booktitle = {Active Particles, Volume 1: Advances in Theory, Models, and Applications},
	editor    = {Bellomo, Nicola and Degond, Pierre and Tadmor, Eitan},
	series    = {Modeling and Simulation in Science, Engineering and Technology},
	publisher = {Birkh{\"a}user Basel},
	year      = {2017},
	pages     = {379--402},
	doi       = {10.1007/978-3-319-49996-3_10}
}

@incollection{SznitmanSaintFlour1991,
	author    = {Sznitman, Alain-Sol},
	title     = {Topics in Propagation of Chaos},
	booktitle = {Ecole d'Et{\'e} de Probabilit{\'e}s de Saint-Flour XIX --- 1989},
	editor    = {Hennequin, Paul-Louis},
	series    = {Lecture Notes in Mathematics},
	volume    = {1464},
	publisher = {Springer},
	address   = {Berlin, Heidelberg},
	year      = {1991},
	pages     = {165--251},
	doi       = {10.1007/BFb0085169}
}

@article{SznitmanBurgers1986,
	author    = {Sznitman, Alain-Sol},
	title     = {A Propagation of Chaos Result for {B}urgers' Equation},
	journal   = {Probability Theory and Related Fields},
	volume    = {71},
	year      = {1986},
	pages     = {581--613},
	doi       = {10.1007/BF00699042}
}

@article{Oelschlager1985,
	author    = {Oelschl{\"a}ger, Klaus},
	title     = {A Law of Large Numbers for Moderately Interacting Diffusion Processes},
	journal   = {Zeitschrift f{\"u}r Wahrscheinlichkeitstheorie und Verwandte Gebiete},
	volume    = {69},
	year      = {1985},
	pages     = {279--322},
	doi       = {10.1007/BF02450284}
}

@article{CarrilloChoiHauraySalem2019,
	author  = {Carrillo, Jos{\'e} A. and Choi, Young-Pil and Hauray, Maxime and Salem, Samir},
	title   = {Mean-field limit for collective behavior models with sharp sensitivity regions},
	journal = {Journal of the European Mathematical Society},
	volume  = {21},
	number  = {1},
	pages   = {121--161},
	year    = {2019},
	doi     = {10.4171/JEMS/832}
}

@article{MuchaPeszek2018,
	author  = {Mucha, Piotr B. and Peszek, Jan},
	title   = {The Cucker--Smale Equation: Singular Communication Weight, Measure-Valued Solutions and Weak-Atomic Uniqueness},
	journal = {Archive for Rational Mechanics and Analysis},
	volume  = {227},
	number  = {1},
	pages   = {273--308},
	year    = {2018},
	doi     = {10.1007/s00205-017-1160-x}
}

@article{Dob79,
  author  = {Dobrushin, R. L.},
  title   = {Vlasov equations},
  journal = {Functional Analysis and Its Applications},
  volume  = {13},
  number  = {2},
  pages   = {115--123},
  year    = {1979}
}

@article{PS17,
  title        = {The {Boltzmann--Grad} limit of a hard sphere system: analysis of the correlation error},
  author       = {Pulvirenti, Mario and Simonella, Sergio},
  journal      = {Inventiones Mathematicae},
  volume       = {207},
  number       = {3},
  pages        = {1135--1237},
  year         = {2017},
}

@article{BPS13,
  title={From particle systems to the {L}andau equation: a consistency result},
  author={Boblylev, AV and Pulvirenti, Mario and Saffirio, Chiara},
  journal={Communications in Mathematical Physics},
  volume={319},
  pages={683--702},
  year={2013},
  publisher={Springer}
}

@article{HLL09,
author = {Seung-Yeal Ha and Kiseop Lee and Doron Levy},
title = {{Emergence of time-asymptotic flocking in a stochastic Cucker-Smale system}},
volume = {7},
journal = {Communications in Mathematical Sciences},
number = {2},
publisher = {International Press of Boston},
pages = {453 -- 469},
year = {2009},
}

\end{document}